\documentclass[10pt,a4paper]{article}

\usepackage{fullpage,amssymb,amsmath,amsfonts,hyperref,amsthm,paralist,graphicx,color,float, mathtools,tikz}
\usepackage{here}
\usepackage[normalem]{ulem}
\usepackage{centernot}
\usepackage{ifthen,xkeyval, tikz, calc, graphicx}
\usepackage{xcolor}
\usepackage{caption, subcaption}
\usepackage[textsize=scriptsize]{todonotes}
\usepackage{dsfont}
\usepackage{mathrsfs}
\usepackage{comment}


\newtheorem{theorem}{Theorem}[section]
\newtheorem{lemma}[theorem]{Lemma}
\newtheorem{prop}[theorem]{Proposition}

\newtheorem{observation}[theorem]{Observation}

\theoremstyle{definition}
\newtheorem{definition}[theorem]{Definition}

\newcommand{\al}[1]{\begin{align*}#1\end{align*}}
\newcommand{\aln}[1]{\begin{align}\begin{split}#1\end{split}\end{align}}
\newcommand{\algn}[1]{\begin{align}#1\end{align}}
\newcommand{\eqq}[1]{\begin{equation}#1\end{equation}}

\newcommand{\p}{\mathbb P}
\newcommand{\pp}{\mathbb P_p}
\newcommand{\E}{\mathbb E}
\newcommand{\R}{\mathbb R}
\newcommand{\Rd}{\mathbb R^d}

\newcommand{\Zd}{\mathbb Z^d}

\newcommand{\N}{\mathbb N}

\newcommand{\C}{\mathscr {C}}
\newcommand{\thinn}[1]{\langle #1 \rangle}
\newcommand{\piv}[1]{\textsf {Piv}(#1)}
\newcommand{\taup}{\tau_p}
\newcommand{\taupf}{\tau_p^\bullet}
\newcommand{\taupo}{\tau_p^\circ}

\newcommand{\conn}[3]{#1 \longleftrightarrow #2\textrm { in } #3}
\newcommand{\nconn}[3]{#1 \centernot\longleftrightarrow #2\textrm { in } #3}

\newcommand{\jeq}{J}

\newcommand{\throughconn}[1]{\xleftrightarrow{\ #1 \ }}

\newcommand{\lconn}[1]{\xleftrightarrow{\,\, (#1) \,\,}}
\newcommand{\lcyc}[1]{\xLeftrightarrow{\,\, (#1) \,\,}}

\newcommand{\ftau}{\widehat\tau_p}

\newcommand{\connf}{D}

\newcommand{\fconnf}{\widehat\connf}

\newcommand{\fspace}{(-\pi,\pi]^d}
\newcommand{\fpip}{\widehat\Pi_p}

\newcommand{\trip}{\triangle_p}

\newcommand{\tripf}{\triangle_p^{\bullet}}
\newcommand{\tripof}{\triangle_p^{\bullet\circ}}
\newcommand{\tripoff}{\triangle_p^{\bullet\bullet\circ}}
\newcommand{\dtri}{\triangleleft \hspace{-1 pt} \triangleright}
\newcommand{\dtrial}{\triangleleft \hspace{-3.2 pt} \triangleright}
\newcommand{\e}{\text{e}}
\newcommand{\orig}{\mathbf{0}}
\renewcommand{\i}{\text{i}}

\renewcommand{\O}{\mathcal O}
\newcommand{\omtwo}{\O(\Omega^{-2})}
\newcommand{\omone}{\O(\Omega^{-1})}
\newcommand{\gen}[1]{\langle\!\langle #1 \rangle\!\rangle}



\numberwithin{equation}{section}
\allowdisplaybreaks

\definecolor{darkorange}{RGB}{255,165,0}
\definecolor{altviolet}{RGB}{139,0,139}
\definecolor{turquoise}{RGB}{64,224,208}


\title{Expansion for the critical point of site percolation: \\the first three terms}
\author{Markus Heydenreich\footnote{Ludwig-Maximilians-Universit\"at M\"unchen, Mathematisches Institut, Theresienstr.\ 39, 80333 M\"unchen, Germany. \newline E-mail: m.heydenreich@lmu.de; kilianmatzke@web.de} 
		\and Kilian Matzke\footnotemark[1] }

\begin{document}
\maketitle

\begin{abstract}
We expand the critical point for site percolation on the $d$-dimensional hypercubic lattice in terms of inverse powers of $2d$, and we obtain the first three terms rigorously. This is achieved using the lace expansion. 
\end{abstract}

\noindent\emph{Mathematics Subject Classification (2010).} 60K35, 82B43. 

\noindent\emph{Keywords and phrases.} Site percolation, critical threshold, asymptotic series, lace expansion

\section{Introduction} \label{sec:introduction}

We study site percolation on the hypercubic lattice $\Zd$. To this end, we fix a parameter $p\in[0,1]$ and create a random subgraph of $\Zd$ as follows. Each site (or vertex) $x\in\Zd$, independently of all other sites, is declared \emph{occupied} with probability $p$ (and \emph{vacant} otherwise). A bond (edge) between two nearest-neighbor sites in $\Zd$ is an edge of the random subgraph if and only if the two sites are occupied. Denote by $\theta(p)$ the probability that there is a path starting at the origin $\orig\in\Zd$ and diverging to infinity that consists only of occupied vertices. This allows us to define the \emph{critical point} as 
	\eqq{ p_c := \inf\big\{p\in[0,1]: \theta(p)>0\big\}. \label{eq:intro:p_c_def}}
It is standard that $0<p_c<1$ in all dimensions $d\ge2$. In general, it is not possible to write down an explicit value for $p_c=p_c(d)$ (see Table \ref{table-pcnum} for numerical values), a notable exception is site percolation on the two-dimensional triangular lattice (when $p_c=1/2$). However, it is possible to derive an asymptotic expansion for $p_c(d)$ when $d\to\infty$. Indeed, it is known in the physics literature that
	\eqq{ p_c = \sigma^{-1} + \frac 32 \sigma^{-2} + \frac{15}{4} \sigma^{-3} + \frac{83}{4}\sigma^{-4} + \frac{6577}{48}\sigma^{-5}+ \frac{119077}{96}\sigma^{-6}+\cdots 
					\quad\text{for $\sigma={2d-1}\to\infty$.} \label{eq:intro:expansion-phys}}
The first four terms were found by Gaunt, Ruskin, and Sykes in 1976~\cite{GauRusSyk76} through exact enumeration, the last two terms have been obtained by Mertens and Moore~\cite{MerMoo18} by exploiting involved numerical methods. When writing this in powers of $\frac{1}{2d}$,~\eqref{eq:intro:expansion-phys} becomes
	\eqq{	p_c(d) = (2d)^{-1} + \frac 52 (2d)^{-2} + \frac{31}{4}(2d)^{-3} + \frac{75}{2} (2d)^{-4} + \frac{11977}{48}(2d)^{-5}+ \frac{209183}{96}(2d)^{-6}+\cdots. \label{eq:intro:expansion-phys2} }
In this paper, we extend the previously known first term by establishing the second and third term, including a rigorous bound on the error term.
\begin{theorem}[Expansion of $p_c$ in terms of $(2d)^{-1}$] \label{thm:expansion_of_p_c}
As $d \to \infty$,
	\[p_c(d) = (2d)^{-1} + \frac 52 (2d)^{-2} + \frac{31}{4} (2d)^{-3} + \mathcal O\left( (2d)^{-4} \right). \]
\end{theorem}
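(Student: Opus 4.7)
The strategy is to apply the lace expansion for site percolation, in the spirit of the Hara--Slade approach developed for bond percolation. The starting point is a lace-expansion identity for the two-point function $\taup(x):=\pp(\conn{\orig}{x}{\Zd})$. After summing over $x$ (equivalently, taking the Fourier transform at $k=0$) and exploiting that the susceptibility $\widehat\taup(0)$ diverges at $p_c$, one arrives at an implicit characterising equation for $p_c$ of the schematic form
\[ 1 \;=\; p_c\cdot 2d \;+\; \widehat{\Pi}_{p_c}(0)\,, \]
where $\widehat{\Pi}_{p_c}(0)$ is an alternating sum over lace-connection diagrams. Solving this equation asymptotically in $(2d)^{-1}$ is what delivers the expansion of $p_c$.

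The next step is to expand $\widehat{\Pi}_{p_c}(0)$ to three orders in $(2d)^{-1}$. Each diagram contributing to $\widehat{\Pi}_{p_c}(0)$ can be bounded by a product of two-point functions; invoking the triangle condition, which holds in sufficiently high dimensions, shows that every such diagram is dominated by an appropriate power of $\trip=\mathcal{O}((2d)^{-1})$. This truncation identifies a \emph{finite} list of diagrams whose explicit contributions are required at orders $(2d)^{-1}$, $(2d)^{-2}$, and $(2d)^{-3}$, while all remaining diagrams are absorbed into an $\mathcal{O}((2d)^{-4})$ remainder.

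For the leading diagrams, I would replace $\taup$ by the simple random walk Green's function at an appropriate step distribution, with the error controlled by a further infrared-bound estimate comparing percolation to random walk. The resulting lattice sums, of Ornstein--Zernike type, are classical random walk quantities whose $(2d)^{-1}$-expansions are explicitly computable to arbitrary order (returns, triangle constants, and analogous moments of simple random walk on $\Zd$). Inserting these expansions into the characterising equation and then inverting perturbatively for $p_c$ yields the claimed coefficients $\tfrac{5}{2}$ at order $(2d)^{-2}$ and $\tfrac{31}{4}$ at order $(2d)^{-3}$.

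The principal obstacle lies in the site-percolation-specific bookkeeping. Because neighbouring bonds share endpoint status variables, the inclusion--exclusion mechanism defining $\Pi_p$ is strictly more involved than in the bond case, and it produces additional diagrammatic terms that must be \emph{isolated and evaluated} to the required precision, not merely bounded. At the same time, the diagrammatic truncation has to be quantitatively tight, so that the tail contributes only $\mathcal{O}((2d)^{-4})$ uniformly in $d$; this relies on a sharp high-dimensional infrared bound with explicit constants. Balancing these two demands --- carrying enough terms to extract the third coefficient while keeping the remainder provably of order $(2d)^{-4}$ --- is the hardest part of the argument.
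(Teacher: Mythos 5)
Your high-level framework is correct: you use the lace expansion to derive a characterising equation for $p_c$, truncate $\widehat{\Pi}_{p_c}(0)$ to a finite sum of coefficients $\widehat{\Pi}^{(n)}_{p_c}(0)$ using triangle-type bounds, and expand the surviving coefficients. Two issues, one minor and one substantive.

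The minor issue: the paper's characterising equation is $1 = p_c\big(\Omega + \widehat{\Pi}_{p_c}(0)\big)$, i.e.\ with $p_c$ multiplying $\widehat{\Pi}_{p_c}(0)$. Your schematic form drops that factor. Since $\widehat{\Pi}_{p_c}(0) = \mathcal{O}(1)$ while $p_c\widehat{\Pi}_{p_c}(0) = \mathcal{O}(\Omega^{-1})$, the two versions disagree already at leading order, so this is not a cosmetic slip but something that would corrupt the bookkeeping when you invert perturbatively.

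The substantive gap is in how you propose to compute the coefficients $\widehat{\Pi}^{(n)}_{p_c}(0)$. You suggest writing $\widehat{\Pi}^{(n)}$ in terms of diagrams built from $\taup$, replacing $\taup$ by a simple random walk Green's function, and then invoking classical $(2d)^{-1}$-expansions of random walk quantities. This does not work, for two reasons. First, the diagrammatic representations (Lemma~\ref{lem:prelim:diagrammatic_bound_Pi1}, Lemma~\ref{lem:prelim:diagrammatic_bound_Pi2}) are \emph{upper bounds}, not identities: the events $E'(u,v;A)$ involve non-pivotality conditions and ``through''-connections that are bounded, not equal to, products of two-point functions. Bounds are sufficient to discard tail terms, but cannot deliver the \emph{exact} coefficients $\tfrac{5}{2}\Omega^{-1}$ and $4\Omega^{-1}$ that appear in Lemma~\ref{lem:exp:Pis_expansion}. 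Second, the leading contributions are short-range combinatorial objects — $4$-cycles, $6$-cycles, which neighbour of $\orig$ the pivotal vertex sits at, whether specific sites like $v_1,v_2$ are occupied or vacant — and these must be enumerated directly. What the paper actually does in Section~\ref{sec:exp:bounds_on_Pis} is exactly this direct enumeration: case distinctions on $|u|$, $|x|$, $|v|$, counting configurations one by one, with the $l$-step connection machinery of Section~\ref{sec:exp:bounds_l_connection} used to show that everything beyond a bounded window is $\mathcal{O}(\Omega^{-2})$. No random-walk Green's function enters anywhere; the only role of the diagrammatic/triangle bounds is to prune the sum, not to compute it. If you want to rescue a Green's-function scheme you would need, at minimum, an exact inclusion--exclusion identity (not just a bound) relating $\Pi^{(n)}$ to convolutions of $\taup$, and then a bootstrap expansion of $\taup$ itself through the Ornstein--Zernike equation; that is a genuinely different and unproved step, not a routine substitution.
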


The key technical tool for our approach is the lace expansion for site percolation. It was established in a recent paper~\cite{HeyMat20}, which itself draws its inspiration from Hara and Slade's seminal paper~\cite{HarSla90}. 
The lace expansion provides an expression for $p_c$ in terms of \emph{lace-expansion coefficients}, which are defined in Definition~\ref{def:prelim:lace_expansion_coefficients}. Moreover, it provides good control over these coefficients, and the results of \cite{HeyMat20} identify already the leading order term in~\eqref{eq:intro:expansion-phys2}. 

\paragraph{Comparison with bond percolation.}
It is most instructive to compare the critical thresholds for site and bond percolation. 
While the critical behaviour of bond- and site percolation is comparable, the actual values of the critical thresholds differ, as illustrated by the following table: 
\begin{table}[h]\centering\small
\begin{tabular}{l|c|c|c|c|c|c|c|c|c|c|c}
{dim} & 2 & 3 & 4 & 5 & 6 & 7 & 8 & 9 & 10 & 11 & 12 \\ \hline
$p_c^{\text{site}}$ & 0.5927 & 0.3116 & 0.1969 & 0.1408 & 0.1090 & 0.0890 & 0.0752 & 0.0652 & 0.0576 & 0.0516 & 0.0467   \\ \hline 
$p_c^{\text{bond}}$ & $0.5 ^\ast$ & 0.2488 & 0.1601 & 0.1182 & 0.0942 & 0.0786 & 0.0677 & 0.0595 & 0.0531 & 0.0479 &  0.0437
\end{tabular}
\caption{Critical values for percolation on $\Zd$, rounded to multiples of $10^{-4}$. The only rigorously obtained value is for bond percolation in dimension 2 (marked with ${}^\ast$). All other values are obtained through numerical simulation; the values for $d\ge4$ are reported from Grassberger~\cite{Gra03} and Mertens and Moore~\cite{MerMoo18}.}
\label{table-pcnum}
\end{table}

Grimmett and Stacey~\cite{GriSta98} prove that $p_c^{\text{site}}>p_c^{\text{bond}}$ on $\Zd$ for all dimensions $d\ge2$. This difference must be reflected in the asymptotic expansion for $p_c$. Indeed, Hara and Slade~\cite{HarSla95} and van der Hofstad and Slade~\cite{HofSla06} rigorously obtain a series expansion for \emph{bond} percolation as 
\begin{equation}
	p_c^{\text{bond}}(d) = (2d)^{-1} + (2d)^{-2} + \frac{7}{2}(2d)^{-3} + \mathcal O\left((2d)^{-4}\right), \label{eq:intro:pc_bond_expansion}
\end{equation}
which indeed differs from the expansion of $p_c^{\text{site}}$ in Theorem \ref{thm:expansion_of_p_c}. 
Again, more precise estimates are known by non-rigorous methods~\cite{GauRus78,MerMoo18}: 
	\eqq{ p_c^{\text{bond}} =  \sigma^{-1} + \frac 52 \sigma^{-3} + \frac{15}{2}\sigma^{-4} + 57\sigma^{-5}+ \frac{4855}{12}\sigma^{-6}+\cdots \label{eq:expansion-bond-phys} }
for $\sigma={2d-1}$, which is equivalent to 
	\[ p_c^{\text{bond}}(d) = (2d)^{-1} + (2d)^{-2} + \frac{7}{2}(2d)^{-3} + 16 (2d)^{-4} + 103 (2d)^{-5} + \frac{9487}{12}(2d)^{-6} + \cdots .\]
We remark that~\eqref{eq:intro:pc_bond_expansion} was proved in~\cite{HofSla06} also for the $d$-dimensional cube. More recently, an asymptotic expansion was also proven for the Hamming graph~\cite{FedHofHolHul20}.

\paragraph{Borel summability of the coefficients.} 
Theorem \ref{thm:expansion_of_p_c} establishes an expansion to the third order, but it is plausible that even an expansion to \emph{all} orders for site percolation exist: writing $s=\frac1{2d}$ and $\bar p_c(s)=p_c(d)$, this means that there is a real sequence $(\alpha_n)_{n\in\N}$ such that for any $M\in\N$, 
\begin{equation}\label{eqPcExpansion}
	\bar p_c(s)=\sum_{n=1}^{M-1}\alpha_n\,s^n+\mathcal O(s^M).
\end{equation} 
The corresponding statement for bond percolation was proved by Hofstad and Slade~\cite{HofSla05}. 
However, it is expected that the radius of convergence of the series $\sum\alpha_ns^n$ is zero (even though rigorous evidence is lacking), and this non-convergence is valid in greater generality for series expansions of critical thresholds of various statistical mechanical models. The reason is that the sequence of absolute values $|\alpha_1|,|\alpha_2|,|\alpha_3|,\dots$ grows very rapidly (with sign changes for higher $n$), and that therefore it is not possible to compute $\bar p_c(s)$ from the sequence $(\alpha_n)$. 

Instead, we believe that the coefficients $(\alpha_n)$ are \emph{Borel summable}: suppose $\bar p_c(s)$ has an analytic extension to the complex disk $D=\{z\in\mathbb C\colon {\rm Re}(z^{-1})>1\}$, and suppose further that there is $C>0$ such that for all $s\in D$ and all $M$, we have 
\begin{equation}\label{eq-BorelTypeBound}
	\left|\bar p_c(s)-\sum_{n=1}^{M-1}\alpha_n\,s^n\right|\le C^M\,s^M\,M!,
\end{equation}
then Sokal~\cite{Sok80} proves that the \emph{Borel transform} $B(t)=\sum_{n=1}^\infty{\alpha_n t^n}/{n!}$ exists, and $\bar p_c(s)$ equals the \emph{Borel sum} 
\begin{equation}
	\bar p_c(s)=\frac1s\int_0^\infty e^{-t/s}B(t)\,dt.
\end{equation}
It is, however, unclear how an analytic extension of $\bar p_c(s)$ for site percolation could be obtained. 

A rare example for which we know Borel summability is the exact solution $K_c(d)$ of the spherical model. Gerber and Fisher~\cite{GerFis74} prove that there is an expansion of $K_c(d)$ in powers of $1/d$, that the radius of convergence is zero, but that we may interpret the expansion as a Borel sum as described above. 
They also prove that the signs of the coefficients of $K_n$ oscillate: the first 12 terms are positive, the next 8 are negative, the next 9 are positive, and so on. 
For the well-known model of self-avoiding walk, Graham~\cite{Gra10} proves bounds for the connective constant as in \eqref{eq-BorelTypeBound}.

\subsection{Strategy of proof, outline of the paper}
Theorem~\ref{thm:expansion_of_p_c} heavily builds upon the results obtained in~\cite{HeyMat20}. We use Section~\ref{sec:prelim} to collect the necessary notation and results from~\cite{HeyMat20} in order to prove our main result. At the heart of these results is an identity for $\taup$. From this, we almost immediately get an identity for $p_c$ in terms of so-called \emph{lace-expansion coefficients} (see Definition~\ref{def:prelim:lace_expansion_coefficients}). It will be clear that sufficient control over the coefficients will result in the expansion of Theorem~\ref{thm:expansion_of_p_c}. In fact, the results from~\cite{HeyMat20} immediately give the first term of~\eqref{eq:intro:expansion-phys2}.

For the other terms in Theorem~\ref{thm:expansion_of_p_c}, however, we require even better control of these coefficients, which is provided by Lemma~\ref{lem:exp:Pis_expansion}. Section~\ref{sec:expansion_critical_point} proves Theorem~\ref{thm:expansion_of_p_c} assuming Lemma~\ref{lem:exp:Pis_expansion}. The latter is at the heart of this paper and is proved in Section~\ref{sec:exp:bounds_on_Pis}. As a preparation for the proof, Section~\ref{sec:exp:bounds_l_connection} introduces some new notation on connection events and proves bounds on them. Those bounds are in essence an extension of the bounds presented in Section~\ref{sec:prelim}.

\section{Preliminaries} \label{sec:prelim}
\subsection{Site percolation: Model and basic definitions} \label{sec:prelim_intro}

We introduce the model more formally. Given $p \in [0,1]$, we can choose our probability space to be $(\{0,1\}^{\Zd}, \mathcal F, \pp)$, where the $\sigma$-algebra $\mathcal F$ is generated by the cylinder sets, and $\pp = \bigotimes_{x\in\Zd} \text{Ber}(p)$. We call $\omega\in\{0,1\}^{\Zd}$ a configuration and say that a site $x\in\Zd$ is \emph{open} or \emph{occupied} in $\omega$ if $\omega(x)=1$. If $\omega(x)=0$, we say that the site $x$ is \emph{closed} or \emph{vacant}. We often identify $\omega$ with the set $\{x\in\Zd: \omega(x)=1\}$.

For $k\in\N$ and a configuration $\omega$, we call $(v_0,v_1,\dots,v_k)\in(\Zd)^{k+1}$ an \emph{occupied path} of length $k$ from $v_0$ to $v_k$ if $| v_i-v_{i-1}| = 1$ for all $1 \leq i \leq k$, and $v_i\in\omega$ for $1 \leq i \leq k-1$. 
Here, and throughout the paper, we write $|x|= \sum_{i=1}^d|x_i|$ for $x\in\Rd$ (which is equal to the graph distance in $\Zd$). 
For two points $x \neq y\in\Zd$ we write $\{x \longleftrightarrow y\}$ (and say that $x$ is \emph{connected} to $y$) if there exists an occupied path from $x$ to $y$ of arbitrary length; mind that the this event is irrespective of the occupation status of $x$ and $y$. 
We set $\{x \longleftrightarrow x\}=\varnothing$, that is, $x$ is \emph{not} connected to itself. Moreover, $|x-y|=1$ implies $\{x \longleftrightarrow y\}=\{0,1\}^{\Zd}$ (neighbors are always connected).

We define the \emph{cluster} of $x$ to be $\C(x) = \{x\} \cup \{y \in \omega: x \longleftrightarrow y\}$. Note that apart form $x$ itself, points in $\C(x)$ need to be occupied.

The \emph{two-point function} $\tau_p\colon \Zd \to [0,1]$ is defined as $\tau_p(x):=\pp(\orig \longleftrightarrow x)$, where $\orig$ denotes the origin in $\Zd$. The \emph{percolation probability} is defined as $\theta(p) = \pp(\orig \longleftrightarrow \infty) = \pp( | \C(\orig)| = \infty)$. We note that $p \mapsto \theta(p)$ is increasing and define the \emph{critical point} for $\theta$ as in~\eqref{eq:intro:p_c_def}. The critical point $p_c$ depends on the underlying graph.

For an absolutely summable function $f\colon\Zd \to \R$, the discrete Fourier transform is defined as $\widehat f\colon\fspace \to \mathbb C$, where
	\[ \widehat f(k) = \sum_{x \in \Zd} \e^{\i k \cdot x} f(x)\]
and $k\cdot x = \sum_{j=1}^{d} k_j x_j$ denotes the scalar product.

\subsection{The lace expansion in high dimension}
We use this section to state the definitions and results from~\cite{HeyMat20} needed in the proof of Theorem~\ref{thm:expansion_of_p_c}. We note that the below definition uses the notion of disjoint occurrence (denoted $`\circ$') related to the BK inequality (which we will use at a later stage as well). For details on both, see e.g.~\cite[Chapter 2]{BolRio06} or \cite[Section 2.3]{Gri99}.

\begin{definition}[Connection events, modified clusters] \label{def:prelim:connection_events}
Let $x,u\in\Zd$ and $A \subseteq \Zd$.
\begin{enumerate}
\item We set $\Omega := 2d$.
\item We define $\jeq(x) := \mathds 1_{\{|x|=1\}} = \mathds 1_{\{0\sim x\}}$ and $\connf := J/\Omega$.
\item Let $\{\conn{u}{x}{A}\}$ be the event that there is a path from $u$ to $x$, all of whose internal vertices are elements of $\omega\cap A$.
\item We define $\{u \Longleftrightarrow x\} := \{u \longleftrightarrow x\} \circ \{u \longleftrightarrow x\} $ and say that $u$ and $x$ are \emph{doubly connected}.
\item We define the modified cluster of $x$ with a designated vertex $u$ as
		\[\widetilde\C^{u}(x) := \{x\} \cup \{y \in \omega \setminus\{u\} : x \longleftrightarrow y \text{ in } \Zd \setminus\{u\}  \} . \]
\item Let $\langle A \rangle := A \cup \{y \in \Zd: \exists x \in A: |x-y| = 1\}$.
\end{enumerate}
\end{definition}
Note that we introduce $\Omega=2d$. For better readability, we stick to using $\Omega$ for the remainder of the paper. We also address the Landau notation $f(\Omega) \leq \O(g(\Omega))$ that will appear frequently throughout the paper. It is always to be understood in the sense that there exists some $d_0$ and a constant $C(d_0)$, such that $f(\Omega) \leq C g(\Omega)$ for all $\Omega \geq d_0$. The constant $C$ may depend on other appearing parameters.

We remark that $\{\conn{x}{y}{\Zd}\} = \{x \longleftrightarrow y\} = \{\conn{x}{y}{\omega}\}$ and that $\{u \Longleftrightarrow x\} = \{0,1\}^{\Zd}$ for $|u-x| = 1$. Similarly, $\{u \Longleftrightarrow x\} = \varnothing$ for $u=x$. We state two elementary observations made in~\cite{HeyMat20} involving $\jeq$ that will be important later on.
\begin{observation}[{{Convolutions of $\jeq$, \cite[Observation 4.4]{HeyMat20}}}]\label{obs:prelim:J_convolutions}
Let $m\in\N$ and $x\in\Zd$ with $m \geq |x|$. Then there is a constant $c=c(m,x)$ with $c \leq m!$ such that
	\[ \jeq^{\ast m}(x) = c(m) \mathds 1_{\{m-|x| \text{ is even}\}} \Omega^{(m-|x|)/2 }.  \]
\end{observation}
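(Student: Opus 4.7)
The quantity $\jeq^{\ast m}(x)$ counts length-$m$ nearest-neighbor walks in $\Zd$ from $\orig$ to $x$. Each step changes $|x|$ by exactly $\pm 1$, so the endpoint has $\ell^1$-norm at most $m$ and of the same parity as $m$; thus $\jeq^{\ast m}(x) = 0$ unless $m \ge |x|$ and $m-|x|$ is even, which accounts for the indicator in the statement. In the non-vanishing case, set $k := (m-|x|)/2 \ge 0$ and define $c(m,x) := \jeq^{\ast m}(x)/\Omega^k$; then the displayed equality is automatic, and the actual content of the observation is the bound $c(m,x) \le m!$.

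I would establish this bound by induction on $m$. The base cases $m=0,1$ are immediate since $\jeq^{\ast 0}(x) = \mathds 1_{\{x=\orig\}}$ and $\jeq^{\ast 1}(x) = \jeq(x)$. For the inductive step, use the one-step convolution identity
\al{\jeq^{\ast m}(x) = \sum_{y\colon |y|=1} \jeq^{\ast (m-1)}(x-y)}
and split the $\Omega$ neighbors $y$ according to whether $|x-y| = |x|-1$ or $|x-y| = |x|+1$. With $n := |\{i \colon x_i\ne 0\}|$, exactly $n$ neighbors fall in the first category and each contributes $c(m-1, x-y)\,\Omega^k$ by the inductive hypothesis (the sub-problem has the same $k$); the remaining $\Omega - n$ neighbors fall in the second category and, when $k\ge1$, each contributes $c(m-1, x-y)\,\Omega^{k-1}$. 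When $k = 0$, this second sum vanishes outright, because $|x-y| = m+1$ then exceeds the length $m-1$ of the inner walk, so $\jeq^{\ast(m-1)}(x-y)=0$.

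Dividing through by $\Omega^k$ and applying the inductive bound $c(m-1,\cdot) \le (m-1)!$ yields
\al{c(m,x) \;\le\; n\,(m-1)! + \Omega^{-1}(\Omega - n)\,(m-1)! \;\le\; (n+1)(m-1)!}
when $k\ge1$, and $c(m,x) \le n\,(m-1)!$ when $k = 0$. Since $k\ge1$ forces $|x|\le m-2$ and hence $n\le m-2$, while $k=0$ merely allows $n\le|x|=m$, in both regimes we obtain $c(m,x)\le m\,(m-1)! = m!$, closing the induction. The one piece of bookkeeping to watch is precisely the $k=0$ boundary: without using that the second sum is empty there, the estimate $(n+1)(m-1)!$ could reach $(m+1)(m-1)!$ and fail, so the separation of the two regimes is essential.
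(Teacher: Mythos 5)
Your proof is correct, and the inductive decomposition of the one-step convolution according to whether a neighbor $y$ decreases or increases $|x|$ is the natural (and standard) route; note that the present paper only cites this observation from~\cite{HeyMat19a} without reproducing a proof. Two small points worth being explicit about: first, $c(m,x)$ as you define it is really a bounded function of $\Omega$ (a polynomial in $\Omega^{-1}$), so the content of the observation is precisely the uniform bound $c\le m!$ that you establish; second, your careful separation of the $k=0$ boundary case is indeed essential, since there the crude estimate $(n+1)(m-1)!$ would overshoot---and you correctly observe that the increasing-distance sum vanishes because a walk of length $m-1$ cannot reach a point at $\ell^1$-distance $m+1$, while $n\le|x|=m$ closes the bound.
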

\begin{observation}[{{Elementary bound on $\taup^{\ast n}$, \cite[Observation 4.5]{HeyMat20}}}] \label{obs:prelim:tau_J_extraction}
Let $m,n \in \N, p\in[0,1]$ and $x \in\Zd$. Then there is a constant $c=c(m,n)$ such that
	\[ \taup^{\ast n}(x) \leq c \sum_{l=0}^{m-1} p^{l} \jeq^{\ast l+n}(x) + c \sum_{j=1}^{n} p^{m+j-n}(\jeq^{\ast m} \ast \taup^{\ast j})(x). \]
\end{observation}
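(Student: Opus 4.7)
The starting point is the one-step recursion
\[\tau_p(x) \leq J(x) + p\,(J \ast \tau_p)(x),\]
which one obtains by distinguishing the cases $|x|=1$ (so $\tau_p(x)=1=J(x)$) and $|x|\geq 2$: in the latter, any connection $0\leftrightarrow x$ forces an occupied neighbor $y$ of $0$ with $y\leftrightarrow x$, and, crucially, $\{y\in\omega\}$ is independent of $\{y\leftrightarrow x\}$ because the latter event depends only on the status of vertices strictly between its endpoints. Iterating $m$ times by expanding the trailing $\tau_p$ on the right gives the $n=1$ case of the claim with $c=1$:
\[\tau_p(x) \leq \sum_{l=0}^{m-1} p^l J^{\ast(l+1)}(x) + p^m (J^{\ast m} \ast \tau_p)(x).\]

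The general case I would treat by induction on $n$. Writing $\tau_p^{\ast n} = \tau_p \ast \tau_p^{\ast(n-1)}$ and iterating the one-step bound $m$ times on the leading factor yields
\[\tau_p^{\ast n} \leq \sum_{k=0}^{m-1} p^k\, J^{\ast(k+1)} \ast \tau_p^{\ast(n-1)} + p^m\, J^{\ast m} \ast \tau_p^{\ast n},\]
the second summand of which already matches the $j=n$ term of the claim. For each $k$ in the first sum I would then re-apply the one-step bound on a factor of $\tau_p^{\ast(n-1)}$, but iterate it exactly $m-k-1$ further times, so that the accumulated $J$-convolution grows from order $k+1$ up to exactly $m$. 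This produces, for each fixed $k$, both a tail contribution $p^{m-1} J^{\ast m} \ast \tau_p^{\ast(n-1)}$, matching the claim's $j=n-1$ slot, and reduced-level summands of the form $p^{L} J^{\ast(L+2)}\ast \tau_p^{\ast(n-2)}$ with $L\leq m-2$. Recursing on the latter in exactly the same way, each further level peels off one more tail of the required shape $p^{m+j-n}\, J^{\ast m} \ast \tau_p^{\ast j}$ for some $j\in\{1,\dots,n-1\}$, while after $n-1$ rounds all surviving fully-reduced terms populate the first sum $\sum_{l} p^l J^{\ast(l+n)}$ of the claim.

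The main obstacle I expect is the bookkeeping across the nested iterations: at each level one has to choose the number of additional one-step iterations precisely so that the total $J$-convolution order ends at exactly $m$ and the accumulated power of $p$ equals $m+j-n$, and one must verify that the combinatorial multiplicity from merging the inner summation index $l$ with the outer one $k$ stays controlled. A direct count shows the multiplicities grow only polynomially (of order $m$ per level, for a total of order $m^{n-1}$ after $n-1$ levels), so the final constant $c(m,n)$ depends only on $m$ and $n$, as required, and neither on $p$, $x$, nor $\Omega$.
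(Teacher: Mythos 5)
Your proof is correct, and the argument you give---the union-bound one-step recursion $\tau_p \leq J + pJ\ast\tau_p$ (valid since $\{y\in\omega\}$ is independent of $\{y\longleftrightarrow x\}$, the latter being determined by vertices other than $y$ and $x$), iterated on one factor of $\tau_p^{\ast n}$ at a time with the iteration count at each level chosen to push the tail $J$-convolution order to exactly $m$, together with the polynomial multiplicity count yielding $c(m,n)$---is precisely the expected standard route. Note that the present paper only cites this observation from \cite{HeyMat19a} without reproducing its proof, but your derivation matches the natural approach and the bookkeeping (tails produce exactly the second sum with $j=n,n-1,\dots,1$; fully-reduced terms land in the first sum with multiplicity bounded by a binomial in $m$ and $n$) is sound.
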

The following, more specific definitions are important to define the lace-expansion coefficients:
\begin{definition}[Extended connection events] \label{def:prelim:extended_connection_stuff} Let $v,u,x \in\Zd$ and $A \subseteq \Zd$.
\begin{enumerate}
\item Define
	\[ \{u \throughconn{A} x \} := \{u \longleftrightarrow x\} \cap \Big( \{\nconn{u}{x}{\Zd \setminus\thinn{A}}\} \cup  \{x \in \thinn{A}  \}  \Big). \]
In words, this is the event that $u$ is connected to $x$, but either any path from $u$ to $x$ has an interior vertex in $\thinn{A}$, or $x$ itself lies in $\thinn{A}$.
\item We introduce $\piv{u,x}$ as the set of pivotal points for $\{u \longleftrightarrow x\}$. That is, $v \in\piv{u,x}$ if the event $\{\conn{u}{x}{\omega \cup \{v\}}\}$ holds but $\{\conn{u}{x}{\omega\setminus\{v\}}\}$ does not.
\item Define the event
	\[ E'(v,u;A) := \{v \throughconn{A} u\} \cap \{ \nexists u' \in \piv{v,u}: v \throughconn{A} u'\} \]
\end{enumerate}
\end{definition}
We remark that $\{u \throughconn{\Zd} x\} = \{ u \longleftrightarrow x\}$. We can now define the lace-expansion coefficients. To this end, let $(\omega_i)_{i\in\N_0}$ be a sequence of independent site percolation configurations. For an event $E$ taking place on $\omega_i$, we highlight this by writing $E_i$. We also stress the dependence of random variables on the particular configuration they depend on. For example, we write $\C(u; \omega_i)$ to denote the cluster of $u$ in configuration $i$.
\begin{definition}[Lace-expansion coefficients] \label{def:prelim:lace_expansion_coefficients}
Let $n\in\N_0, x\in\Zd$, and $p \in [0,p_c]$. We define
	\al{ \Pi_p^{(0)}(x) &:= \pp(\orig \Longleftrightarrow x) - \jeq(x), \\
			\Pi_p^{(n)}(x) &:= p^n \sum_{u_0, \ldots, u_{n-1}} \pp \Big( \{\orig \Longleftrightarrow u_0\}_0 \cap \bigcap_{i=1}^{n} E'(u_{i-1},u_i; \C_{i-1})_i \Big), }
where $u_{-1}=\orig, u_n=x$ and $\C_{i} = \widetilde\C^{u_i}(u_{i-1};\omega_i)$. Let furthermore $\Pi_{p}(x) := \sum_{n=0}^{\infty} (-1)^n \Pi_p^{(n)}(x)$.
\end{definition}
It is proved in~\cite{HeyMat20} that the functions $(\Pi_p^{(n)}(x))_{n\in\N_0}$ are (absolutely) summable for every $x$ and that $\Pi_p$ is thus well defined. We remark that $E'(u_{i-1},u_i; \C_{i-1})_i$ takes place solely on $\omega_i$ only if $\C_{i-1}$ is regarded as a fixed set; otherwise it takes place on $\omega_{i-1}$ as well as $\omega_i$. Proposition~\ref{thm:prelim:convergence_of_LE} summarizes the main results of~\cite{HeyMat20} (namely, Theorem 1.1 and Proposition 4.2).

\begin{prop}[OZE, infra-red bound and bounds on the lace-expansion coefficients] \label{thm:prelim:convergence_of_LE} \ 
Let $p \in [0,p_c]$. Then there is $d_0 \geq 6$ such that, for all $d > d_0$, $\taup$ satisfies the Ornstein-Zernike equation
	\eqq{ \taup(x) = \jeq(x) + \Pi_p(x) + p\big((\jeq+\Pi_p)\ast\taup\big)(x). \label{eq:prelim:OZE}}
Secondly, there is a constant $C=C(d_0)$ such that
	\eqq{ p|\ftau(k)| \leq \frac{|\fconnf(k)| + C/d}{1-\fconnf(k)},  \label{eq:prelim:nfrared}}
where we take the right-hand side to be $\infty$ for $k=0$. Thirdly, $2dp \leq 1+C/d$, and lastly, for $n\in\N_0$,
	\eqq{ p\sum_{x\in\Zd} \Pi_{p}^{(n)}(x) \leq C (C/d)^{n \vee 1}.  \label{eq:prelim:Pi_(n)_bounds} }
\end{prop}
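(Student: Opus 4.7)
The plan is to tackle the four assertions in the order they appear, each of which rests on classical lace-expansion machinery adapted to the site setting of~\cite{HeyMat19a}. First, the Ornstein--Zernike equation~\eqref{eq:prelim:OZE} is the output of the lace expansion itself. I would derive it by iterating inclusion--exclusion on the two-point function $\taup(x)=\pp(\orig\longleftrightarrow x)$: split according to whether $\orig\Longleftrightarrow x$ directly (this produces $\jeq(x)+\Pi_p^{(0)}(x)$ after subtracting the nearest-neighbour contribution) or there is a first pivotal vertex $u$ for the connection. Conditioning on the modified cluster $\widetilde\C^{u}(\orig)$ and applying the factorization lemma isolates an independent connection from $u$ to $x$, which equals $\taup(x-u)$ up to a correction; re-expanding that correction by the same mechanism and keeping track of the sign from inclusion--exclusion yields the alternating series $\Pi_p(x)=\sum_{n\ge 0}(-1)^n\Pi_p^{(n)}(x)$ and the recursion~\eqref{eq:prelim:OZE}.

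Given the OZE, the remaining claims are essentially Fourier-analytic. Taking Fourier transforms in~\eqref{eq:prelim:OZE} and rearranging gives
\[
\ftau(k)=\frac{\widehat\jeq(k)+\fpip(k)}{1-p\bigl(\widehat\jeq(k)+\fpip(k)\bigr)},
\]
and since $\widehat\jeq(k)=\Omega\,\fconnf(k)$, the infra-red bound~\eqref{eq:prelim:nfrared} reduces to showing $|\fpip(k)|\le C'/d$ uniformly in $k$. For the bound $2dp\le 1+C/d$, I would evaluate at $k=0$: letting $p\uparrow p_c$, the susceptibility $\ftau(0)$ diverges, so the denominator must vanish, forcing $p(\Omega+\fpip(0))=1$; once again $|\fpip(0)|\le C'/d$ closes the estimate. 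Both reductions thus rest on~\eqref{eq:prelim:Pi_(n)_bounds}.

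The heart of the argument is therefore~\eqref{eq:prelim:Pi_(n)_bounds}. My strategy here is diagrammatic. For each $n$, apply the BK inequality to the event
$\{\orig\Longleftrightarrow u_0\}_0\cap\bigcap_{i=1}^n E'(u_{i-1},u_i;\C_{i-1})_i$
to decouple the intersecting connections into disjoint pieces, reducing the sum $\sum_x\Pi_p^{(n)}(x)$ to a product of convolutions of $\taup$ and $\jeq$ evaluated at the origin. These diagrams can be controlled by powers of the triangle diagram $\trip=(\taup\ast\taup\ast\taup)(\orig)-\taup(\orig)^3$ (plus small boundary corrections), and the case $n=0$ is special because $\Pi_p^{(0)}(x)=\pp(\orig\Longleftrightarrow x)-\jeq(x)$ already contributes an unavoidable $O(1/d)$ term—this is the reason for the $n\vee 1$ in the exponent. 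Smallness of $\trip$ in high dimensions then gives the desired $(C/d)^{n\vee 1}$ decay. The elementary convolution bounds Observation~\ref{obs:prelim:J_convolutions} and Observation~\ref{obs:prelim:tau_J_extraction} are precisely the tools needed to convert $\taup$-factors inside the diagrams into manageable $\jeq$-convolutions with explicit $\Omega$-scaling.

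The main obstacle is the circularity of this scheme: in order to bound the diagrams we need a good bound on $\taup$, but the bound on $\taup$ flows from~\eqref{eq:prelim:nfrared}, which itself requires~\eqref{eq:prelim:Pi_(n)_bounds}. Following Hara--Slade, I would resolve this by a bootstrap on $p\in[0,p_c)$: posit as an a priori hypothesis that a relaxed version of~\eqref{eq:prelim:nfrared} (with doubled constants) holds, use it to bound the diagrams and so recover~\eqref{eq:prelim:Pi_(n)_bounds}, then feed this back into~\eqref{eq:prelim:OZE} to deduce the original, stronger infra-red bound; continuity of $p\mapsto\ftau(k)$ on $[0,p_c)$ then forces the hypothesis to hold throughout this interval. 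Site percolation is more delicate than bond percolation at this step because the underlying randomness sits on vertices, so disjoint-occurrence arguments must be made compatible with the modified cluster $\widetilde\C^{u}$ from Definition~\ref{def:prelim:connection_events}; the delicate interplay between $\widetilde\C^{u}$ and $E'$ in Definition~\ref{def:prelim:lace_expansion_coefficients} is engineered precisely so that the BK step goes through.
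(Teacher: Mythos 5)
The paper does not prove this proposition: the sentence immediately before its statement says it \emph{summarizes the main results of}~\cite{HeyMat19a} \emph{(namely, Theorem 1.1 and Proposition 4.2)}, and throughout the paper it is used as a black box. So there is no in-paper proof to compare your argument against. Your sketch is a fair high-level reconstruction of the strategy one finds in~\cite{HeyMat19a} and, more generally, in the Hara--Slade lace-expansion framework: derive the OZE by repeated inclusion--exclusion over cutting pivotal sites, pass to Fourier space, and close a bootstrap that simultaneously yields the infra-red bound and the diagrammatic estimates. Nothing in your outline is wrong in spirit, but it compresses a paper-length argument to a few paragraphs, leaving the hard parts (the site-percolation cutting-point factorization lemma, the precise diagrammatic bounds, the exact choice of bootstrap functions and the forbidden-region/continuity argument) entirely unstated; as a substitute for the cited theorem it is not a proof.

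If you did want to flesh this out, a few points need more care. The reduction of~\eqref{eq:prelim:nfrared} to a bound on $\fpip$ requires $p\,|\fpip(k)|\le C/d$ (not $|\fpip(k)|\le C'/d$, which is off by a factor of $\Omega$), and controlling the denominator $1-p(\widehat\jeq(k)+\fpip(k))$ uniformly in $k$ already presupposes $2dp\le 1+C/d$ together with sign information on $\fconnf(k)$; this is precisely what the bootstrap must supply, so the claim cannot be taken for granted. Your definition of the triangle, $(\taup\ast\taup\ast\taup)(\orig)-\taup(\orig)^3$, is not the one the paper uses: Definition~\ref{def:prelim:modified_two-point_functions} inserts factors of $p$ and the modified two-point functions $\taupf,\taupo$, and those modifications are essential to getting $\trip=O(1/d)$ in the nearest-neighbour model. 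Finally, the factorization at a pivotal \emph{site} is genuinely more delicate than at a pivotal bond; the definitions of $\widetilde\C^{u}$, $\thinn{A}$ and $E'$ are engineered so that the BK/factorization step works, and you correctly flag this but do not say how the factorization lemma is to be stated or proved. These are exactly the ingredients that~\cite{HeyMat19a} supplies and that the present paper deliberately imports rather than reproving.
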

As a consequence, we also have $p \sum_{x} \Pi_p(x) \leq C/d$.

\subsection{Diagrammatic bounds}
In the proofs to follow, we need another result from~\cite{HeyMat20}. We formulate it in terms of a diagrammatic notation, as we are going to make use of this later as well. To this end, we introduce some quantities related to $\taup$.

\begin{definition}[Modified two-point functions and triangles] \label{def:prelim:modified_two-point_functions}
Let $x\in\Zd$ and define
	\[ \taupo(x) := \delta_{\orig,x} + \taup(x), \qquad \taupf(x) = \delta_{\orig,x} + p\taup(x).\]
Moreover, let  $\trip(x) = p^2(\taup\ast\taup\ast\taup)(x)$, $\tripf(x) = p(\taupf\ast\taup\ast\taup)(x)$, $\tripof(x) = p(\taupf\ast\taupo\ast\taup)(x)$, and $\tripoff(x) = (\taupf\ast\taupf\ast\taupo)(x)$. We also set
	\[ \trip = \sup_{x \in\Zd} \trip(x), \quad \tripf = \sup_{\orig \neq x \in\Zd} \tripf(x), \quad\tripof = \sup_{\orig \neq x \in\Zd} \tripof(x), \quad \tripoff = \sup_{x\in\Zd} \tripoff(x).  \]
\end{definition}

We need the following bounds obtained in~\cite{HeyMat20}.
\begin{prop}[{{Triangle bounds, \cite[Lemma 4.7]{HeyMat20}}}] \label{thm:prelim:triangle_bounds}
Let $p\in[0,p_c]$. Then there is $d_0\geq 6$ and a constant $C=C(d_0)$ such that, for all $d>d_0$,
	\[ \max\{ \trip, \tripf, \tripof\} \leq C /d, \qquad \max\{\tripf(\orig), \tripof(\orig),  \tripoff\} \leq C.  \]
\end{prop}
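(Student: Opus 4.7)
The plan is to expand each of the six triangles into a finite linear combination of basic diagrams $p^{a}\taup^{\ast n}(x)$ with $a\in\{0,1,2\}$ and $n\in\{1,2,3\}$ by substituting $\taupf=\delta_{\orig,\cdot}+p\taup$ and $\taupo=\delta_{\orig,\cdot}+\taup$. For example,
\begin{align*}
\tripf(x)=p(\taup\ast\taup)(x)+p^{2}(\taup\ast\taup\ast\taup)(x),
\end{align*}
with analogous decompositions for $\tripof(x)$, $\tripoff(x)$, and $\trip(x)=p(\taup\ast\taup\ast\taup)(x)$. For each pair $(a,n)$ arising, it then suffices to prove $p^{a}\taup^{\ast n}(\orig)=\O(1)$ (for the origin bounds) and $\sup_{x\ne\orig}p^{a}\taup^{\ast n}(x)=\O(\Omega^{-1})$ (for the suprema).

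\textbf{At the origin.} I would use Plancherel together with the infrared bound \eqref{eq:prelim:nfrared}. Since $\taup$ is $\Zd$-symmetric, $\ftau(k)$ is real, so
\begin{align*}
p^{a}\taup^{\ast n}(\orig)=p^{a}\int_{\fspace}\ftau(k)^{n}\,\frac{\dd k}{(2\pi)^{d}}\le p^{a-n}\int_{\fspace}\Bigl(\frac{|\fconnf(k)|+C/d}{1-\fconnf(k)}\Bigr)^{\!n}\,\frac{\dd k}{(2\pi)^{d}}.
\end{align*}
The right-hand integral is a standard high-dimensional random-walk diagram; splitting $\fspace$ into $\{|k|\le\varepsilon\}$ (where $1-\fconnf(k)\sim|k|^{2}/(2d)$) and its complement yields a bound uniform in $d$ once $d>6$ and $n\le 3$. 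Combined with $\Omega p\le 1+C/d$ from Proposition~\ref{thm:prelim:convergence_of_LE}, each elementary piece with $a\ge n-1$ is $\O(1)$, which handles $\tripf(\orig)$, $\tripof(\orig)$, and $\tripoff$.

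\textbf{Away from the origin.} A pure Fourier bound is blind to the constraint $|x|\ge 1$, so I would apply Observation~\ref{obs:prelim:tau_J_extraction} with a suitable $m$:
\begin{align*}
p^{a}\taup^{\ast n}(x)\le c\sum_{l=0}^{m-1}p^{a+l}\,\jeq^{\ast(l+n)}(x)+c\sum_{j=1}^{n}p^{a+m+j-n}\bigl(\jeq^{\ast m}\ast\taup^{\ast j}\bigr)(x).
\end{align*}
For the first sum, Observation~\ref{obs:prelim:J_convolutions} gives $\jeq^{\ast(l+n)}(x)\le(l+n)!\,\Omega^{(l+n-|x|)/2}$, non-zero only when $l+n\ge|x|$ and $l+n-|x|$ is even; combined with $p\le(1+C/d)/\Omega$ and $|x|\ge 1$, each summand is $\O(\Omega^{-1})$. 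For the remainder I would use Cauchy--Schwarz, $\sup_{x}(\jeq^{\ast m}\ast\taup^{\ast j})(x)\le\sqrt{\jeq^{\ast 2m}(\orig)\,\taup^{\ast 2j}(\orig)}\le C\sqrt{(2m)!}\,\Omega^{m/2}$, using Observation~\ref{obs:prelim:J_convolutions} and the origin estimate of the previous step. The prefactor $p^{a+m+j-n}\le\Omega^{-(a+m+j-n)}$ then makes the remainder $\O(\Omega^{-1})$ once $m$ is taken just large enough (a fixed finite $m$ depending only on the triangle).

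\textbf{Main obstacle.} The delicate point is precisely this last step: Fourier estimates alone collapse the distinction between $x=\orig$ and $x\ne\orig$, whereas the $\jeq$-expansion accumulates combinatorial factors $(l+n)!,(2m)!$ that grow with $m$. Choosing $m$ just large enough that $p^{a+m+j-n}$ supplies the missing $\Omega^{-1}$, while correctly tracking the parity condition in Observation~\ref{obs:prelim:J_convolutions}, is the main book-keeping challenge and the step where the proof is easiest to get wrong.
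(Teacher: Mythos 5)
This proposition is not proved in the present paper: it is imported verbatim as \cite[Lemma 4.7]{HeyMat19a}, so there is no internal proof here to compare with. Your blind reconstruction goes in a sensible direction---decompose the triangles into pieces $p^{a}\taup^{\ast n}$ and treat $x=\orig$ via Plancherel and $x\ne\orig$ via the $\jeq$-expansion, which is the architecture of the arguments in \cite{HeyMat19a}---but as written there are genuine gaps at both stages.

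\emph{At the origin.} From $p^{a}\taup^{\ast n}(\orig)\le p^{a-n}\int\bigl(\tfrac{|\fconnf|+C/d}{1-\fconnf}\bigr)^{n}\dd k/(2\pi)^d$ with $a\le n-1$ you need an \emph{upper} bound on $p^{a-n}=p^{-(n-a)}$, i.e.\ a \emph{lower} bound on $p$. The inequality $\Omega p\le 1+C/d$ that you invoke gives the opposite, $p^{-1}\ge\Omega/(1+C/d)$, and your estimate degenerates as $p\downarrow 0$. One must first note that $p^{a}\taup^{\ast n}(\orig)$ is increasing in $p$ and reduce to $p=p_c$, where $\Omega p_c=1+\O(\Omega^{-1})$; this step is missing. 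Even then, ``the integral is uniform in $d$'' is not enough: with $p_c^{-(n-a)}\asymp\Omega^{n-a}\ge\Omega$ for $a=n-1$ you need the integral to be $\O(1/d)$, not $\O(1)$. That is true (write $\fconnf/(1-\fconnf)=\widehat G_1-1$ and use $G_1(\orig)=1+\O(1/d)$, $\|G_1\|_2^2=1+\O(1/d)$), but it is exactly the cancellation that carries the proof and must be spelled out.

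\emph{Away from the origin.} The claim that every summand $p^{a+l}\jeq^{\ast(l+n)}(x)$ is $\O(\Omega^{-1})$ fails for the pair $(a,n)=(1,3)$ that your expansion assigns to $\trip$: at $l=0$, $|x|=1$, Observation~\ref{obs:prelim:J_convolutions} gives $\jeq^{\ast3}(e_1)\asymp\Omega$, so $p\,\jeq^{\ast3}(e_1)\asymp 1$, and indeed $p\,\taup^{\ast3}(e_1)\ge p(3\Omega-3)\to 3$, which is not $\O(1/d)$. Your decomposition of $\trip$ is therefore off by (at least) a power of $p$ relative to the normalization actually used in \cite{HeyMat19a}; the bookkeeping cannot close with the recollected definition. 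In the Cauchy--Schwarz step you also write $\sqrt{\jeq^{\ast 2m}(\orig)\,\taup^{\ast 2j}(\orig)}\le C\sqrt{(2m)!}\,\Omega^{m/2}$, but $\taup^{\ast 2j}(\orig)\asymp\Omega^{j}$, not $\O(1)$, so a factor $\Omega^{j/2}$ is missing (fortunately the extra powers of $p$ absorb it once $m$ is even and chosen large). Finally, the remainder terms $p^{2m+j-1}\bigl(\jeq^{\ast 2m}\ast\taup^{\ast j}\bigr)$ are precisely what Lemma~\ref{lem:prelim:J_tau_convolutions} is there to bound; you should simply cite it rather than redo a Cauchy--Schwarz estimate.
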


As part of the proof that bounds the functions $\Pi_p^{(i)}$ in~\cite{HeyMat20}, a first bound is formulated in terms of a long sum over products of the modified two-point functions. In a second step, those are decomposed into products of the modified triangles. We need a formulation of this intermediate bound on $\Pi_p^{(i)}$ for $i\in\{1,2\}$ for Section~\ref{sec:exp:bounds_on_Pis}, as well as a pictorial representation. We first state the needed bound on $\Pi_p^{(1)}$.

\begin{lemma}[{{Diagrammatic bound on $\Pi_p^{(1)}$, \cite[Lemma 3.10]{HeyMat20}}}] \label{lem:prelim:diagrammatic_bound_Pi1}
Let $p \in [0, p_c]$. Then
	\eqq{ \sum_{x\in\Zd} \Pi_p^{(1)}(x) \leq \sum_{\substack{w,u,t,z,x \in\Zd: \\ u \neq x, |\{t,z,x\}| \neq 2}} \taupf(w) \taup(u)\taup(w-u) 
					\taupo(z-w) \taupf(t-u) \taupf(z-t) \taupf(x-t) \taupo(x-z).  \label{eq:prelim:Pi1_diagrammatic_bound}}
\end{lemma}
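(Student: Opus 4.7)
}
The plan is to unpack the definition of $\Pi_p^{(1)}$, condition on $\omega_0$, and then apply BK-type inequalities separately on $\omega_0$ and $\omega_1$, reading off a skeleton of five vertices $w,u,t,z,x$ that witnesses the event. Writing $w=u_0$, we begin from
\[\Pi_p^{(1)}(x)=p\sum_{w\in\Zd}\E_0\Big[\mathds1_{\{\orig\Longleftrightarrow w\}_0}\,\pp\big(E'(w,x;\widetilde\C^{w}(\orig;\omega_0))_1\big)\Big],\]
and handle the two configurations separately. Summing over $x$ at the end produces $\sum_x\Pi_p^{(1)}(x)$.

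On $\omega_0$, the doubly-connected event $\{\orig\Longleftrightarrow w\}$ decomposes as a pair of edge-disjoint paths from $\orig$ to $w$. I would pick a branching vertex $u$ on one of the two paths (e.g.\ the penultimate vertex before $w$ along a distinguished path, or $\orig$ itself if $w$ is a neighbour of $\orig$; this is what lets one path be recorded by $\taupf(w)$ in the modified sense $\delta_{0,w}+p\taup(w)$) and apply BK to get $\pp(\orig\Longleftrightarrow w)\le\sum_{u}\taup(u)\,\taup(w-u)\,\taupf(w)$, with the convention that $u$ cannot equal $w$ because the two paths are disjoint. This produces three of the required factors.

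For the $\omega_1$-part, the event $E'(w,x;A)$ with $A=\widetilde\C^{w}(\orig;\omega_0)$ forces a connection from $w$ to $x$ with no pivotal $u'$ such that $w\throughconn{A}u'$. Hence along the backbone from $w$ to $x$, the set $\thinn{A}$ can only be reached inside the \emph{last} sausage of the backbone (or at $x$ itself). I would take $t$ to be the last pivotal (or $w$ if none exists), and $z$ to be a point of the backbone that realises the hit of $\thinn A$ (or $x$ itself when $x\in\thinn A$). The last sausage is a two-disjoint-connection piece between $t$ and $x$ that is forced to touch $\thinn A$ at $z$; applying BK with these three disjoint connections (from $t$ to $z$, from $z$ to $x$, and from $t$ directly to $x$) yields $\taupf(z-t)\,\taupo(x-z)\,\taupf(x-t)$, and the backbone from $w$ to $t$ contributes $\taupf(t-w)$. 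The constraint $|\{t,z,x\}|\ne2$ is the bookkeeping of the ``no pivotal'' condition: two of these vertices coinciding while the third is distinct would create exactly the sort of pivotal that $E'$ excludes.

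The last ingredient is to close the loop between $\omega_0$ and $\omega_1$: the statement $z\in\thinn A$ means that some neighbour $y$ of $z$ lies in $\widetilde\C^{w}(\orig;\omega_0)$, i.e.\ $\orig$ is connected in $\omega_0$ to $y$ via paths avoiding $w$. Summing over $y$ with the factor $\jeq(z-y)$ and using $\taupo$ to cover the case $z=w$ produces the factor $\taupo(z-w)$, and the requirement that $y$ be distinct from the already chosen $w$ is what forces $u\ne x$ after relabeling. Putting the two halves together and taking $\sup$ over no variables (the diagram is fully summed) gives exactly the right-hand side of \eqref{eq:prelim:Pi1_diagrammatic_bound}. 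The main obstacle I anticipate is the rigorous justification that BK can be applied to the joined structure despite $\widetilde\C^{w}(\orig;\omega_0)$ being a random set when viewed from $\omega_1$; this is overcome by the standard device of conditioning on $\widetilde\C^{w}(\orig;\omega_0)$ being a fixed set $A$, applying BK on $\omega_1$ for that deterministic $A$, and only then summing back over the realisations of $\omega_0$.
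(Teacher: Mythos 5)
The paper does not actually prove this lemma: it is imported verbatim as \cite[Lemma 3.10]{HeyMat19a}, and the only content added here is the one-line remark after the statement that the bound, proved in that reference for $p<p_c$, extends to $p=p_c$ by monotonicity and a limit argument. Your proposal instead attempts a full derivation, and it is in the right spirit (condition on $\omega_0$, view $E'(w,x;\C_0)_1$ with $\C_0$ as a deterministic set, apply BK on each configuration, read off a skeleton of summed vertices). But as written it does not reproduce the right-hand side of \eqref{eq:prelim:Pi1_diagrammatic_bound}, and I would not call it correct.

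The clearest gap: your backbone factor is $\taupf(t-w)$, while the stated bound has $\taupf(t-u)$. These produce genuinely different diagrams. In yours, the branching vertex $u$ dangles off the $\omega_0$ double connection and plays no role on the $\omega_1$ side; in the correct bound, $u$ is precisely the vertex tying the $\omega_0$-skeleton to the $\omega_1$-backbone. The same confusion shows in your $\omega_0$ step: BK alone gives $\pp(\orig\Longleftrightarrow w)\le\taup(w)^2$ with no sum over $u$ and no third factor, and the bound $\sum_u\taup(u)\taup(w-u)\taupf(w)$ you write is not what BK yields for that event. The branching vertex $u$ and the third factor only enter because one must \emph{jointly} bound $\{\orig\Longleftrightarrow w\}$ together with the additional $\omega_0$-connection from $\C_0$ towards the point where the $\omega_1$-backbone meets $\thinn{\C_0}$, which your sketch does not articulate. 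Your treatment of the combinatorial constraints is also off: you invoke "the convention that $u$ cannot equal $w$", but the lemma imposes $u\neq x$, not $u\neq w$, and your explanation of $u\neq x$ (``$y$ distinct from the already chosen $w$ forces $u\neq x$ after relabeling'') does not match the roles these labels actually play in the diagram. Finally, even a repaired version of your argument would deliver the bound only for $p<p_c$; the extension to $p=p_c$ still requires the monotonicity-plus-limit argument the paper mentions, which you omit.
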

The bounds in~\cite{HeyMat20} are formulated only for $p<p_c$, but as the bounds are increasing in $p$, a limit argument easily extends them to the critical point. We now show how we represent the bound in~\eqref{eq:prelim:Pi1_diagrammatic_bound} in terms of pictorial diagrams. As the bound on $\Pi_p^{(2)}$ is even longer to write down, Lemma~\ref{lem:prelim:diagrammatic_bound_Pi2} is stated only in terms of these pictorial bounds.

The points $w,u,t,z,x$ summed over are represented as squares, factors of $\taup$ are represented as lines, and lines with a `$\bullet$' (`$\circ$') symbol represent factors of $\taupf$ ($\taupo$). For example, the factor $\taup(w-u)$ is represented as a line between two squares, which we think of as the points $w$ and $u$. We interpret the factor $\taup(u)$ as a line between $u$ and the origin. We indicate the position of $u$ and $x$ in the below diagrams. After expanding the two cases in~\eqref{eq:prelim:Pi1_diagrammatic_bound} according to whether $|\{t,z,x\}|=3$ or $|\{t,z,x\}|=1$, this pictorial representation allows us to rewrite the bound in~\eqref{eq:prelim:Pi1_diagrammatic_bound} as
	\al{ \sum_{x\in\Zd} \Pi_p^{(1)}(x) &\leq p^2 \sum_{w,u,t,z,x \in\Zd} \taupf(w) \taup(u)\taup(w-u) \taupo(z-w) \taupf(t-u) \taup(z-t) \taup(x-t) \taup(x-z) \\
		& \qquad + p \sum_{w,u,x \in\Zd} \taupf(w) \taup(u)\taup(w-u) \taupo(x-w) \taup(x-u) \\
		& \leq p^2 \sum  \mathrel{\raisebox{-0.25 cm}{\includegraphics{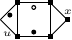}}}
				\ + p \sum \mathrel{\raisebox{-0.25 cm}{\includegraphics{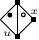}}}.  }
We now formulate the bound on $\Pi_p^{(2)}$; more precisely, we are going to insert a case distinguishing indicator, resulting in two bounds.

\begin{lemma}[{{Diagrammatic bound on $\Pi_p^{(2)}$, \cite[Lemma 3.10]{HeyMat20}}}] \label{lem:prelim:diagrammatic_bound_Pi2}
Let $p\in[0,p_c]$. Then
	\algn{ \sum_{u,v,x\in\Zd} \pp & \Big( \{\orig \Longleftrightarrow u\}_0 \cap E'(u,v;\C_0)_1 \cap E'(v,x;\C_1)_2 \cap\big(\{v\notin\thinn{\C_0}\}_0 \cup \{x\notin\thinn{\C_1}\}_1 \big) \Big) \notag\\
		& \leq p^5 \sum \mathrel{\raisebox{-0.25 cm}{\includegraphics{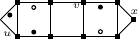}}} 
				\ + p^4 \sum \mathrel{\raisebox{-0.25 cm}{\includegraphics{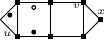}}} 
				\ + p^4 \sum \mathrel{\raisebox{-0.25 cm}{\includegraphics{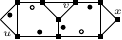}}} \notag\\
		& \qquad \ + p^3 \sum \mathrel{\raisebox{-0.25 cm}{\includegraphics{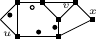}}}
				\ + p^3 \sum \mathrel{\raisebox{-0.25 cm}{\includegraphics{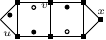}}}  \label{eq:prelim:Pi2_diagrammatic_bound_A}}
and 
	\algn{ \sum_{u,v,x\in\Zd} \pp & \Big( \{\orig \Longleftrightarrow u\}_0 \cap E'(u,v;\C_0)_1 \cap E'(v,x;\C_1)_2 \cap \{v\in\thinn{\C_0}\}_0 \cap \{x\in\thinn{\C_1}\}_1 \Big) \notag\\
		& \leq  p^2 \sum \mathrel{\raisebox{-0.25 cm}{\includegraphics{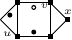}}}. \label{eq:prelim:Pi2_diagrammatic_bound_B}}	
\end{lemma}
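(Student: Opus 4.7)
The plan is to extend the proof of the diagrammatic bound on $\Pi_p^{(1)}$ from \cite{HeyMat19a} by one additional lace-expansion step, while carefully tracking the indicators $\mathds{1}\{v\in\thinn{\C_0}\}$ and $\mathds{1}\{x\in\thinn{\C_1}\}$ that distinguish the two claims. Starting from Definition~\ref{def:prelim:lace_expansion_coefficients}, I would condition successively on $\omega_0$ and $\omega_1$ so that $\C_0$ and $\C_1$ become fixed random sets and the three configurations become independent; the probability of the intersection then factorises across the events taking place on $\omega_0$, $\omega_1$, and $\omega_2$.

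For each through-$A$ connection $u\throughconn{A}v$ appearing in an $E'$ event, the key decomposition follows Definition~\ref{def:prelim:extended_connection_stuff}(i): either (a) $v\in\thinn{A}$, in which case the connection costs a single factor of $\taup(v-u)$ together with an indicator pinning $v$ to $\thinn{A}$, or (b) $v\notin\thinn{A}$, in which case BK produces a vertex $w\in\thinn{A}$ with $\{u\longleftrightarrow w\}\circ\{w\longleftrightarrow v\}$, contributing a two-point function on each segment. The no-pivotal restriction in $E'$ enforces the familiar double connection at the entry into $\thinn{A}$, closing a loop and yielding the triangular structure already encountered in Lemma~\ref{lem:prelim:diagrammatic_bound_Pi1}. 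When the expectation over $\omega_0$ (respectively $\omega_1$) is finally taken, each indicator $\mathds{1}\{w\in\thinn{\C_0}\}$ is bounded by a sum over $z$ with $|w-z|\le 1$ times $\mathds{1}\{z\in\C_0\}$, which contributes a further $\connf$-factor and ultimately a $\taupf$-leg on the appropriate part of the diagram.

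The four combinations of cases (a) and (b) for $u\throughconn{\C_0}v$ and $v\throughconn{\C_1}x$ then correspond exactly to the five topologies in~\eqref{eq:prelim:Pi2_diagrammatic_bound_A} (whenever at least one of the two is in case (b)) and to the single topology in~\eqref{eq:prelim:Pi2_diagrammatic_bound_B} (when both are in case (a)). Each occurrence of case (a) lowers the power of $p$ by one because a pinning indicator merges into an existing $\taup$-leg, which accounts for the exponents $p^{5},p^{4},p^{4},p^{3},p^{3}$ in~\eqref{eq:prelim:Pi2_diagrammatic_bound_A} and the single $p^{2}$ in~\eqref{eq:prelim:Pi2_diagrammatic_bound_B}. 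The main obstacle will be the bookkeeping of the $\bullet$ and $\circ$ decorations: one has to verify that each $\taupf$-leg in the final picture really arises from a $\delta+p\taup$ split produced by a pinning indicator, and each $\taupo$-leg from a vertex where two disjoint-occurrence paths share an endpoint. Conceptually the argument is the same as for $\Pi_p^{(1)}$, but with one additional sausage and therefore one additional layer of case distinction.
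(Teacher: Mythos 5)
This lemma is not proved in the paper at all: it is imported verbatim from \cite[Lemma~3.10]{HeyMat19a}, as the lemma's own header indicates, and the only proof content the present paper contributes is the single remark after Lemma~\ref{lem:prelim:diagrammatic_bound_Pi1} that the bounds of \cite{HeyMat19a}, which are stated there for $p<p_c$, extend to $p=p_c$ by a monotonicity and limit argument. So there is no ``paper's own proof'' to compare your sketch against; you are reconstructing the argument of the companion paper.

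Taking your sketch on its own terms, the overall strategy (condition successively on $\omega_0,\omega_1$ to decouple the three configurations, decompose each $\{\cdot\throughconn{A}\cdot\}$ according to whether the terminal point lies in $\thinn{A}$, invoke BK to split disjoint-occurrence events into products of two-point functions, and let the pivotality restriction in $E'$ force a double connection) is indeed the standard Hara--Slade mechanism and matches the spirit of the $\Pi_p^{(1)}$ bound. But your case bookkeeping is too coarse to reproduce the six stated diagrams and their powers of $p$. You assert that the four combinations of cases (a)/(b) for the two $E'$ events account for the five topologies in \eqref{eq:prelim:Pi2_diagrammatic_bound_A} and the one in \eqref{eq:prelim:Pi2_diagrammatic_bound_B}, and that each case (a) uniformly lowers the exponent of $p$ by one. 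Neither claim survives counting: three combinations cannot produce five distinct diagrams without further splitting, and the exponents $p^5,p^4,p^4,p^3,p^3$ for (A) and $p^2$ for (B) do not arise from a single binary choice per sausage. Already for $\Pi_p^{(1)}$ the displayed rewriting splits the single bound \eqref{eq:prelim:Pi1_diagrammatic_bound} into $p^2$ and $p^1$ terms not by the (a)/(b) dichotomy but by distinguishing $|\{t,z,x\}|=1$ from $|\{t,z,x\}|=3$, i.e.\ by collapse of auxiliary summation variables; the analogous collapses (one per sausage, interacting) are what generate the $3\times 2$ indexing of the diagrams $\mathtt{N2\_exp}_{ij}$. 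Your sketch omits this second layer of case distinction entirely, so as written it cannot recover the claimed bound. It also says nothing about the $p<p_c\to p=p_c$ extension, which is the one step the paper actually supplies.
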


\subsection{Convolution bounds}
The last result from~\cite{HeyMat20} we need to state is going to be important for the proofs of Section~\ref{sec:exp:bounds_l_connection}.

\begin{lemma}[{{Bounds on convolutions of $\jeq$ and $\taup$, \cite[Lemma 4.6]{HeyMat20}}}] \label{lem:prelim:J_tau_convolutions}
Let $m,n\in\N_0$ with $2m+n\geq 2$. For $p\in[0,p_c]$ and $d>20n/9$,
	\[ \sup_{a\in\Zd} p^{2m+n-1} \big(\jeq^{\ast 2m} \ast \taup^{\ast n}\big)(a) \leq c \Omega^{1-m} \]
for some constant $c=c(m,n)$.
\end{lemma}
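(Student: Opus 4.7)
The plan is to split into the cases $n = 0$ and $n \geq 1$, handling the former combinatorially and the latter by Fourier analysis together with the infrared bound~\eqref{eq:prelim:nfrared}.

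When $n = 0$, the hypothesis $2m + n \geq 2$ forces $m \geq 1$, so Observation~\ref{obs:prelim:J_convolutions} gives $\jeq^{\ast 2m}(a) \leq c(2m)\, \Omega^m$ for every $a \in \Zd$, and combining with $p^{2m-1} \leq c\, \Omega^{1-2m}$ from Proposition~\ref{thm:prelim:convergence_of_LE} immediately yields $p^{2m-1} \jeq^{\ast 2m}(a) \leq c\, \Omega^{1-m}$.

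When $n \geq 1$, by monotonicity of $\taup$ in $p$ (the standard coupling of site percolation makes $\{\orig \longleftrightarrow x\}$ an increasing event, and $p \mapsto p^{2m+n-1}$ is itself increasing for $2m+n \geq 1$), it suffices to prove the bound at $p = p_c$, where $p\Omega$ is bounded by a positive constant thanks to Proposition~\ref{thm:prelim:convergence_of_LE}. Writing $\widehat J = \Omega\, \fconnf$ and exploiting that $\jeq$ and $\taup$ are symmetric (so $\widehat J, \ftau$ are real-valued), Fourier inversion gives
\[
\sup_{a\in\Zd} (\jeq^{\ast 2m} \ast \tau_{p_c}^{\ast n})(a) \leq \Omega^{2m}\, (2\pi)^{-d} \int_{\fspace} \fconnf(k)^{2m}\, |\widehat\tau_{p_c}(k)|^n\, \mathrm d k.
\]
Multiplying by $p_c^{2m+n-1}$ and regrouping so the $n$ copies of $p_c$ pair with $\widehat\tau_{p_c}$, the prefactor becomes $(p_c \Omega)^{2m-1}\,\Omega = \O(\Omega)$, and the infrared bound~\eqref{eq:prelim:nfrared} reduces the claim to the Fourier estimate
\[
(2\pi)^{-d} \int_{\fspace} \fconnf(k)^{2m} \Bigl(\frac{|\fconnf(k)| + C/d}{1 - \fconnf(k)}\Bigr)^{\!n} \mathrm d k \leq c\, \Omega^{-m}.
\]

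The main obstacle is this last integral estimate. Integrability requires exactly the stated hypothesis $d > 2n$, since $1 - \fconnf(k) \sim |k|^2/\Omega$ near $k = 0$ forces the integrand to behave like $|k|^{-2n}$ there. The $\Omega^{-m}$-scaling comes from the factor $\fconnf^{2m}$: heuristically, $(2\pi)^{-d} \int \fconnf(k)^{2m}\, \mathrm d k = \connf^{\ast 2m}(\orig) = c(2m)\, \Omega^{-m}$ by Observation~\ref{obs:prelim:J_convolutions}, while the simple-random-walk Green's function factor $(1 - \fconnf)^{-n}$ is $\O(1)$ uniformly in large $d$ under the hypothesis $d > 2n$. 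A rigorous argument would split $\fspace$ into a small neighbourhood of the origin (where $\fconnf \approx 1$ and the quadratic expansion of $1 - \fconnf$ is used) and its complement (where the Green's function factor is uniformly bounded and $\fconnf^{2m}$ provides the decay after integration), and balance the two via a H\"older-type inequality, very much in the spirit of the standard infrared estimates in the lace-expansion literature.
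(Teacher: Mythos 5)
The paper does not actually prove this lemma; it cites \cite[Lemma~4.6]{HeyMat19a} and only remarks on how to extend the bound from $p<p_c$ to $p=p_c$ (namely, the cited proof uses only $2dp\leq 1+\O(1/d)$ and a uniform infrared bound, both of which persist at $p_c$). Your proposal is therefore best read as a reconstruction of the cited argument. The $n=0$ case is clean and correct: Observation~\ref{obs:prelim:J_convolutions} gives $\sup_a \jeq^{\ast 2m}(a)\leq c\Omega^m$ (and $\jeq^{\ast 2m}$ vanishes for $|a|>2m$), and $p^{2m-1}\leq c\Omega^{1-2m}$ from Proposition~\ref{thm:prelim:convergence_of_LE} finishes it.

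For $n\geq 1$, the skeleton is right and is indeed the standard lace-expansion route (Fourier inversion combined with the infrared bound~\eqref{eq:prelim:nfrared}), and you correctly isolate the crux as the Fourier integral estimate
\[
(2\pi)^{-d}\int_\fspace \fconnf(k)^{2m}\,\bigl(1-\fconnf(k)\bigr)^{-n}\,\dd k \;\leq\; c\,\Omega^{-m}
\qquad(d>2n,\ d\to\infty).
\]
However, you leave this estimate entirely as a heuristic sketch, and it is genuinely the hard content of the lemma: one must split $\fspace$ into a region where $\fconnf$ is close to $1$ (using the quadratic expansion $1-\fconnf(k)\sim |k|^2/\Omega$) and its complement (where $|\fconnf|$ is bounded away from $1$ and the $\fconnf^{2m}$ factor supplies the $\Omega^{-m}$ decay after integration), with all constants uniform in $d$. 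This is the substance of the random-walk estimates in, e.g., \cite{HarSla90}, and without it the proof is not complete. A further technical point: your appeal to Fourier inversion presumes integrability of $\widehat\jeq^{\,2m}\widehat\tau_{p_c}^{\,n}$, which is delicate precisely because $\widehat\tau_{p_c}$ blows up at $k=0$; the robust route --- and the one the paper's own remark indicates --- is to establish the bound uniformly for $p<p_c$ (where $\taup$ decays exponentially and the Fourier manipulations are unproblematic) and then let $p\nearrow p_c$ by monotone convergence. Your monotonicity observation is in the right spirit but is deployed in the direction that pushes the difficulty to $p_c$ rather than away from it.
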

Lemma 4.6 in \cite{HeyMat20} states only the upper bound $c\Omega^{-1}$, but an inspection of its proof gives the stronger bound of Lemma \ref{lem:prelim:J_tau_convolutions}: for $m\ge4$ this is evident from the first bound on page 842 in  \cite{HeyMat20}, and for $m\le4$ one has to adapt  \cite[(4.10)]{HeyMat20} and the subsequent lines accordingly. 
 
Again, Lemma~4.6 in~\cite{HeyMat20} is stated only for $p<p_c$, but the bounds
	\[ 2dp_c \leq 1 + \omone \quad \text{and} \quad  \sup_{k\in\fspace} \frac{p_c|\widehat\tau_{p_c}(k)|}{\widehat G_1(k)} \leq 1 + \omone \]
are sufficient for the statement to extend to $p_c$. 
While the former bound is a direct consequence of Proposition~\ref{thm:prelim:convergence_of_LE}, the latter bound (for $k\neq 0$) follows from the infra-red bound~\eqref{eq:prelim:nfrared} and $|\fconnf(k)| \leq 1$. 
The bound for $k=0$ follows from the continuity of the Fourier transform.

\section{Proof of Theorem~\ref{thm:expansion_of_p_c}} \label{sec:expansion_critical_point}
In this section, we prove Theorem~\ref{thm:expansion_of_p_c} assuming Lemma~\ref{lem:exp:Pis_expansion}, the latter providing an asymptotic expansion of the lace-expansion coefficients $\Pi^{(0)}, \Pi^{(1)}$, and $\Pi^{(2)}$ up to order $\omtwo$.

\begin{lemma}[Expansion of lace-expansion coefficients] \label{lem:exp:Pis_expansion}
As $d\to\infty$,
	\al{ \widehat\Pi^{(0)}_{p_c}(0) &= \tfrac 12 \Omega^2p_c^2 + \tfrac 52 \Omega^{-1} + \omtwo, \\ \widehat\Pi^{(1)}_{p_c}(0) &= \Omega p_c + 2 \Omega^2 p_c^2 + 4\Omega^{-1} + \omtwo, \\
			 \widehat\Pi^{(2)}_{p_c}(0) &= 10 \Omega^{-1} + \omtwo.}
\end{lemma}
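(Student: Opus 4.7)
The proof is organized as three separate computations, one for each of $\widehat\Pi^{(0)}_{p_c}(0), \widehat\Pi^{(1)}_{p_c}(0), \widehat\Pi^{(2)}_{p_c}(0)$. In every case the plan is the same: isolate a handful of explicit short, ``minimal'' lattice diagrams whose contribution is computed by direct counting, and dump everything else into $\mathcal O(\Omega^{-2})$ via the diagrammatic machinery of Section~\ref{sec:prelim}. Throughout, I keep the leading contributions in the form $\Omega^a p_c^b$ (rather than expanding $p_c$), since ultimately the lemma will be substituted back into the Ornstein-Zernike equation \eqref{eq:prelim:OZE} to solve for $p_c$.

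First, for $\widehat\Pi^{(0)}_{p_c}(0) = \sum_x\bigl[\pp(\orig\Longleftrightarrow x)-\jeq(x)\bigr]$, the sum reduces to $|x|\ge 2$ since neighbours contribute zero and the origin is vacuous. The main diagram comes from $x=\pm e_i\pm e_j$ with $i\neq j$: there are $2d(d-1)=\tfrac12\Omega(\Omega-2)$ such $x$, and for each the double connection is realised by both length-two intermediaries being open, giving a leading probability $p_c^2$. This yields $\tfrac12\Omega^2 p_c^2-\Omega p_c^2 = \tfrac12\Omega^2 p_c^2 - \Omega^{-1}+\mathcal O(\Omega^{-2})$. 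The missing $\tfrac72\Omega^{-1}$ correction is then produced by (i) the $|x|=2$ axis-aligned case $x=\pm 2e_i$ (whose two disjoint paths must have length $2$ and $4$ respectively, since the unique length-two intermediary $\pm e_i$ cannot be shared), and (ii) length-four refinements at $x=\pm e_i\pm e_j$ where one of the intermediaries is replaced by a detour. Once these finitely many minimal contributions are enumerated, everything else ($|x|\ge 3$, or $|x|=2$ with at least five open internal vertices) is bounded at $\mathcal O(\Omega^{-2})$ by expanding each $\taup$ via Observation~\ref{obs:prelim:tau_J_extraction} and applying Observation~\ref{obs:prelim:J_convolutions}, Lemma~\ref{lem:prelim:J_tau_convolutions}, and Proposition~\ref{thm:prelim:triangle_bounds}.

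For $\widehat\Pi^{(1)}_{p_c}(0)$, I use Lemma~\ref{lem:prelim:diagrammatic_bound_Pi1}, which already splits the coefficient into a five-vertex diagram (the $|\{t,z,x\}|=3$ case) and a three-vertex, theta-graph-like diagram (the $|\{t,z,x\}|=1$ case). From the minimal all-nearest-neighbour realisation of the theta-graph diagram I extract $\Omega p_c$; the five-vertex diagram analogously yields $2\Omega^2 p_c^2$. The remaining $4\Omega^{-1}$ sub-leading term is obtained by enumerating the next-smallest diagrams in each case---those containing exactly one additional open vertex---and combining their counts. The remainder from each of the two pictorial diagrams is then bounded at $\mathcal O(\Omega^{-2})$ by repeatedly invoking Lemma~\ref{lem:prelim:J_tau_convolutions} on each $\taup$-line after pulling out finitely many $\jeq$-factors via Observation~\ref{obs:prelim:tau_J_extraction}. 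For $\widehat\Pi^{(2)}_{p_c}(0)$, Lemma~\ref{lem:prelim:diagrammatic_bound_Pi2} splits the coefficient into the two pictorial bounds \eqref{eq:prelim:Pi2_diagrammatic_bound_A} and \eqref{eq:prelim:Pi2_diagrammatic_bound_B}. Among these, a single diagram (one of the $p^3$-weighted ones in \eqref{eq:prelim:Pi2_diagrammatic_bound_A}) carries the leading $10\Omega^{-1}$ contribution, with the factor $10$ arising purely combinatorially from counting concatenations of two length-two disjoint excursions at the appropriate middle vertex; all other diagrams acquire at least one extra convolution of $\jeq$ and are pushed to $\mathcal O(\Omega^{-2})$ by the same estimates.

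The main obstacle is the combinatorial bookkeeping: identifying \emph{every} diagram contributing at order $\Omega^{-1}$ or larger to each $\widehat\Pi^{(n)}_{p_c}(0)$ and correctly computing its multiplicity. This is delicate for $\widehat\Pi^{(0)}$ because the BK-type disjoint-occurrence constraint forbids length-two paths from sharing an intermediary (so $x=\pm2e_i$ and $x=\pm e_i\pm e_j$ must be treated separately), and it is delicate for $\widehat\Pi^{(1)},\widehat\Pi^{(2)}$ because the events $E'(u_{i-1},u_i;\C_{i-1})_i$ together with the designated-vertex modified clusters $\C_i=\widetilde\C^{u_i}(u_{i-1};\omega_i)$ couple successive percolation configurations in a nontrivial way. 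Controlling this coupling rigorously at the level of leading diagrams is what the new bounds on $\ell$-connection events promised in Section~\ref{sec:exp:bounds_l_connection} are designed to achieve; once those are available, the error bounds follow routinely from the tools already recalled in Section~\ref{sec:prelim}.
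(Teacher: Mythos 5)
Your high-level plan---isolate the minimal lattice configurations, count them explicitly, and push the rest to $\omtwo$ via the diagrammatic machinery---matches the paper's strategy in spirit. But the execution as you describe it has two concrete gaps, one of which makes the $\widehat\Pi^{(0)}$ answer come out wrong.

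\emph{First, your $\widehat\Pi^{(0)}$ diagram inventory is incorrect.} You claim the $\tfrac72\Omega^{-1}$ correction comes from (i) $x=\pm 2e_i$ and (ii) length-four detours at $x=\pm e_i\pm e_j$, and you dump all of $|x|\ge 3$ into $\omtwo$. But the $x=\pm2e_i$ case is $\omtwo$, not $\Omega^{-1}$: one of the two disjoint witnesses must avoid $e_i$ and hence has length $\ge 4$ with three internal vertices, and the number of such 4-paths is $O(\Omega)$, giving $\Omega\cdot O(\Omega)\,p_c\cdot p_c^3 = O(\Omega^2p_c^4)=\omtwo$. Meanwhile, the case you discard---$|x|=3$ with $\|x\|_\infty=1$, i.e.\ $x$ spanning a $3$-dimensional cube with $\orig$---contributes $\tfrac32\Omega^{-1}$: there are $\tfrac16\Omega(\Omega-2)(\Omega-4)$ such $x$, each admitting $9$ pairs of internally disjoint $3$-paths, giving $\tfrac16\Omega^3\cdot 9 p_c^4 = \tfrac32\Omega^{-1}+\omtwo$. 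With your list, $-\Omega^{-1}+(\text{i})+2\Omega^{-1}$ cannot reach $\tfrac52\Omega^{-1}$; the paper's accounting ($-\Omega^{-1}+\tfrac32\Omega^{-1}+2\Omega^{-1}=\tfrac52\Omega^{-1}$) does.

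\emph{Second, for $\widehat\Pi^{(1)}$ and $\widehat\Pi^{(2)}$ you propose to ``extract'' the leading asymptotics from the diagrammatic bounds of Lemmas~\ref{lem:prelim:diagrammatic_bound_Pi1} and \ref{lem:prelim:diagrammatic_bound_Pi2}.} Those bounds are one-sided (they are BK-derived upper bounds), so pulling out the ``minimal realisation'' of each diagram can only give an upper bound on $\widehat\Pi^{(n)}$, never a matching lower bound. The paper does something structurally different: it works directly with the probability $\p(\{\orig\Longleftrightarrow u\}_0\cap\bigcap_i E'(u_{i-1},u_i;\C_{i-1})_i)$, splits the sum by $|u|,|x|$ (and by $|v|$ for $n=2$), and for the small cases computes two-sided estimates by inclusion-exclusion on explicit lattice paths (using, e.g., Observation~\ref{obs:exp:pivotality} and the $\ell$-step bounds of Lemma~\ref{lem:exp:l_step_connections}); the diagrammatic upper bounds are used \emph{only} to show that the remaining (large-$|u|$, large-$|x|$, or $v\notin\thinn{\C_0}$, $x\notin\thinn{\C_1}$) cases are $\omtwo$. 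In particular, your claim that $10\Omega^{-1}$ comes from one of the $p^3$-weighted diagrams in~\eqref{eq:prelim:Pi2_diagrammatic_bound_A} is wrong on two counts: the paper shows that all of \eqref{eq:prelim:Pi2_diagrammatic_bound_A} is $\omtwo$, and the coefficient $10$ is not a single combinatorial count but the sum $3+4+3$ of contributions scattered over the cases $v=\orig$, $|v|=1$, and $|v|=2$, each requiring its own inclusion-exclusion with both upper and lower bounds. Until you replace ``extract from the diagrammatic bound'' with a genuine two-sided analysis of the probability itself, the proposal does not establish the stated equalities.
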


Lemma~\ref{lem:exp:Pis_expansion} is the union of Lemmas~\ref{lem:exp:Pi0_finer},~\ref{lem:exp:Pi1_finer},~\ref{lem:exp:Pi2_finer}, which are proved in Section~\ref{sec:exp:bounds_on_Pis}. As a preparation for these proofs, we need Section~\ref{sec:exp:bounds_l_connection}. These proofs are lengthy considerations of numerous percolation configurations in search for contributions of the right order of magnitude (in terms of powers of $\Omega^{-1}$). They are very mechanical in that they boil down to counting exercises and case distinctions. This also means that no new ideas are needed to extend Lemma~\ref{lem:exp:Pis_expansion} to higher orders of $\Omega^{-1}$ and expand the higher-order coefficients $\widehat\Pi^{(3)}, \widehat\Pi^{(4)}$, etc. The necessary effort increases exponentially however.

\begin{proof}[Proof of Theorem~\ref{thm:expansion_of_p_c}]
Let first $p<p_c$. Taking the Fourier transform of~\eqref{eq:prelim:OZE} and solving for $\ftau$ at $k=0$ gives
	\eqq{ p\ftau(0) = \frac{p\Omega + p\fpip(0) }{1- p(\Omega + \fpip(0))}. \label{eq:exp:OZE_fourier}}
A standard result is that $p\ftau(0)=\E_p[|\C(\orig)|]-1$ diverges as $p \nearrow p_c$, cf.\ \cite{AizenNewma84}. 
As the numerator of~\eqref{eq:exp:OZE_fourier} is bounded by $1+\omone$, we conclude that $p_c$ satisfies
	\eqq{ 1 - p_c(\Omega + \widehat\Pi_{p_c}(0)) = 0. \label{eq:exp:pc_denominator_diverging_identity}}
From here on out, we abbreviate $\widehat\Pi = \widehat{\Pi}_{p_c}(0)$ and $\widehat\Pi^{(m)} = \widehat\Pi_{p_c}^{(m)}(0)$. We know from Proposition~\ref{thm:prelim:convergence_of_LE} that $|\widehat\Pi/\Omega| = \omone$, and so rearranging~\eqref{eq:exp:pc_denominator_diverging_identity} yields
	\eqq{\Omega p_c = \frac{1}{1 + \widehat\Pi/\Omega} = 1+ \omone. \label{eq:exp:Omega_pc_first_identity}}
Proposition~\ref{thm:prelim:convergence_of_LE} moreover provides the bound $|\widehat\Pi^{(m)}| = \mathcal O(\Omega^{1-(m\vee 1)})$ for all $m\geq 0$. We can use this to describe $\Omega p_c$ in more detail as
	\begin{align} \Omega p_c &= 1 - \frac{\widehat\Pi^{(0)}/\Omega - \widehat\Pi^{(1)}/\Omega + \widehat\Pi^{(2)}/\Omega + \sum_{m\geq 3} (-1)^m\widehat\Pi^{(m)} /\Omega}{1+\widehat\Pi/\Omega} \notag\\
				&= 1 - \frac{\widehat\Pi^{(0)}/\Omega - \widehat\Pi^{(1)}/\Omega + \widehat\Pi^{(2)}/\Omega}{1+\widehat\Pi/\Omega} + \mathcal O(\Omega^{-3}). \label{eq:exp:Omega_pc_expansion}
	\end{align}
Simplifying~\eqref{eq:exp:Omega_pc_expansion} to an error term of order $\omtwo$ gives
	\eqq{\Omega p_c = 1 - \widehat\Pi^{(0)}/\Omega + \widehat\Pi^{(1)}/\Omega + \omtwo \label{eq:exp:critical_point_first_level}.}
Plugging in the expansion for $\widehat\Pi^{(0)}$ and $\widehat\Pi^{(1)}$ from Lemma~\ref{lem:exp:Pis_expansion} gives $\Omega p_c = 1 + \tfrac 52 \Omega^{-1} + \omtwo$. Using this and the first identity of~\eqref{eq:exp:Omega_pc_first_identity} in~\eqref{eq:exp:Omega_pc_expansion} gives
	\eqq{ \Omega p_c = 1 - \big(\widehat\Pi^{(0)}/\Omega - \widehat\Pi^{(1)}/\Omega + \widehat\Pi^{(2)}/\Omega \big) \big(1 + \tfrac 52 \Omega^{-1} + \omtwo \big)+ \mathcal O(\Omega^{-3}).
				 \label{eq:exp:Omega_pc_expansion_refined}}
Applying Lemma~\ref{lem:exp:Pis_expansion} to~\eqref{eq:exp:Omega_pc_expansion_refined} proves the theorem.
\end{proof}


\section{Further bounds on connection events} \label{sec:exp:bounds_l_connection}
This section extracts some results that are frequently used in the proofs of Section~\ref{sec:exp:bounds_on_Pis}. We start by defining $l$-step connections.
\begin{definition}[$l$-step connections] Let $l\in\N$ and $p\leq p_c$. \label{def:exp:l_step_connections}
\begin{enumerate}
\item We define $\{u \lconn{l} v\}$ as the event that $u$ is connected to $v$ via an occupied and self-avoiding path of length at least $l$ (shorter occupied paths might be present as well), and let $\taup^{(l)} = \pp(u \lconn{l} v)$.

We define $\{u \lconn{ \geq l} v\}$ as the event that $u$ is connected to $v$ but there is no occupied path from $u$ to $v$ of length less than $l$. 
Furthermore, let $\{u \lconn{\leq l} v\}$ be the event that $u$ and $v$ are connected by an occupied path of length at most $l$. Lastly, set $\{u \lconn{= l} v\} := \{u \lconn{\leq l} v\} \cap \{u \lconn{\geq l} v\}$.

\item We define $\{u \lcyc{l} v\}:=\cup_{j=1}^{l-1} \{u \lconn{j} v\} \circ\{u \lconn{l-j} v\}$ as the event that $u$ and $v$ lie in a cycle of length at least $l$, where all sites---except possibly $u$ and $v$---are occupied.

Let $\{ u \lcyc{\geq l} v\}$ be the event that $\{u \Longleftrightarrow v\}$ and the shortest cycle containing $u$ and $v$ (with all other vertices occupied) is of length at least $l$. Similarly, let $\{ u \lcyc{\leq l} v\}$ be the event that $\{u \Longleftrightarrow v\}$ and the shortest cycle containing $u$ and $v$ is of length at most $l$, and let $\{ u \lcyc{= l} v\}:=\{ u \lcyc{\geq l} v\} \cap \{ u \lcyc{\leq l} v\}$. 
\item Also, define
	\al{ \triangle^{(l)} (u,v,w) &:= \sum_{\substack{l_1, l_2, l_3 \geq 1: \\ l_1+l_2+l_3=l}} \taup^{(l_1)}(u)\taup^{(l_2)}(v-u) \taup^{(l_3)}(w-v), \\
		\dtrial^{(l)} (u,t,z,x) &:= \sum_{\substack{l_1, l_2 \geq 0, l_3 \geq 3: \\ l_1+l_2+l_3=l-1} } \big(\delta_{t,u}\delta_{\orig,l_1} + p(1-\delta_{\orig,l_1})\taup^{(l_1)}(t-u) \big)
							\big(\delta_{\orig,z}\delta_{\orig,l_2} + (1-\delta_{\orig,l_2})\taup^{(l_2)}(z) \big) \\
		& \hspace{3cm} \times \jeq(u) \triangle^{(l_3)}(t-z,x-z,\orig).}
\end{enumerate}
\end{definition}

See Figure~\ref{fig:dtri9} for an illustration of $\dtri^{(l)}$. We remark that $\taup^{(1)}=\taup$. Moreover, note that $\Zd$ is bipartite and thus contains no cycles of odd length, which is why $\{u \lcyc{2l-1} v\} = \{u \lcyc{2l} v\}$ and $\triangle^{(2l-1)} (u,v,0) = \triangle^{(2l)} (u,v,0)$.
\begin{figure}
         \centering
         \includegraphics[scale=1]{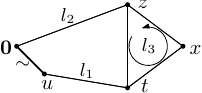}
	\caption{An illustration of the diagrammatic quantity $\dtri^{(l)}$. The `$\sim$' symbol on the line between $\orig$ and $u$ means that $|u|=1$.}
         \label{fig:dtri9}
\end{figure}

The bounds stated in Lemma~\ref{lem:exp:l_step_connections} provide the core tools in dealing with lower-order terms in the bounds on $\Pi^{(i)}$ in the proofs of Section~\ref{sec:exp:bounds_on_Pis}.

\begin{lemma}[Bounds on $l$-step connection probabilities] \ \label{lem:exp:l_step_connections}
Let $2\leq l\in\N, x\in\Zd$ and $p\leq p_c$. Then
	\eqq{ \taup^{(l)}(x) = \mathcal O \big( |x| \Omega^{1- (l+|x|)/2 }  \big). \label{eq:exp:tau_l_bound}}
Moreover,
	\eqq{\sum_{x\in\Zd} \pp ( \orig \lcyc{2l} x) \leq p \sum_{u,x\in\Zd} \triangle^{(2l)}(u,x,\orig) = \mathcal O\big(\Omega^{2-l}\big)\label{eq:exp:cycle_triangle_relation}}
and
	\eqq{ p^2 \sum_{u,t,z,x\in\Zd} \dtrial^{(9)}(u,t,z,x) = \omtwo.\label{eq:exp:double_triangle_bound}}
\end{lemma}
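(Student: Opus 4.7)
The plan is to handle the three claims in order, each building on the previous, using standard union-bound and BK-type arguments.

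For \textbf{Part (1)}, I would bound $\taup^{(l)}(x)$ by the expected number of open nearest-neighbour walks of length $\geq l$ from $\orig$ to $x$. Each length-$l'$ walk is open with probability at most $p^{l'-1}$, and the number of such walks is $\jeq^{\ast l'}(x)$, which Observation~\ref{obs:prelim:J_convolutions} evaluates to $c(l',x)\,\Omega^{(l'-|x|)/2}$ whenever $l' \geq |x|$ has the same parity as $|x|$. Combined with $p\Omega \leq 1 + \mathcal{O}(1/d)$ from Proposition~\ref{thm:prelim:convergence_of_LE}, each term is bounded by $c(l',x)\,(p\Omega)^{l'-1}\,\Omega^{1-(l'+|x|)/2}$, yielding a geometric series in $\Omega^{-1}$ that is dominated by its leading term $\Omega^{1-(l+|x|)/2}$; the linear prefactor $|x|$ absorbs the combinatorial constants.

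For \textbf{Part (2)}, the cycle inequality follows by writing $\{\orig \lcyc{2l} x\} = \bigcup_{j=1}^{2l-1}\{\orig \lconn{j} x\} \circ \{\orig \lconn{2l-j} x\}$, applying BK to obtain $\pp(\orig \lcyc{2l} x) \leq \sum_j \taup^{(j)}(x)\taup^{(2l-j)}(x)$, and conditioning on the first edge of one of the paths to get $\taup^{(j)}(x) \leq p(\jeq \ast \taup^{(j-1)})(x)$. Since $\jeq \leq \taup^{(1)}$ pointwise, each such bound matches the $(l_1,l_2,l_3) = (1, j-1, 2l-j)$ summand of $p\sum_u \triangle^{(2l)}(u,x,\orig)$, and summing over $j$ gives the inequality. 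For the order estimate, I would insert the path-based bound from Part (1) into $p\sum_{u,x}\triangle^{(2l)}(u,x,\orig)$ and evaluate the resulting $\jeq$-convolution at the origin via Observation~\ref{obs:prelim:J_convolutions}; with total walk length $L \geq 2l$, the bound $p^{L-2}\,\jeq^{\ast L}(\orig) \leq c(p\Omega)^{L-2}\,\Omega^{2 - L/2}$ gives a geometric series dominated at $L = 2l$ by $\Omega^{2-l}$.

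\textbf{Part (3)} I expect to be the main obstacle, requiring a careful case analysis. The definition of $\dtrial^{(9)}$ branches on whether $l_1 = 0$ (so $t = u$) or $l_1 \geq 1$ (with factor $p\,\taup^{(l_1)}(t-u)$), and similarly for $l_2$, yielding four cases within the sum over $l_1+l_2+l_3 = 8$ with $l_3 \geq 3$. The sum $\sum_u \jeq(u) = \Omega$ combined with the $p^2$ prefactor provides $\mathcal{O}(\Omega^{-1})$ upfront; the embedded triangle $\triangle^{(l_3)}$ is estimated by Part (2) (invoking the bipartite identity $\triangle^{(2k-1)} = \triangle^{(2k)}$ whenever $l_3$ is odd), and any surviving $\taup^{(l_i)}$-factor with $l_i \geq 1$ contributes an additional $\mathcal{O}(\Omega^{-l_i/2})$ via Part (1). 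The length budget $l_1+l_2+l_3 = 8$ then forces the combined order to be $\mathcal{O}(\Omega^{-2})$ in every case. The hard part will be the mixed cases, in which a $\taup^{(l_i)}$-factor is convolved with the triangle over a shared summation variable; there one must align the convolutions carefully to avoid hidden divergences such as $\widehat\tau_{p_c}(0)$, and the cutoff of length $9$ is evidently tuned so that every surviving case stays within $\mathcal{O}(\Omega^{-2})$.
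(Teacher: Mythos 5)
Your derivation of Part (1) has a genuine gap: the series $\sum_{l'\ge l} p^{l'-1}\jeq^{\ast l'}(x)$ that your union bound produces does \emph{not} converge at $p=p_c$. By Observation~\ref{obs:prelim:J_convolutions} the nonzero terms are $c(l',x)\,p^{l'-1}\Omega^{(l'-|x|)/2}$ with $c(l',x)$ bounded only by $l'!$; after substituting $p\Omega = 1+\mathcal O(1/d)$, the ratio of consecutive nonzero terms is of order $l'^2/\Omega$, which exceeds $1$ once $l'$ is of order $\sqrt\Omega$, so the ``geometric series'' you invoke is not geometric. Nor is this an artefact of the crude $c\le l'!$ bound: even if you restrict to self-avoiding paths and replace $\jeq^{\ast l'}(x)$ by the exact SAW count with connective constant $\mu$, one has $p_c\mu = 1 + \tfrac32(2d)^{-1} + \cdots > 1$, so the walk-counting union bound is genuinely infinite at criticality. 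The paper's proof avoids expanding to infinite order: it iterates $\taup^{(l)}(x)\le p(\jeq\ast\taup^{(l-1)})(x)$ exactly $l-1$ times, retaining one factor of $\taup$, then uses Observation~\ref{obs:prelim:tau_J_extraction} to extract only $m=|x|$ further $\jeq$-steps, and the residual convolution $p^{l-1+m}(\jeq^{\ast(l-1+m)}\ast\taup)(x)$ is controlled by Lemma~\ref{lem:prelim:J_tau_convolutions}, whose proof rests on the infrared bound~\eqref{eq:prelim:nfrared}. That input cannot be replaced by path counting; the same difficulty infects your order estimate for Part (2), where the paper again truncates after finitely many $\jeq$-extractions and invokes Lemma~\ref{lem:prelim:J_tau_convolutions} for the remainder term.

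Your BK-inequality derivation of the first inequality in~\eqref{eq:exp:cycle_triangle_relation} is essentially the paper's. For Part (3) you correctly anticipate a four-way case split on $(l_1,l_2,l_3)$ and flag the convolved $\taup^{(l_i)}$ factors as the delicate point, but this remains a sketch: the paper does not bound the $\taup^{(l_i)}$ factors pointwise and multiply, but rather reassembles each case into products of the modified triangles $\tripf,\tripof,\trip$ of Proposition~\ref{thm:prelim:triangle_bounds} (and, when $l_3\le 4$ forces $l_1+l_2\ge 4$, into $p^4(\jeq^{\ast 3}\ast\taup^{\ast 3})(\orig)$ via Lemma~\ref{lem:prelim:J_tau_convolutions}), and that reassembly is precisely where the convolution alignment you anticipate has to be carried out explicitly.
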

\begin{proof}
We observe that
	\[ \taup^{(l)}(x) \leq \sum_{y \in \Zd} J(y) \pp(y \text{ occupied, } y\lconn{l-1}x) = p (J \ast \taup^{(l-1)})(x).\]
Iterating this yields
	\eqq{\taup^{(l)}(x) \leq p^{l-1} (\jeq^{\ast (l-1)} \ast \taup)(x). \label{eq:exp:l_step_conection_taup_bound}}
To prove the first part in~\eqref{eq:exp:cycle_triangle_relation}, note that by the BK inequality,
	\al{\sum_{x\in\Zd} \pp ( \orig \lcyc{2l} x) &\leq \sum_x \sum_{j=1}^{l} \taup^{(j)}(x) \taup^{(2l-j)}(x) \leq \sum_x \sum_{j=1}^{l} \taup^{(j)}(x) p \big(\jeq\ast \taup^{(2l-j-1)}\big)(x) \\
		& \leq p \sum_x \sum_{j=1}^{l} \taup^{(j)}(x) \sum_u \taup^{(1)}(u) \taup^{(2l-j-1)}(x-u) \leq p \sum_{u,x} \triangle^{(2l)} (u,x,\orig). }
To prove the second part of~\eqref{eq:exp:cycle_triangle_relation}, we combine~\eqref{eq:exp:l_step_conection_taup_bound} with Observation~\ref{obs:prelim:tau_J_extraction}, yielding
	\algn{ p \sum_{u,x\in\Zd }\triangle^{(2l)} (u,x,\orig) &\leq p \sum_{\substack{l_1, l_2, l_3: \\ l_1+l_2+l_3=2l}} \sum_{u,x\in\Zd}	p^{l_1-1} \big( \jeq^{\ast l_1-1}\ast\taup \big)(u) \notag\\
		& \hspace{3cm} \times p^{l_2-1} \big( \jeq^{\ast l_2-1}\ast\taup \big)(x-u) p^{l_3-1} \big( \jeq^{\ast l_3-1}\ast\taup \big)(x) \notag\\
		& = p^{2l-2} \sum_{\substack{l_1, l_2, l_3: \\ l_1+l_2+l_3=2l}} \big( \jeq^{\ast 2l-3}\ast\taup^{\ast 3} \big)(\orig) = p^{2l-2} \binom{2l-1}{2} \big( \jeq^{\ast 2l-3}\ast\taup^{\ast 3} \big)(\orig) \notag\\
		& \leq 2l^2 p^{2l-2} \Big( \jeq^{\ast 2l-3}\ast\big(\jeq + p(\jeq\ast\taup)\big)^{\ast 3} \Big)(\orig) \notag\\
		& \leq 6l^2 \sum_{j=0}^{3} p^{2l-2+j} \big( \jeq^{\ast 2l}\ast\taup^{\ast j} \big)(\orig) \leq \mathcal O(\Omega^{2-l}), \label{eq:prelim:l_triangle_bound}}
where the last inequality is due to Lemma~\ref{lem:prelim:J_tau_convolutions}. 

To prove the bound on $\taup^{(l)}$, we first use the bound~\eqref{eq:exp:l_step_conection_taup_bound} and then apply Observation~\ref{obs:prelim:tau_J_extraction} with $n=1$ and $m=|x|+1$ to obtain
	\al{ \taup^{(l)}(x) &\leq \O(1)\, p^{l+|x|} \big(\jeq^{\ast l+|x|} \ast \taup \big)(x) + \sum_{j=0}^{|x|} \O(1)\, p^{l-1+j} \jeq^{\ast l+j}(x). }
The first term, i.e.~the term including a convolution with $\taup$, is bounded using Lemma~\ref{lem:prelim:J_tau_convolutions}. The second term, i.e.~the convolutions over $\jeq$, are bounded using Observation~\ref{obs:prelim:J_convolutions} and \eqref{eq:exp:Omega_pc_first_identity} to get
	\al{\taup^{(l)}(x) &\leq \O(1)\, \Omega^{1-(l+|x|)/2} + \sum_{j=0}^{|x|} \O(1)\Omega^{1-(|x|+l+j)/2} \leq \big(|x|+2\big) \O(1)\, \Omega^{1-(|x|+l)/2}.  }

To prove~\eqref{eq:exp:double_triangle_bound}, we split $\dtri$. First observe that when $l_1=l_2=0$,
	\[ p^2 \sum_{u,t,z,x}\jeq(u) \delta_{t,u}\delta_{\orig,z} \triangle^{(l_3)}(t-z,x-z,\orig) \leq p^2 \sum_{u,x} \triangle^{(l_3)}(u,x,\orig),\]
which is in $\omtwo$ for $l_3 =9$. Let next $l_1 \neq 0 = l_2$. Then
	\[ p^3 \sum_{u,t,x} \jeq(u) \taup^{(l_1)}(t-u) \triangle^{(l_3)}(t,\orig,x) \leq p^3 \sum \mathrel{\raisebox{-0.25 cm}{\includegraphics{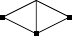}}}
				\ \leq \tripf \trip = \omtwo.\]
When $l_1 = 0 \neq l_2$,
	\eqq{ p^2 \sum_{u,z,x} \jeq(u) \taup^{(l_2)}(z) \triangle^{(l_3)}(u-z,x-z,\orig) = p^2 \sum_{u,z} \triangle^{(l_3)}(\orig,z,u) (\jeq\ast\taup^{(l_2)})(u-z).  \label{eq:exp:dtri_proof_A}}
If $l_3 \geq 5$, then~\eqref{eq:exp:dtri_proof_A} is bounded by $p\tripf \sum_{u,x} \triangle^{(5)}(\orig,u,x) = \omtwo$. If $l_3 \leq 4$, then $l_2 \geq 4$. We can rewrite the left-hand side of~\eqref{eq:exp:dtri_proof_A} as
	\[ p^2\sum_{u,z} \sum_{\substack{m_1,m_2,m_3:\\ m_1+m_2+m_3=l_3}} \jeq(u) \taup^{(l_2)}(z) \taup^{(m_1)}(z-u) (\taup^{(m_2)}\ast\taup^{(m_3)})(z-u) \leq p \tripf \sum_{u,z} \triangle^{(6)}(\orig,u,z)
				=\omtwo, \]
as $l_2+m_1 \geq 5$.

Lastly, let $l_1 \neq 0 \neq l_2$. If $l_3 \geq 5$, then
	\al{p^3 \sum_{u,t,z,x}\jeq(u) & \taup^{(l_1)}(z) \taup^{(l_2)}(t-u) \triangle^{(l_3)}(t-z,x-z,\orig) \\
		& = p^3 \sum_{t,z} \triangle^{(l_3)} (t,z,\orig) (\taup^{(l_1)}\ast\jeq\ast\taup^{(l_2)})(z-t) \leq p \trip \sum_{t,z} \triangle^{(6)}(t,z,\orig) = \omtwo. }
If $l_3 \leq 4$, then $l_1+l_2 \geq 4$. We bound
	\al{p^3 \sum_{u,t,z,x}\jeq(u) & \taup^{(l_1)}(z) \taup^{(l_2)}(t-u) \triangle^{(l_3)}(t-z,x-z,\orig) \leq p^2 \tripf (\jeq\ast\taup^{(l_2)}\ast\taup\ast\taup^{(l_1)})(\orig)  \\
		& \leq \tripf \big( p^4 (\jeq^{\ast 3} \ast \taup^{\ast 3})(\orig) \big) = \omtwo, }
where we used the same sequence of bounds as in~\eqref{eq:prelim:l_triangle_bound}.
\end{proof}

Lastly, we state an observation that appears enough times throughout the arguments of Section~\ref{sec:exp:bounds_on_Pis} for us to extract and state it here.

\begin{observation} \label{obs:exp:pivotality}
Let $a\in\Zd$. Let further $u\neq v$ be two neighbors of $a$, and set $t=v+u-a$. Then
	\[ E'(u,v;\{a\}) \cap \big( \{t=a\} \cup \{t \text{ is vacant}\} \big) \subseteq \{u \lconn{4} v\}.  \] 
\end{observation}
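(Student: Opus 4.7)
The plan is to combine the bipartite structure of $\Zd$ with the no-pivotal clause built into $E'(u,v;\{a\})$. Since $u$ and $v$ are distinct neighbours of $a$, a short coordinate check gives $|u-v|=2$, places $u$ and $v$ in the same bipartite class of $\Zd$ (so every path from $u$ to $v$ has even length, either exactly $2$ or at least $4$), and identifies the common neighbours of $u$ and $v$ as precisely $a$ and $t=u+v-a$, which coincide exactly when $u$ and $v$ are antipodal around $a$.

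On the event $\{t=a\}\cup\{t\text{ vacant}\}$ the vertex $t$ is never available as an interior vertex of a length-$2$ path, so any such path must pass through $a$ and in particular requires $a$ to be occupied. Since $E'(u,v;\{a\})\subseteq\{u\longleftrightarrow v\}$, at least one path from $u$ to $v$ exists, and I split on the status of $a$. If $a$ is vacant, no length-$2$ path is available at all and the existing path automatically has length $\geq 4$, so $u\lconn{4}v$ holds. If $a$ is occupied, then $u$--$a$--$v$ is a length-$2$ path, and I must still produce a second path not using $a$ as an interior vertex.

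To do so I argue by contradiction: were $u$--$a$--$v$ the only path from $u$ to $v$, then $a$ would be pivotal for $\{u\longleftrightarrow v\}$ in the sense of Definition~\ref{def:prelim:extended_connection_stuff}. Because $a\in\thinn{\{a\}}$, the event $\{u\throughconn{\{a\}}a\}$ collapses to $\{u\longleftrightarrow a\}$, which is the full sample space by the convention that neighbouring sites are always connected. This violates the clause of $E'(u,v;\{a\})$ demanding that no pivotal $u'\in\piv{u,v}$ satisfies $u\throughconn{\{a\}}u'$. Hence a further path must exist, and by the parity observation its length is at least $4$, giving $u\lconn{4}v$. The main delicacy is purely definitional: correctly unpacking $\throughconn{}$ when the target lies in $\thinn{\{a\}}$. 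Once this reduces to the trivial event $\{u\longleftrightarrow a\}$, the pivotality contradiction is immediate and the remainder is a short case split.
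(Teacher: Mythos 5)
Your proof is correct and takes essentially the same route as the paper's: case-split on the status of $a$, use the parity of $\Zd$ together with the unavailability of $t$ to exclude length-$2$ paths avoiding $a$, and when $a$ is occupied deduce from the no-pivotal clause of $E'$ (via $a\in\thinn{\{a\}}$, so $u\throughconn{\{a\}}a$ holds trivially) that a second, necessarily longer, $u$-$v$-path must exist. The added detail on enumerating the common neighbours of $u$ and $v$ and on unpacking $\throughconn{}$ is accurate but not a genuinely different argument.
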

\begin{proof}
Let $A= \{a\}$. We know that $E'(u,v;A) \subset \{u \longleftrightarrow v\}$. If $a$ is vacant, then the shortest possible $u$-$v$-path that may be occupied is of length $4$ and the claim holds.

On the other hand, if $a$ is occupied, then $\{u \longleftrightarrow v\}$ holds. However, $\{u \throughconn{A} a\}$ also holds, and so for $E'(u,v;A)$ to hold, $a$ cannot be a pivotal vertex. But in order for $a$ not to be pivotal, there needs to be a second $u$-$v$-path, avoiding $a$. But either $t$ is vacant, or $t=a$; in both cases, a second $u$-$v$-path must be of length at least $4$, proving the claim.
\end{proof}

\section{Detailed analysis of the first three lace-expansion coefficients} \label{sec:exp:bounds_on_Pis}

\subsection{Analysis of $\widehat\Pi^{(0)}$}
We recall that we write $\widehat\Pi^{(i)} = \widehat\Pi_{p_c}^{(i)}(\orig)$. We will also abbreviate $\p=\p_{p_c}$ and $\tau=\tau_{p_c}$ throughout Section~\ref{sec:exp:bounds_on_Pis}. We use~\eqref{eq:exp:Omega_pc_first_identity} a lot throughout Section~\ref{sec:exp:bounds_on_Pis}, and we recall that it states
	\[ \Omega p_c= 1+ \omone\]
and follows from Proposition~\ref{thm:prelim:convergence_of_LE}. Moreover, we will use~\eqref{eq:exp:tau_l_bound} of Lemma~\ref{lem:exp:l_step_connections} frequently in the proofs to follow and will not mention every time we do so.

\begin{lemma}[Finer asymptotics of $\widehat\Pi^{(0)}$] \label{lem:exp:Pi0_finer}
As $d\to\infty$,
	\[ \widehat\Pi^{(0)} =  \tfrac 12 \Omega^2 p_c^2 + \tfrac 52 \Omega^{-1} + \omtwo.\]
\end{lemma}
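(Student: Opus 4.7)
The plan is to start from $\widehat\Pi^{(0)} = \sum_x \p(\orig \Longleftrightarrow x) - \widehat J(0)$, with $\widehat J(0) = \Omega$. Because $\{\orig \Longleftrightarrow x\} = \{0,1\}^{\Zd}$ for all $\Omega$ neighbours of the origin (as noted after Definition~\ref{def:prelim:connection_events}), those $|x|=1$ terms supply an exact $\Omega$ that cancels $-\widehat J(0)$, leaving $\widehat\Pi^{(0)} = \sum_{|x|\geq 2} \p(\orig \Longleftrightarrow x)$. For $|x| \geq 2$, every cycle through $\orig$ and $x$ has even length $\geq 4$, so I would decompose
\begin{equation*}
\widehat\Pi^{(0)} = \sum_x \p(\orig \lcyc{=4} x) + \sum_x \p(\orig \lcyc{=6} x) + \sum_x \p(\orig \lcyc{\geq 8} x),
\end{equation*}
where $\sum_x \p(\orig \lcyc{\geq 8} x) \leq \sum_x \p(\orig \lcyc{8} x) = \mathcal O(\Omega^{-2})$ by~\eqref{eq:exp:cycle_triangle_relation} with $l=4$.

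For the length-$4$ cycle term, the only configurations admitting a cycle of length four from $\orig$ are $x = \pm e_i \pm e_j$ with $i \neq j$, for which the event reduces to both length-$2$ neighbours being open and thus has probability $p_c^2$. Summing over the $\Omega(\Omega-2)/2$ such $x$ gives $\tfrac12 \Omega^2 p_c^2 - \Omega p_c^2$, and Proposition~\ref{thm:prelim:convergence_of_LE}'s bound $\Omega p_c = 1 + \mathcal O(\Omega^{-1})$ turns $\Omega p_c^2$ into $\Omega^{-1} + \mathcal O(\Omega^{-2})$. For the length-$6$ cycle term, only $|x| \in \{2,3\}$ can contribute to order $\Omega^{-1}$; I would split into three sub-cases. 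When $|x|=3$ has three distinct $\pm 1$ coordinates, direct enumeration of the six length-$3$ paths from $\orig$ to $x$ reveals exactly nine interior-disjoint unordered pairs, so Bonferroni yields $\p(\orig\lcyc{=6} x) = 9 p_c^4 + \mathcal O(p_c^5)$, and summation over the $8 \binom{d}{3}$ such $x$ contributes $\tfrac32 \Omega^{-1} + \mathcal O(\Omega^{-2})$ via $\Omega^3 p_c^4 = \Omega^{-1} + \mathcal O(\Omega^{-2})$. When $|x|=2$ has $x = \pm e_i \pm e_j$, a shortest cycle $=6$ requires exactly one of the two length-$2$ neighbours open plus a disjoint length-$4$ path; a count via Observation~\ref{obs:prelim:J_convolutions} gives $2\Omega-8$ length-$4$ walks from $\orig$ to $x$ avoiding both neighbours as interior vertices, so $\p(\orig\lcyc{=6} x) \approx 4 \Omega p_c^4$ and the summation contributes $2 \Omega^{-1} + \mathcal O(\Omega^{-2})$. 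All remaining configurations ($x = \pm 2 e_i$, $|x|=3$ of type $\pm(2e_i + e_j)$, and $x = \pm 3 e_i$) contribute only $\mathcal O(\Omega^{-2})$ after summation.

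Adding everything gives
\begin{equation*}
\widehat\Pi^{(0)} = \tfrac12 \Omega^2 p_c^2 - \Omega^{-1} + \tfrac32 \Omega^{-1} + 2 \Omega^{-1} + \mathcal O(\Omega^{-2}) = \tfrac12 \Omega^2 p_c^2 + \tfrac52 \Omega^{-1} + \mathcal O(\Omega^{-2}),
\end{equation*}
as claimed. The main obstacle is controlling the Bonferroni corrections in the union-bound estimates for ``exists disjoint length-$4$ path'' and ``exists interior-disjoint pair of length-$3$ paths'': distinct short walks can share interior vertices, so I must verify that the pairwise co-occurrence probabilities, once summed over distinct walk-pairs and over the $\mathcal O(\Omega^k)$ relevant $x$, remain $\mathcal O(\Omega^{-2})$. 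This works because any two distinct such walks share strictly fewer vertices than a full interior, producing at least one extra factor of $p_c$ per intersection, which combined with $(\Omega p_c)^k = 1 + \mathcal O(\Omega^{-1})$ beats the apparent $\Omega^k$ count; the mechanical bookkeeping of coordinate-type cases for $|x| \in \{2,3\}$ is delicate and is where the real work of the proof lies.
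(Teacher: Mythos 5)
Your proposal follows essentially the same strategy as the paper's proof: cancel the neighbour terms against $\widehat J(0)$, partition $\{\orig \Longleftrightarrow x\}$ for $|x|\ge 2$ by shortest cycle length into $\{4, 6, \ge 8\}$, dispatch the tail via~\eqref{eq:exp:cycle_triangle_relation} with $l=4$, and enumerate the $4$-cycle and $6$-cycle contributions by case on $|x|$ and $\|x\|_\infty$, obtaining the same counts ($\tfrac12\Omega(\Omega-2)$ four-cycles of probability $p_c^2$; nine internally disjoint path-pairs for $|x|=3$, $\|x\|_\infty=1$; one occupied and one vacant length-$2$ neighbour plus $2(\Omega-4)$ length-$4$ paths for $|x|=2$, $\|x\|_\infty=1$). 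The inclusion--exclusion corrections you flag as the remaining work are exactly the two-sided Bonferroni bounds the paper carries out in~\eqref{eq:exp:Pi0_6cycles_cubes} and~\eqref{eq:exp:Pi0_6cycles_remaining}.
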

\begin{proof}
Recall that $\widehat{\Pi}^{(0)} = \sum_{x} \p(\orig \Longleftrightarrow x) - \jeq(x)$. This sum only gets contributions from $|x| \geq 2$. Now,
	\al{\widehat\Pi^{(0)}= \sum_{|x|\geq 2}\p(\orig \Longleftrightarrow x) &= \sum_{|x|=2}\p \big(\orig \lcyc{\leq 4} x\big) + \sum_{|x| \leq 3}\p\big(\orig \lcyc{= 6} x\big) + \sum_{|x|\geq 2} \p\big(\orig \lcyc{\geq 8} x\big) \\
		& = \sum_{|x|=2}\p \big(\orig \lcyc{\leq 4} x\big) + \sum_{|x| \leq 3}\p\big(\orig \lcyc{= 6} x\big) + \omtwo,  }
where the last identity is due to Lemma~\ref{lem:exp:l_step_connections}. We first consider $4$-cycles. The only points $x$ with $|x| \geq 2$ that can form a $4$-cycle with the origin are those with $|x|=2, \|x\|_\infty=1$. There are $\tfrac 12 \Omega(\Omega-2)$ such points. If $x=v_1+v_2$ (with $|v_i|=1$) is such a point, then $\{\orig \lcyc{\leq 4} x\}$ holds if and only if $\{v_1, v_2\} \subseteq \omega$. Therefore,
	\eqq{ \sum_{|x| \geq 2} \p \big(\orig \lcyc{\leq 4} x\big) = \tfrac 12 \Omega(\Omega-2) p_c^2 = \tfrac 12 \Omega^2 p_c^2 - \Omega^{-1} + \omtwo. \label{eq:exp:Pi0_4cycles}}
We are left to consider points $|x| \geq 2$ contained in cycles of length $6$ that also contain the origin. Note that this is possible for $|x|\in\{2,3\}$ and $\| x\|_\infty \in\{1,2\}$. We first claim that $\|x\|_\infty=2$ gives a contribution of order $\omtwo$.

Indeed, there are $\Omega$ points $x$ with $|x| = 2$ and $\|x\|_\infty=2$, and any such point is contained in at most $c \Omega$ many origin-including cycles of length $6$ (where $c$ is some absolute constant). Any given $6$-cycle has probability $p_c^4$ of being present, and so the contribution is at most $c \Omega^2 p_c^4 = \omtwo$.

Similarly, there are at most $\Omega(\Omega-2)$ points $x$ with $|x|=3, \|x\|_\infty=2$, and any such point is contained in exactly one origin-including cycle of length $6$. Hence, this contributes at most $\Omega^2 p_c^4 = \omtwo$ as well.

Let now $|x|=3, \|x\|_\infty=1$. There are $\tfrac 16\Omega(\Omega-2)(\Omega-4)$ such points. Such a point spans a ($3$-dimensional) cube with the origin, in which two internally disjoint paths of respective length $3$, making up the sought-after $6$-cycle, have to be occupied. There are $9$ such cycles. By inclusion-exclusion,
	\eqq{ \sum_{|x|=3, \|x\|_\infty=1} \p (\orig \lcyc{= 6} x) \begin{cases} \leq \tfrac 96 \Omega^3 p_c^4 = \tfrac 32 \Omega^{-1} + \omtwo, \\
								  \geq \tfrac 16 (\Omega-4)^3 \big[9p_c^4 - \binom{9}{2} p_c^5 \big] = \tfrac 32 \Omega^{-1} + \omtwo   \end{cases}  \label{eq:exp:Pi0_6cycles_cubes}}
(for the lower bound, we sum the probabilities for the 9 cycles to be occupied and substract the probability that at least two of them are occupied at the same time). 
Lastly, consider one of the $\tfrac 12\Omega(\Omega-2)$ points $x=v_1+v_2$ with $|x|=2, \|x\|_\infty=1$, and $|v_i|=1$. Note that there are precisely two paths of length $2$ from $\orig$ to $x$, namely the ones using $v_i$. To produce a relevant contribution to $\{\orig \lcyc{= 6} x\}$, we claim that exactly one of the two vertices must be vacant and the other occupied. Indeed, if both are occupied, then there is a $4$-cycle containing $\orig$ and $x$. If both are vacant, then the shortest possible cycle containing $\orig$ and $x$ is of length $8$.

We assume $v_1$ to be occupied and $v_2$ to be vacant (the reverse gives the same contribution by symmetry, and we respect it with a factor of $2$). It remains to count the number of paths of length $4$ from $\orig$ to $x$ that avoid $v_1$ and $v_2$. Avoiding $\pm v_i$ gives $\Omega-4$ options for the first step. There are two options for the second step (namely, to a neighbor of $v_1$ or $v_2$). Steps $3$ and $4$ are now fixed: Out of the two shortest paths to $x$, one is via $v_i$, and is not an option. In conclusion, the probability that there is a $\orig$-$x$-path of length $4$ traversing some fixed neighbor of $\orig$ (which is not $\pm v_i$) first is $p_c^2 (2p_c-p_c^2)$. This gives
	\eqq{ \sum_{|x|=2, \|x\|_\infty=1} \p (\orig \lcyc{= 6} x) \begin{cases} \leq  \tfrac 12 \Omega^3 4 p_c^4 = 2 \Omega^{-1} + \omtwo, \\
				\geq (\Omega-4)^3 p_c^3 (2p_c - p_c^2) - 4\Omega^2 p_c \binom{\Omega-4}{2} p_c^6 = 2 \Omega^{-1} + \omtwo,\end{cases} \label{eq:exp:Pi0_6cycles_remaining}.}
Summing up~\eqref{eq:exp:Pi0_4cycles},~\eqref{eq:exp:Pi0_6cycles_cubes}, and~\eqref{eq:exp:Pi0_6cycles_remaining} finishes the proof.
\end{proof}

\subsection{Analysis of $\widehat\Pi^{(1)}$}
\begin{lemma}[Finer asymptotics of $\widehat\Pi^{(1)}$] \label{lem:exp:Pi1_finer}
As $d\to\infty$,
	\[ \widehat\Pi^{(1)} =  \Omega p_c + 2\Omega^2 p_c^2 + 4 \Omega^{-1} + \omtwo.\]
\end{lemma}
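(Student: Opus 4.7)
The plan is to follow the template of Lemma~\ref{lem:exp:Pi0_finer}: unfold the definition, partition the sum by the local geometry of $(\orig,u,x)$, evaluate the finitely many dominant configurations exactly, and bound the rest by $\omtwo$ using the diagrammatic estimate of Lemma~\ref{lem:prelim:diagrammatic_bound_Pi1} together with the $l$-step estimates of Section~\ref{sec:exp:bounds_l_connection}.

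Starting from
\[\widehat\Pi^{(1)} = p_c \sum_{u,x\in\Zd} \p\bigl(\{\orig \Longleftrightarrow u\}_0 \cap E'(u,x;\C_0)_1\bigr),\]
the leading contributions come from $|u|=1$, where $\{\orig \Longleftrightarrow u\}$ holds almost surely and $\orig\in\C_0\subseteq\thinn{\C_0}$. A useful first observation is that for $|u-x|=1$ the direct edge has no internal vertices, so $\{u \throughconn{\C_0} x\}=\{x\in\thinn{\C_0}\}$ and $\piv{u,x}=\varnothing$; hence $E'(u,x;\C_0)=\{x\in\thinn{\C_0}\}$. Three sub-cases then produce both $\Omega p_c$ and $2\Omega^2 p_c^2$: (i) $x=\orig$, which is automatic and yields $\Omega p_c$ exactly; (ii) $x=u+v$ for a unit vector $v\neq\pm u$, where the cheapest mechanism is $v$ occupied in $\omega_0$ (so $v\in\C_0$ and $x\in\thinn{\C_0}$), yielding $\Omega(\Omega-2)p_c^2$; (iii) $|x|=1$ with $x\perp u$, where the length-$2$ path $u\to\orig\to x$ fails $E'$ because $\orig$ is a through-connected pivotal, while the parallel path $u\to u+x\to x$ survives provided $u+x$ is occupied in $\omega_1$ and $x$ is vacant in $\omega_0$ (so that $u+x\notin\thinn{\C_0}$), yielding another $\Omega(\Omega-2)p_c(1-p_c)p_c$.

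Summing (i)--(iii) gives $\Omega p_c+2\Omega^2 p_c^2$ up to explicit cubic-in-$p_c$ remainders. The net $+4\Omega^{-1}$ correction is then assembled from a short list of sub-leading sources: the $|u|=2$ double-connection configurations (with $u=v_1+v_2$, both $v_i$ occupied in $\omega_0$, and $x\in\{v_1,v_2\}$), each contributing $\Omega(\Omega-2)p_c^3\approx\Omega^{-1}$; the completion of case (iii) with both $\orig$ and $u+x$ occupied in $\omega_1$ (two disjoint length-$2$ paths, no pivotal); and a handful of length-$4$ path configurations in the same $6$-cycle spirit as in the proof of Lemma~\ref{lem:exp:Pi0_finer}. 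Keeping track of all $(\Omega-2)$-factors, $(1-p_c)$-factors, and the explicit $p_c$-expansions should yield exactly $+4\Omega^{-1}$ modulo $\omtwo$.

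All remaining configurations are $\omtwo$ by Lemma~\ref{lem:prelim:diagrammatic_bound_Pi1}: every sub-triangle beyond the minimal skeleton is $\O(\Omega^{-1})$ by Proposition~\ref{thm:prelim:triangle_bounds}, and each extra edge in a $\taup^{(l)}$-type factor costs another $\Omega^{-1}$ by Lemma~\ref{lem:exp:l_step_connections}. The main obstacle, as case (iii) illustrates, is the pivotal condition inside $E'$: the most obvious short paths from $u$ to $x$ pivot on vertices ($\orig$ or elements of $\thinn{\C_0}$) that are through-connected from $u$, so they fail $E'$ and the surviving mechanisms are subtler than the cycle counts of $\widehat\Pi^{(0)}$. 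Identifying which length-$2$ paths contribute and which $\omega_0$-configurations disable the pivotal-blocking requires a careful joint analysis of $\omega_0$ (which determines $\C_0$) and $\omega_1$ (which carries the $u$-$x$ path), and this is where the bulk of the technical work lies.
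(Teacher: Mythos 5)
Your approach is essentially the same one the paper takes: reduce to $|u|\in\{1,2\}$ via the diagrammatic bound, note that $E'(u,x;\C_0)=\{x\in\thinn{\C_0}\}$ whenever $|u-x|=1$, and exhaust the remaining $(u,x)$-configurations. The leading contributions you identify in (i)--(iii) are correct, and your observation about $\orig$ being a through-connected pivotal forcing the $u$-$x$ connection off the axis is exactly the mechanism the paper isolates.

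However, there is a genuine gap in the accounting of the $\Omega^{-1}$-level terms, which is where the entire content of the constant $+4$ lies. Your sum (i)+(ii)+(iii) already carries a sizeable $\Omega^{-1}$ deficit: both (ii) and (iii) come with an $(\Omega-2)$-factor rather than $\Omega$, and (iii) additionally carries $(1-p_c)$, so their combined subleading contribution is $-5\Omega^{-1}+\omtwo$, not ``cubic remainders'' as you suggest. This deficit must then be compensated by roughly $+9\Omega^{-1}$ from the cases you treat only summarily, and your enumeration of those sources is both incomplete and partly wrong. The paper's case table shows the compensating contributions are: the length-$4$ path for $|u|=1,|x|=1$ ($+\Omega^{-1}$), the $|u|=1,|x|=2$ cases with $x-u$ vacant in $\omega_0$ ($+2\Omega^{-1}$) and with $|u-x|=3$ ($+\Omega^{-1}$), the $|u|=1,|x|=3$ case ($+2\Omega^{-1}$), and three separate $|u|=2$ cases with $|x|\in\{1,2,3\}$ ($+\Omega^{-1}$ each). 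You explicitly mention only the $|u|=2,|x|=1$ configuration, and your ``completion of case (iii) with both $\orig$ and $u+x$ occupied'' is actually $\omtwo$, not $\Omega^{-1}$: allowing $v=u+x$ to be non-pivotal via $\orig\in\omega_1$ only changes the factor $(1-p_c)$ to $1-(1-p_c)p_c$, and the difference is $\Omega(\Omega-2)p_c^4=\omtwo$. Without the full list and the precise prefactor of each term, the claim that the corrections ``should yield exactly $+4\Omega^{-1}$'' cannot be verified; the hard part of the lemma is precisely this arithmetic, and it is left undone.

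A secondary point worth flagging: several of the $\Omega^{-1}$-level contributions (notably $|u|=1,|x|=3$ and parts of $|u|=1,|x|=2$) require a delicate joint analysis of which $\omega_0$-sites must be vacant to avoid doubly-counted $4$-cycles, and which $\omega_1$-paths of length $3$ or $4$ avoid both pivotal-blocking and $\thinn{\C_0}$. The paper uses Observation~\ref{obs:exp:pivotality} and tailored bounds like~\eqref{eq:exp:tau_l_bound} and~\eqref{eq:exp:double_triangle_bound} to dispose of the non-contributing sub-cases one by one; a generic appeal to ``the same $6$-cycle spirit as $\widehat\Pi^{(0)}$'' does not suffice because the $E'$-event couples two configurations, so the combinatorics is genuinely different from the $\widehat\Pi^{(0)}$ count.
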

\begin{proof}
 Abbreviating $\C_0 = \widetilde\C^{u}(\orig;\omega_0)$, we recall that
	\eqq {\widehat\Pi^{(1)} = p_c \sum_{u\in\Zd}\sum_{x\in\Zd} \p \big( \{\orig \Longleftrightarrow u\}_0 \cap E'(u,x; \C_0)_1\big). \label{eq:exp:Pi1_definition}}
While this is a double sum over all points in $\Zd$, we first prove that only small values of $u$ give relevant contributions. To this end, assume that $|u| \geq 3$. We use the pictorial representation of the bound in Lemma~\ref{lem:prelim:diagrammatic_bound_Pi1} and decompose it in terms of modified triangles introduced in Definition~\ref{def:prelim:modified_two-point_functions}. In the below pictorial diagrams, points over which the supremum is taken (in particular, those points are \emph{not} summed over) are represented by colored disks. The indicator that two such points (disks) may not coincide is represented by a disrupted two-sided arrow. Lemma~\ref{lem:prelim:diagrammatic_bound_Pi1} together with Proposition~\ref{thm:prelim:triangle_bounds} then gives
	\algn{ 
	&\hspace{-5em}p_c \sum_{|u|\ge3}\sum_{x\in\Zd} \p \big( \{\orig \Longleftrightarrow u\}_0 \cap E'(u,x; \C_0)_1\big) \notag \\
		& \leq p_c \sum \mathds 1_{\{|u|\geq 3\}} \Big( p_c \mathrel{\raisebox{-0.25 cm}{\includegraphics{N1_exp_A_general_labels.pdf}}}
				\ + \mathrel{\raisebox{-0.25 cm}{\includegraphics{N1_exp_B_general_labels.pdf}}} \Big) \notag\\
		& \leq \sum \Big( \mathds 1_{\{|u|\geq 3\}} \mathrel{\raisebox{-0.25 cm}{\includegraphics{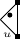}}}
			\Big( \sup_{\textcolor{darkorange}{\bullet}, \textcolor{blue}{\bullet}} p_c \sum \mathrel{\raisebox{-0.25 cm}{\includegraphics{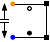}}}
			\Big( \sup_{\textcolor{green}{\bullet}, \textcolor{altviolet}{\bullet}} p_c \sum \mathrel{\raisebox{-0.25 cm}{\includegraphics{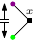}}} \Big)\Big)\Big) \notag \\
		& \qquad + \sum \Big( \mathds 1_{\{|u|\geq 3\}} \mathrel{\raisebox{-0.25 cm}{\includegraphics{N1_exp_A_bound1.pdf}}}
			\Big( \sup_{\textcolor{green}{\bullet}, \textcolor{altviolet}{\bullet}} p_c \sum \mathrel{\raisebox{-0.25 cm}{\includegraphics{N1_exp_A_bound3.pdf}}} \Big)\Big)
			+   p_c \sum \mathds 1_{\{|u|\geq 3\}} \mathrel{\raisebox{-0.25 cm}{\includegraphics{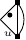}}} \notag\\
		& \leq \big(\triangle_{p_c}^{\bullet\circ} \triangle_{p_c}^{\bullet} + \triangle_{p_c}^{\bullet} + p_c \big)
				\sum \mathds 1_{\{|u|\geq 3\}} \mathrel{\raisebox{-0.25 cm}{\includegraphics{N1_exp_A_bound1.pdf}}} \notag \\
		& \leq \omone \Big(\sum_u \p (\orig \lcyc{6} u) + p_c \sum_{u,w} \triangle^{(6)}(u,w,\orig)  \Big) = \omtwo, \label{eq:exp:Pi1_diag_bound} }
where the last identity is due to Lemma~\ref{lem:exp:l_step_connections}. When we encounter similar diagrams to the ones in~\eqref{eq:exp:Pi1_diag_bound} at later stages of this paper, we decompose them in the same way as performed in~\eqref{eq:exp:Pi1_diag_bound}, but in less detail.

We consider the cases of $|u| \in \{1,2\}$ separately. For both, we make further case distinctions according to the value of $|x|$. The contributions are summarized in the following table:
\begin{center}
\begin{tabular}{ c||c|c|c|c } 
 $\widehat{\Pi}^{(1)}$:  & $x=\orig$ & $|x|=1$ & $|x|=2$ & $|x|=3$ \\ 
 \hline \hline
 \rule{0pt}{3ex} $|u|=1$ & $\Omega p_c$ & $\Omega^2 p_c^2 - 2 \Omega^{-1}$ & $\Omega^2 p_c^2 + \Omega^{-1}$ & $2\Omega^{-1}$ \\
 $|u|=2$ &  & $\Omega^{-1}$ & $\Omega^{-1}$ & $\Omega^{-1}$ \\ 
\end{tabular}
\end{center}

\paragraph{Contributions of $|u|=1$.} By rotational symmetry, we can drop the sum over $u$, and rewrite~\eqref{eq:exp:Pi1_definition} as
	\begin{align} & \ p_c \sum_{|u|=1}\sum_{x\in\Zd} \p \big( \{\orig \Longleftrightarrow u\}_0 \cap E'(u,x; \C_0)_1 \big) \notag \\
			= & \ p_c \sum_{u,x\in\Zd} \jeq(u) \p \left( E'(u,x; \C_0)_1 \right) \label{eq:exp:Pi1_u1_A}\\
			= & \Omega p_c  \sum_{x\in\Zd} \p \left( E'(u,x; \C_0)_1 \right)  \label{eq:exp:Pi1_u1_B}. \end{align}
In~\eqref{eq:exp:Pi1_u1_B} and in the following, we take $u$ to be an arbitrary (but fixed) neighbor of the origin. We recall that $\omega_i$ is a sequence of independent percolation configurations and an event with subscript $i$ takes place on $\omega_i$. Moreover, $E'(u,x; \C_0)$ is indexed to take place on configuration $1$, which is only accurate if $\C_0$ is regarded as a fixed set; otherwise the event takes place on $\omega_0$ and $\omega_1$.

We proceed by splitting the sum over $x$ in~\eqref{eq:exp:Pi1_u1_B} (respectively,~\eqref{eq:exp:Pi1_u1_A}) into different cases. 
	
\underline{The case of $|u|=1, x=\orig$ contributes $\Omega p_c$:} The event $E'(u,x;\C_0)_1$ in~\eqref{eq:exp:Pi1_u1_B} holds, the sum collapses to $1$, and the contribution is $\Omega p_c$.

\underline{The case of $|u|=1=|x|$ contributes $\Omega^2p_c^2 - 2\Omega^{-1} + \omtwo$:} There are $\Omega-1$ choices for $x\neq u$. We exclude the special case $x=-u$ first. For other choices of $x$, we let $v:=x+u$.
\begin{compactitem}
\item For $x=-u$, we have $E'(u,x;\C_0)_1 \subseteq \{u \lconn{4} x\}_1$ by Observation~\ref{obs:exp:pivotality}. Hence,~\eqref{eq:exp:Pi1_u1_B} is bounded by
	\[ \Omega p_c \tau^{(4)}(u-x) = \omtwo.\]
\item Let $x \neq \pm u$ and $v\in \omega_1$, so that $\orig,u,v,x$ span a ``square'' in one of the hyperplanes. Note first that there are $\Omega-2$ choices for $x$, and we can treat them equally by symmetry. Since $x\sim \orig$ and hence $x\in\thinn{\C_0}$, we have that on the event $\{v\in\omega_1\}$, the occurrence of $E'(u,x;\C_0)_1$ implies that either $v$ is  not pivotal for $\{u\longleftrightarrow x\}$, or it is pivotal but $v\notin\thinn{\C_0}$:
	\begin{equation*}\label{eq:EprimevExp}
	E'(u,x;\C_0)_1 \cap\{v \in\omega_1\}_1 = \{v \in\omega_1\}_1 \cap \Big( \{v \notin \thinn{\C_0}\}_0 \cup \{v \notin\piv{u,x}\}_1 \Big).
	\end{equation*}
Note that all three appearing events on the right are independent of each other. 
Recalling that $\C_0$ is shorthand for $\tilde\C^u(0,\omega_0)$, we see that $\{0\leftrightarrow v\}_0$ if either $x\in\omega_0$ or if there is an occupied path of length $\ge 4$ in $\omega_0\setminus\{u\}$: 
	\[\p (v \notin \thinn{\C_0}) =1- \p\big( x \in\omega_0\big) - \p\big(\orig \lconn{\geq 4} v \text{ in } \omega_0\setminus\{u\}\big) = 1-p_c+\omtwo. \]
In order for $v$ to be \emph{not} pivotal, there must be a ``second connection'' from $u$ to $x$, either a short one via $\orig$, or via a longer path; that is, 
	\[	 \p(v \notin\piv{u,x}) = \p(\orig \in \omega_1) + \p\big(u \lconn{\geq 4} x \text{ in } \omega_1\setminus\{v\}\big) = p_c + \omtwo. \]
We can now replace the sum over $x$ in \eqref{eq:exp:Pi1_u1_B} by a factor of $(\Omega-2)$ and thus obtain the contribution 
	\al{ \Omega p_c (\Omega-2) \p \big( E'(u,x;\C_0)_1 \cap \{v \in\omega_1\}_1 \big) &= \Omega(\Omega-2) p_c^2 \Big( 1-p_c+p_c - (1-p_c)p_c \Big) + \omtwo \\
			&= (\Omega p_c)^2 (1-p_c) - 2\Omega p_c^2 + \omtwo \\
			&= \Omega^2 p_c^2 - 3 \Omega^{-1} + \omtwo. }
\item Let $x \neq \pm u, v \notin \omega_1$, and $\orig \notin \omega_1$. For $E'(u,x;\C_0)_1$ to hold, there needs to be a $\omega_1$-path between $u$ and $x$. Its pivotal points cannot lie in $\thinn{\C_0}$ however. First, note that any relevant path between $u$ and $x$ is of length $4$, as
	\[ \Omega p_c (\Omega-2) \p\big(E'(u,x;\C_0)_1 \cap \{ u \lconn{\geq 6} x\}_1\big) \leq \Omega^2 p_c \tau^{(6)}(x-u) = \omtwo. \]
We now investigate the $4$-paths from $u$ to $x$ that avoid $\orig$ and $v$---from Lemma~\ref{lem:exp:Pi0_finer}, we already know that there are $2(\Omega-4)$ of them. Let $z$ be one of the $\Omega-4$ unit vectors satisfying $\dim \gen{u,x,z} =3$, where we let $\gen{\cdot}$ denote the span. We denote by $\gamma_1$ and $\gamma_2$ the two $u$-$x$-paths of length 4 that visit $y_1:=u+z$. W.l.o.g., $\gamma_1$ visits $y_2:=y_1+x$ second and $y_3:=y_2-u$ third, whereas $\gamma_2$ visits $z$ second and $y_3$ third. Let $\{\gamma_i \subseteq \omega_1\}$ denote the event that the $3$ internal vertices of $\gamma_i$ are $\omega_1$-occupied. See Figure~\ref{fig:exp:u1_a} for an illustration.

\begin{figure}
     \centering
     \begin{subfigure}[b]{0.3\textwidth}
         \centering
         \includegraphics[scale=1.5]{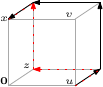}
	\caption{The case $|x|=1$.}
         \label{fig:exp:u1_a}
     \end{subfigure}
     \hfill
     \begin{subfigure}[b]{0.3\textwidth}
         \centering
         \includegraphics[scale=1.5]{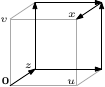}
	\caption{The case $|x|=2, u\sim x$.}
         \label{fig:exp:u1_b}
	\end{subfigure}     
     \hfill
     \begin{subfigure}[b]{0.3\textwidth}
         \centering
         \includegraphics[scale=1.5]{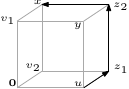}
	\caption{The case $|x|=2, |u-x|=3$.}
         \label{fig:exp:u1_c} 
     \end{subfigure}
     \vspace{0.1cm} \newline 
     \begin{subfigure}[b]{0.3\textwidth}
         \centering
         \includegraphics[scale=1.5]{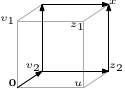}
	\caption{The case $|x|=3$.}
         \label{fig:exp:u1_d}
      \end{subfigure} 
        \caption{An illustration of several appearing cases for $|u|=1$. In the first two cases, $\orig$ and $v$ are vacant in $\omega_1$. In case (a), the black path is $\gamma_1$, the red and dotted one is $\gamma_2$. In case (b), the two $\orig$-$x$-paths are marked as black chains of arrows. In case (c), $\{v_1, v_2\} \cap \omega_0 = \{v_1\}$ and the only relevant $u$-$x$-path is marked in black.}
        \label{fig:exp:u1}
\end{figure}

We now show that only $\gamma_1$ produces a relevant term. Assume first that $y_2 \notin\omega_1$, but $\gamma_2\subseteq\omega_1$. For $E'(u,x;\C_0)_1$ to hold, $z\in\thinn{\C_0}$ must not be a pivotal point. Under $\gamma_2 \subseteq \omega_1$,
	\eqq{ \{ z \notin \piv{u,x} \}_1 \subseteq 
	\bigcup_{a\in\{u,y_1\}, b\in\{y_3, x\} } \{a \longleftrightarrow b \text{ in } \omega_1 \setminus \{z\}\}_1. 
	\label{eq:exp:Pi1_u1_x_1_z_piv}}
Resolving the right-hand side of~\eqref{eq:exp:Pi1_u1_x_1_z_piv} by a union bound gives four connection events. The shortest $\omega_1$-path from $u$ to $x$ of non-vacant vertices is of length 4. Moreover, the shortest $\omega_1$-path from $y_1$ to $y_3$ of non-vacant vertices that avoids $z$ is of length $4$ as well, and so~\eqref{eq:exp:Pi1_u1_B} is bounded by
	\al{\Omega (\Omega-2) p_c \sum_z & \p\Big(E'(u,x;\C_0)_1, \{\orig,v,y_2\}\cap\omega_1 = \varnothing, \gamma_2\subseteq\omega_1 \Big)  \\
		& \leq \Omega^3  p_c^4 \Big( \tau^{(4)}(x-u) + \tau^{(3)}(y_3-u) + \tau^{(3)}(x-y_1) + \tau^{(4)}(y_3-y_1)\Big) = \omtwo.	}
We now show that $\gamma_1\in\omega_1$ gives a contribution. Note that under $\{\orig,v\notin\omega_1, \gamma_1 \subseteq \omega_1\}$, 
	\eqq{ E'(u,x;\C_0)_1 = \bigcap_{i\in\{1,2,3\}} \Big( \{y_i \notin\piv{u,x}\}_1 \cup \{y_i \notin\thinn{\C_0}\}_0\Big) \label{eq:exp:Pi1_u1_x1_gamma1_path}}
But $\p(\{y_i \notin\piv{u,x}\}_1 \cup \{y_i \notin\thinn{\C_0}\}_0) \geq 1-\p(y_i \in \thinn{\C_0}) \geq 1-\tau^{(2)}(y_i) =1-\mathcal O(\Omega^{-1})$ for all $i$ by Lemma~\ref{lem:exp:l_step_connections}, and so, by inclusion-exclusion,
	\al{\Omega (\Omega-2) p_c \sum_z & \p\Big(E'(u,x;\C_0)_1, \{\orig,v\}\cap\omega_1 = \varnothing, \gamma_1\subseteq\omega_1 \Big)  \\
		& \begin{cases} \leq \Omega(\Omega-2)(\Omega-4) (1-p_c)^2 p_c^4 \big(1-\O(\Omega^{-1}) \big) = \Omega^{-1} + \omtwo, \\ 
			 \geq \Omega^3 p_c^4 (1-\omone) - \Omega^2 \binom{\Omega-4}{2} p_c^7 = \Omega^{-1} + \omtwo. \end{cases}	}
\item Let $x \neq \pm u, v\notin\omega_1$, and $\orig\in \omega_1$. By Observation~\ref{obs:exp:pivotality},
	\[ \Omega p_c (\Omega-2) \p \big( E'(u,x;\C_0)_1 \cap \{v \notin\omega_1,\orig\in\omega_1\} \big) \leq \Omega^2 p_c^2 \tau^{(4)}(x-u) = \omtwo. \]
\end{compactitem}

\underline{The case of $|u|=1, |x|=2$ contributes $\Omega^2 p_c^2 +\Omega^{-1} + \omtwo$:}	There are $\tfrac 12 \Omega^2$ choices for $x$. We first consider the $\Omega-1$ choices neighboring $u$ and, among those, exclude the special case $x=2u$ first. For $x$ a neighbor of $u$, we set $v:=x-u$.
\begin{compactitem}
\item Let $x=2u$. Since $x \sim u$, we have $E'(u,x;\C_0)_1 = \{x \in \thinn{\C_0}\}_0 \subseteq \{ \orig \lconn{4} x \}_0$, and so the contribution to~\eqref{eq:exp:Pi1_u1_B} is bounded by $\Omega p_c \tau^{(4)}(x) = \omtwo$.
\item Let $2u\neq x\sim u$ and $v\in \omega_0$. There are $\Omega-2$ choices for $x$. The event $E'(u,x;\C_0)_1$ holds, and so
	\[ \Omega p_c  \sum_{2u\neq x\sim u} \E_0\left[ \mathds 1_{\{v\in\omega_0\}}\p_1\left( E'(u,x; \C_0) \right)  \right] = \Omega(\Omega-2) p_c^2 = \Omega^2 p_c^2 - 2\Omega^{-1} + \omtwo.\]
\item Let $2u\neq x\sim u$ and $v\notin \omega_0$. We partition
	\[ E'(u,x;\C_0)_1 = \Big( E'(u,x;\C_0)_1 \cap \{\orig \lconn{\leq 4} x \text{ in } \Zd\setminus\{u\} \}_0 \Big) \cup \Big(E'(u,x;\C_0)_1 \cap \{\orig \lconn{\geq 6} x \text{ in } \Zd\setminus\{u\}\}_0\Big) \]
and treat the second event by observing
	\[ \Omega p_c  \sum_{2u\neq x\sim u} \p \Big( \{v\notin\omega_0\}_0 \cap \{\orig \lconn{\geq 6} x \text{ in } \Zd\setminus\{u\}\}_0 \cap E'(u,x; \C_0)_1 \Big) 
					\leq \Omega^2 p_c \tau^{(6)}(x) = \omtwo.\]
As the only $2$-paths from $\orig$ to $x$ go through $u$ and $v$ respectively, we can focus on paths of length $4$ avoiding $v$ and $u$. Hence, the status of $v$ is independent of such paths. Let $z$ be one of the $\Omega-4$ neighbors of $\orig$ with $\dim \gen{u,v,z} =3$. For any such $z$, there are two $\orig$-$x$-paths of length $4$ that first visit $z$ and avoid $\{v,u\}$. More precisely, these paths are $(\orig,z,u+z,x+z,x)$ and $(\orig,z,v+z,x+z,x)$. Let $Q_4(z)$ denote the event that at least one of these paths is in $\omega_0$. See Figure~\ref{fig:exp:u1_b} for an illustration. As the events $\{ Q_4(z) \}$ are pairwise independent,
	\al{\{v\notin\omega_0\}_0 \cap  \{\orig \lconn{\leq 4} x \text{ in } \Zd\setminus\{u\}\}_0 \cap E'(u,x; \C_0)_1 &= \{v\notin\omega_0\}_0 \cap \big( \cup_{z} Q_4(z)\big) , \\
		\p(\cup_z Q_4(z)) = (\Omega-4)\p( Q_4(z)) +\O(\Omega^{-4}) &= 2(\Omega-4)p_c^3 + \O(\Omega^{-3}). }
Consequently,
	\al{\Omega p_c  \sum_{2u\neq x\sim u} \p & \Big( \{v\notin\omega_0\}_0 \cap \{\orig \lconn{\leq 4} x \text{ in } \Zd\setminus\{u\}\}_0 \cap E'(u,x; \C_0)_1 \Big) \\
			& = \Omega p_c (\Omega-2)2(\Omega-4) p_c^3 + \omtwo = 2\Omega^{-1} + \omtwo.}
\item Let $|u-x|=3$ and $\|x\|_\infty =2$. There are $\Omega-1$ choices for $x$. Let $2v = x$. Note first that
	\al{ \Omega(\Omega-1)p_c \p \Big( &\big(\{x \in \thinn{\C_0}\}_0 \cup \{ u \lconn{5} x\}_1 \big) \cap E'(u,x;\C_0)_1 \Big) \\
			& \leq \Omega^2 p_c \Big( \tau^{(2)}(x) \tau^{(3)}(x-u) + \tau^{(5)}(x-u)\Big)  = \omtwo.   }
The complementary event is that $x \notin\thinn{\C_0}$ and the presence of a $u$-$x$-path of length $3$. The former implies $v\notin\omega_0$. There are at most four potential sites that can make up internal vertices on a $u$-$x$-path of length $3$, namely $\orig, v, u+v, u+2v$. To avoid potential pivotality of $\orig$ and $v$ and still guarantee a path of length $3$, we require $\{v+u, v+2u\} \subseteq \omega_1$. But both these vertices are of distance at least $2$ from the origin, and at least one of them must be in $\thinn{\C_0}$. In conclusion, 
	\al{ \Omega(\Omega-1)p_c \p \Big( &\{x \notin \thinn{\C_0}\}_0 \cap \{ u \lconn{\leq 3} x\}_1 \cap E'(u,x;\C_0)_1 \Big) \\
			& \leq 2 \Omega^2 p_c \tau^{(2)}(u+v) \tau^{(3)}(x-u) = \omtwo.   }
\item Let $|u-x|=3, \| x\|_\infty=1$, and $x\in\thinn{\C_0}$. Write $x=v_1+v_2$, where $|v_i|=1$. We first show that contributions arise when precisely one point in $\{v_1, v_2\}$ is $\omega_0$-occupied. Note that when both $v_1$ and $v_2$ are vacant in $\omega_0$, the contribution to~\eqref{eq:exp:Pi1_u1_B} is bounded by $\Omega^3 p_c \tau^{(4)}(x) \tau^{(3)}(x-u) = \omtwo$. On the other hand, if $\{v_1,v_2\} \subseteq \omega_0$, then the contribution is bounded by $\Omega^3 p_c^3 \tau^{(3)}(u-x) =\omtwo$.

Let now $v_1\in\omega_0$ and $v_2\notin\omega_0$ (the other case is identical and is respected by counting the contribution twice). There are $\tfrac 12 \Omega^2 (1+\O(\Omega^{-1}))$ choices for $x$. If $\{u \lconn{5} x\}_1$, then the contribution to~\eqref{eq:exp:Pi1_u1_B} is $\omtwo$. Set $z_1=u+v_2, z_2=u+v_2+v_1$, and set $y=u+v_1$. We claim that the only $u$-$x$-path of length 3 that produces a relevant contribution is $(u,z_1,z_2,x)$. See Figure~\ref{fig:exp:u1_c} for an illustration.

First, assume $z_1 \notin \omega_1$. Note that the only other paths of length $3$ from $u$ to $x$ go through either $\orig$ or $y$. But $\{0,y\} \subseteq \thinn{\C_0}$, and so neither $\orig$ nor $y$ can be a pivotal point. Hence, $E'(u,x;\C_0)_1\cap \{z_1\notin \omega_1\}$ enforces $\{\orig,y\} \subseteq \omega_1$. To get to $x$ and avoid pivotality of any points in $\thinn{\C_0}$, at least two points in $\{v_1, v_2, z_1\}$ must be occupied, and the contribution to~\eqref{eq:exp:Pi1_u1_B} is at most
	\[ 2 \Omega p_c \big(\tfrac 12 \Omega^2(1+\omone)\big) p_c^2 \binom{3}{2} p_c^2 = \omtwo.\]
If $z_1\in \omega_1$ and $z_2 \notin\omega_1$, then the only $u$-$x$-path of length $3$ through $z_1$ visits $v_2\in\thinn{\C_0}$. This gives a contribution of $\omtwo$ by the same bound as above. We may turn to the case $z_i \in\omega_1$ for $i\in\{1,2\}$. Now, under $\{v_1\in\omega_0, \{z_1, z_2\} \subseteq \omega_1\}$, we can express $E'(u,x;\C_0)_1$ similarly to~\eqref{eq:exp:Pi1_u1_x1_gamma1_path}, replacing $y_i$ ($i\in[3]$) by $z_i$ ($i \in[2]$). Applying the same bounds, we obtain a contribution to~\eqref{eq:exp:Pi1_u1_B} of
	\[ 2\Omega p_c \big( \tfrac 12 \Omega^2 (1+\omone) \big) \p\big(v_1\in \omega_0, \{z_1, z_2\} \subseteq\omega_1 \big) (1-\omone) = \Omega^{-1} + \omtwo.\]
\item Let $|u-x|=3,\| x\|_\infty=1$, and $x \notin\thinn{\C_0}$. Let $\gamma$ be a $u$-$x$-path in $\omega_1$. By assumption, there needs to be some $z\in\gamma$ with $z \in\thinn{\C_0}$. Consequently, $z$ cannot be a pivotal point and so there needs to be another $u$-$x$-path $\tilde \gamma$ in $\omega_1$ that contains a point $\tilde z \notin \gamma$ with $\tilde z\in\thinn{\C_0}$. 
Assume first that both $\gamma, \tilde \gamma$ are paths of length $3$. If they are disjoint, then the contribution to~\eqref{eq:exp:Pi1_u1_B} is at most $9\Omega^3 p_c^5 = \omtwo$. If they share their first vertex, then, in the terminology of Figure~\ref{fig:exp:u1_c}, it must be either $y$ or $z_1$ (otherwise $\orig$ is pivotal). W.l.o.g., $\tilde\gamma$ must then pass through $z_2$ and so $\tilde z=z_2 \in \thinn{\C_0}$ needs to hold, and the contribution to~\eqref{eq:exp:Pi1_u1_B} is at most $\Omega^3 p_c^4 \tau^{(3)}(z_2) = \omtwo$. Assume next that $\tilde\gamma$ is of length $5$. As $\gamma$ and $\tilde\gamma$ share at most one internal vertex (and there are two internal vertices in $\gamma$), we count a factor of $p_c$ for the unique vertex of $\gamma$, and the contribution to~\eqref{eq:exp:Pi1_u1_B} is at most $18 \Omega^3 p_c^2 \tau^{(5)}(x-u) = \omtwo$. Similarly, when both $\gamma$ and $\tilde\gamma$ are of length at least $5$, the contribution is $\omtwo$.
\end{compactitem}

\underline{The case of $|u|=1, |x|=3$ contributes $2\Omega^{-1} + \omtwo$:} Note that when $\{u \lconn{4} x\}_1$, then the contribution to~\eqref{eq:exp:Pi1_u1_A} is at most
	\eqq{ p_c \sum_{u,x} \triangle^{(8)} (u,x,\orig) + p_c^2 \sum_{u,t,z,x} \dtrial^{(9)} (u,t,z,x) = \omtwo \label{eq:exp:Pi1_u1_x3_ux4}}
by Lemma~\ref{lem:exp:l_step_connections}. We can therefore focus on $x$ with $|x-u|=2$ and $\{u \lconn{ = 2} x\}_1$. Moreover, we can assume that there is no $u$-$x$-path of length $4$. Let $x=u+v_1+v_2$, where $|v_1|=1=|v_2|$, and assume first that $\dim\gen{u,v_1,v_2}=3$. There are $\tfrac 12 (\Omega-2)(\Omega-4)$ choices for $x$. Let $z_i=u+v_i$ be the two internal vertices of the two shortest $u$-$x$-paths---see Figure~\ref{fig:exp:u1_d} for an illustration.

We first claim that only $x\in\thinn{\C_0}$ produces a relevant contribution. Indeed, if $x \notin\thinn{\C_0}$, and as there is no $u$-$x$-path of length $4$, we must have $z_i \in \omega_1 \cap \thinn{\C_0}$ for $i \in\{1,2\}$. For $\{\orig \longleftrightarrow z_i\}_0$ to hold, either $v_i \in\omega_0$, or $\{ \orig \lconn{4} z_i \}_0$, and so~\eqref{eq:exp:Pi1_u1_B} is at most
	\al{ \Omega^3 p_c \p  & \Big(\{\{z_1,z_2\} \subseteq \omega_1\} \cap \big(\{\{v_1,v_2\} \subseteq \omega_0\} \cup \{ \orig \lconn{4} z_1\}_0 \cup \{\orig \lconn{4} z_2\}_0 \big) \Big) \\
		& = \Omega^3 p_c^3 \big( p_c^2 + \tau^{(4)}(z_1) + \tau^{(4)}(z_2) \big) = \omtwo.}
Turning to $x\in \thinn{\C_0}$, note that when $\{z_1, z_2\} \subseteq \omega_1$, then~\eqref{eq:exp:Pi1_u1_B} is at most 
	\[\Omega^3 p_c \p \big( \{\orig \longleftrightarrow x\}_0 \cap \{\{z_1, z_2\} \subseteq \omega_1\}\big) = \Omega^3 p_c^3 \tau^{(3)}(x)=\omtwo. \]
W.l.o.g., we assume that $z_1\in \omega_1$ (and $z_2 \notin \omega_1$) and (by symmetry) count the contribution twice. Now, the contribution to~\eqref{eq:exp:Pi1_u1_B} is equal to
	\eqq{ \Omega(\Omega-2)(\Omega-4) p_c \p \Big(\{x \in \thinn{\C_0}\}_0 \cap \{z_2\notin \omega_1\ni z_1 \}_1 \cap \big( \{z_1 \notin \thinn{\C_0}_0 \cup \{z_1 \notin \piv{u,x}\}_1 \big)   \Big).
				\label{eq:exp:Pi1_u1_x3_ux2}}
If $v_1 \in\omega_0$, then $z_1\in\thinn{\C_0}$ and so $z_1$ cannot be pivotal, which, in turn, forces $\{u \lconn{4} x\}_1$. But this was already shown to produce an $\omtwo$ contribution. Further, if $\{\orig \lconn{5} x\}_0$, then~\eqref{eq:exp:Pi1_u1_x3_ux2} is at most $\Omega^3 p_c^2 \tau^{(5)}(x) = \omtwo$, and so $\orig$ must be $\omega_0$-connected to $x$ by a path of length $3$.

There are precisely two $\orig$-$x$-paths of length 3 that use neither $v_1$ nor $u$, namely $\gamma_1=(\orig,v_2,v_1+v_2,x)$ and $\gamma_2=(\orig,v_2,z_2,x)$. If both are occupied, the contribution is $\omtwo$. Note that
	\[ \p ( z_1 \notin \thinn{\C_0} \mid \gamma_i\subseteq \omega_0 ) \geq 1- 3 \tau^{(2)}(z_1) = 1-\omone, \]
and so~\eqref{eq:exp:Pi1_u1_x3_ux2} becomes
	\[ \Omega^3 (1-\omone) p_c \p \Big( \big( \cup_{i=1,2} \{ \gamma_i\subseteq \omega_0\}_0 \big), z_1 \in\omega_1 \Big) = 2\Omega^3 p_c^4 (1-\omone) = 2\Omega^{-1} + \omtwo.\]
Finally, if $\dim\gen{u,v_1,v_2} \leq 2$, then the same bounds with at least one factor of $\Omega$ in the choice of $x$ gives a contribution of $\omtwo$.

\underline{The case of $|u|=1, |x| \geq 4$ contributes $\omtwo$:}  The bound is the same as in~\eqref{eq:exp:Pi1_u1_x3_ux4}.

\paragraph{Contributions of $|u|=2$.} If $u$ is one of the $\Omega$ points with $|u|=2=\| u \|_\infty$, then $\widehat\Pi^{(1)}$ is bounded by $\Omega p_c\sum_x \p(\orig \Longleftrightarrow u) \taup(u-x)$. For fixed $j=|u-x|$, this is bounded by
	\[ \Omega^{1+j} p_c \tau^{(2)}(u) \tau^{(4)}(u) \tau^{(j)}(x-u) = \omtwo.\]

We now show that we can impose some further restrictions on $u$ and $x$. Recall the bound in \eqref{eq:exp:Pi1_diag_bound}, and observe that if $x\notin\thinn{\C_0}$, then
	\[ p_c \sum_{|u|=2} \sum_{x} \p \big( \{\orig \Longleftrightarrow u\}_0 \cap \{x\notin\thinn{\C_0}\}_0 \cap E'(u,x;\C_0)_1 \big)
			\leq p_c^2 \sum \mathds 1_{\{|u|=2\}} \mathrel{\raisebox{-0.25 cm}{\includegraphics{N1_exp_A_general_labels.pdf}}} \ = \omtwo. \]
Similar considerations enforce that $|x| \leq 3$ and $|x-u| \leq 2$ as well as $\{\orig \lcyc{\leq 4} u\}_0$. Before going into the different cases, we note that there are $\tfrac 12\Omega(\Omega-2)$ choices for $u=v_1+v_2$ (where $|v_i|=1$), and on every choice, $\{v_1,v_2\} \subseteq \omega_0$ need to  hold for a relevant contribution to arise. Taking all this into consideration, the contribution to $\widehat\Pi^{(1)}$ becomes
	\eqq{ \tfrac 12 \Omega(\Omega-2) p_c^3 \sum_{x\in\Zd} \mathds 1_{\{|x| \leq 3, |u-x|\leq 2\}} \p \Big( \{x\in\thinn{\C_0}\}_0 \cap E'(u,x;\C_0)_1
						 \mid \{v_1,v_2\}\subseteq \omega_0 \Big),  \label{eq:exp:Pi1_u2_general}}
where $v_1$ and $v_2$ is a pair of arbitrary but fixed independent unit vectors (and $u=v_1+v_2$).

\underline{The case of $|u|=2, x=\orig$ contributes $\omtwo$:} As $|u-x|=2$, the contribution to~\eqref{eq:exp:Pi1_u2_general} is at most $\Omega^2 p_c^3 \tau^{(2)}(x-u) = \omtwo$.

\underline{The case of $|u|=2, |x|=1$ contributes $\Omega^{-1} + \omtwo$:} Note that we only need to consider $x \in\{v_1,v_2\}$ (otherwise $|u-x| =3$). For these choices of $x$, both $x\in\thinn{\C_0}$ and $E'(u,x;\C_0)_1$ hold and the contribution to~\eqref{eq:exp:Pi1_u2_general} is as claimed.

\underline{The case of $|u|=2, |x|=2$ contributes $\Omega^{-1} + \omtwo$:} By the indicator in~\eqref{eq:exp:Pi1_u2_general}, we only consider $|x-u|=2$. Let first $\|x\|_\infty=2$. There are only two such points at distance $2$ of $u$, and so the contribution to~\eqref{eq:exp:Pi1_u2_general} is at most $\Omega^2 p_c^3 \tau^{(2)}(x-u) = \omtwo$.

Let thus $x$ be one of the $2(\Omega-3)$ points with $\|x\|_\infty=1$. W.l.o.g., we assume that $x=v_1+v_3$, where $|v_3|=1$. If $v_3=-v_2$, then the contribution is bounded by $\Omega^2 p_c^3 \tau^{(2)}(x-u) = \omtwo$. Let $x$ be one of the remaining $2(\Omega-4)$ points with $\dim\gen{v_1,v_2,v_3} =3$. As $x \sim v_1$, the event $x\in\thinn{\C_0}$ holds. We partition $E'(u,x;\C_0)_1$ into whether $\{u \lconn{=2} x\}_1$ or $\{u \lconn{\geq 4} x\}_1$ and see that in the latter case, the contribution to~\eqref{eq:exp:Pi1_u2_general} is at most $\Omega^3 p_c^3 \tau^{(4)}(x-u) = \omtwo$.

For the existence of a path of length $2$, either $v_1$ or $v_4:=x+v_2$ need to be $\omega_1$-occupied. As $v_1\in\C_0$, it cannot be a pivotal point for the $\omega_1$-connection between $u$ and $x$ and there needs to be another path. The contribution to~\eqref{eq:exp:Pi1_u2_general} is therefore at most $\Omega^3 p_c^4 \tau^{(2)}(x-u) = \omtwo$. We observe that
	\[E'(u,x;\C_0)_1 \cap \{v_4\in \omega_1\} = \{v_4\in\omega_1\} \cap \Big(\{v_4\notin\piv{u,x}\}_1 \cup \{\orig \centernot\longleftrightarrow v_4 \text{ in } \Zd\setminus\{u\}\}_0  \Big). \]
As previously, $\p(v_4\notin\piv{u,x}) = \O(\Omega^{-1})$ and $\p(\orig \centernot\longleftrightarrow v_4 \text{ in } \Zd\setminus\{u\}) = 1-\O(\Omega^{-1})$, and so the contribution to~\eqref{eq:exp:Pi1_u2_general} is
	\[\Omega^3(1-\O(\Omega^{-1})) p_c^4 (1+\O(\Omega^{-1})) = \Omega^{-1} + \omtwo.\]
\underline{The case of $|u|=2, |x|=3$ contributes $\Omega^{-1} + \omtwo$:} We only need to consider neighbors of $u$, otherwise $|u-x|\geq 3$. Recall that for $|u-x|=1$, the event $E'(u,x;\C_0)_1$ holds precisely when $x \in\thinn{\C_0}$. Under our conditioning, $x$ must be connected to $\{0,v_1,v_2\}$. Note that there are two choices for $x$ with $\|x\|_\infty=2$. Since $\p(x\in\thinn{\C_0}) \leq 3 \max_{y\in\{0,v_1,v_2\}} \tau^{(2)}(x-y)) = \omone$, we may focus on the $\Omega-2$ choices of $x$ with $\|x\|_\infty=1$.

Let $x=u+v_3$ and set $z_1:=v_1+v_3, z_2:=v_2+v_3$. If $\{z_1,z_2\} \cap \omega_0 = \varnothing$, then $\{\orig \lconn{5} x\}_0$ holds, and the contribution to~\eqref{eq:exp:Pi1_u2_general} is at most $\Omega^3p_c^3 \max_{y\in\{0,v_1,v_2\}} \tau^{(3)}(x-y) = \omtwo$. If $\{z_1,z_2\} \subset \omega_0$, then the contribution to~\eqref{eq:exp:Pi1_u2_general} is at most $\Omega^3 p_c^5 = \omtwo$.

We consider the case where $z_1\notin\omega_0\ni z_2$ and respect the other case with a factor of $2$. The contribution to~\eqref{eq:exp:Pi1_u2_general} is
	\[\Omega^3(1+\O(\Omega^{-1}) p_c^4 (1+\O(\Omega^{-1})) = \Omega^{-1} + \omtwo.\]
This finishes the analysis of $\widehat\Pi^{(1)}$.
\end{proof}

\subsection{Analysis of $\widehat\Pi^{(2)}$}
\begin{lemma}[Asymptotics of $\widehat\Pi^{(2)}$] \label{lem:exp:Pi2_finer}
As $d\to\infty$,
	\[ \widehat\Pi^{(2)} =  10 \Omega^{-1} + \omtwo.\]
\end{lemma}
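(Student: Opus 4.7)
The identity to analyse is
\[
\widehat\Pi^{(2)} = p_c^2 \sum_{u,v,x\in\Zd} \p\big(\{\orig\Longleftrightarrow u\}_0\cap E'(u,v;\C_0)_1\cap E'(v,x;\C_1)_2\big),
\]
with $\C_0=\widetilde\C^u(\orig;\omega_0)$ and $\C_1=\widetilde\C^v(u;\omega_1)$. The prefactor is $p_c^2=\Omega^{-2}+\omtwo$, so producing a term of order $\Omega^{-1}$ requires the triple sum to contribute at order $\Omega$ at leading order. My plan mirrors the proof of Lemma~\ref{lem:exp:Pi1_finer}: first reduce the analysis to short-range configurations via the diagrammatic bounds of Lemma~\ref{lem:prelim:diagrammatic_bound_Pi2}, and then enumerate the small geometries explicitly.

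For the reduction, the five diagrams in~\eqref{eq:prelim:Pi2_diagrammatic_bound_A} carry prefactors $p_c^3$ through $p_c^5$; after taking suprema along inner vertices in the spirit of~\eqref{eq:exp:Pi1_diag_bound}, each collapses into a product of two or three triangles (all $\mathcal O(\Omega^{-1})$ by Proposition~\ref{thm:prelim:triangle_bounds}) times an ``outer'' factor depending on the free vertices $u,v,x$. The latter is bounded via Lemma~\ref{lem:exp:l_step_connections}: imposing any one of $|u|\ge 3$, $|v-u|\ge 3$ or $|x-v|\ge 3$ forces a long cycle or a $\dtrial^{(\ge 9)}$-type factor, and hence an $\omtwo$ contribution via~\eqref{eq:exp:cycle_triangle_relation} and~\eqref{eq:exp:double_triangle_bound}. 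The bound~\eqref{eq:prelim:Pi2_diagrammatic_bound_B} with prefactor $p_c^2$ covers the fully collapsed case $\{v\in\thinn\C_0\}\cap\{x\in\thinn\C_1\}$ and is the only term capable of producing an $\Omega^{-1}$ contribution.

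Restricted to $|u|=|v-u|=|x-v|=1$, the events simplify drastically: $\{\orig\Longleftrightarrow u\}_0$ holds trivially, $E'(u,v;\C_0)_1$ collapses to $\{v\in\thinn\C_0\}_0$ (pivotality is vacuous between neighbours), and likewise $E'(v,x;\C_1)_2$ collapses to $\{x\in\thinn\C_1\}_1$. Since the sum of two unit vectors has $L^1$-norm $0$ or $2$, I would split over the three cases $v=\orig$, $v=2u$ and $v=u+w$ for $|w|=1,\,w\ne\pm u$, and within each split further over the position of $x=v+w'$. The dominant contributions arise when $\{v\in\thinn\C_0\}$ is realised via a unique ``closest'' neighbour of $v$ that is $\omega_0$-occupied and directly adjacent to $\orig$ (a factor $p_c$), and similarly for $\{x\in\thinn\C_1\}$ via a $\omega_1$-occupied neighbour of $x$ adjacent to $u$; each such factor $p_c$ combines with a combinatorial factor of $\Omega$ from the choice of direction and with the $p_c^2$ prefactor to produce an $\Omega^{-1}$ contribution. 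Sub-cases with $|u|$, $|v-u|$ or $|x-v|$ equal to $2$ must also be analysed, either by showing they contribute $\omtwo$ or by extracting their leading-order piece, with the total finally summing to $10\Omega^{-1}$.

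The main obstacle is twofold. First, $u$ plays opposite roles in $\C_0$ (excluded) and $\C_1$ (seeding), so the approximation $\p(v\in\thinn\C_0)\cdot\p(x\in\thinn\C_1)\approx p_c^k$ has to be justified case by case, with particular care when a neighbour of $v$ or $x$ used to realise the event also plays a role in the other cluster. Inclusion--exclusion among the possible ``closest-neighbour'' pathways must be controlled at second order to avoid double-counting at the $\Omega^{-1}$ scale. Second, once $|v-u|=2$ or $|x-v|=2$, the pivotality clause in $E'$ is no longer vacuous and must be handled via Observation~\ref{obs:exp:pivotality} together with the $l$-step bound~\eqref{eq:exp:tau_l_bound} of Lemma~\ref{lem:exp:l_step_connections}, exactly as in the $|u|=1$, $|x|=2$ analysis of Lemma~\ref{lem:exp:Pi1_finer}. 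Beyond these two difficulties the proof is mechanical but lengthy, following the bookkeeping template of Lemma~\ref{lem:exp:Pi1_finer}.
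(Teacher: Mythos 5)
Your outline follows the same architecture as the paper's proof: first invoke the diagrammatic bounds of Lemma~\ref{lem:prelim:diagrammatic_bound_Pi2} together with Proposition~\ref{thm:prelim:triangle_bounds} and Lemma~\ref{lem:exp:l_step_connections} to kill the long-range configurations, then enumerate the short-range geometries. That reduction is sound and matches the paper.

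However, what you have written is a plan, not a proof, and the part that is missing is precisely the part that produces the coefficient $10$. You never actually carry out the enumeration, and your framing of the short-range regime is subtly off in a way that would cause you to overlook contributions. You organize the analysis around the ansatz $|u|=|v-u|=|x-v|=1$ and say that in that regime both $E'$-events collapse to "$\thinn{\cdot}$"-membership (which is correct for adjacent endpoints), but the paper's bookkeeping is organized around $|v|\in\{0,1,2\}$ crossed with $|x|\in\{0,1,2,3\}$, and a substantial fraction of the final answer comes from configurations where $|v-u|=2$ or $|x-v|=2$. Concretely, the entire row $|v|=1$ in the paper's table has $|v-u|=2$ (since $u,v$ are two distinct unit vectors with $v\ne \pm u$), yet it contributes $\Omega^{-1}+2\Omega^{-1}+\Omega^{-1}=4\Omega^{-1}$, i.e.~nearly half of the total. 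Similarly, several of the $\Omega^{-1}$ terms come from $|x-v|=2$ where the pivotality clause in $E'(v,x;\C_1)$ is genuinely nontrivial and must be handled via Observation~\ref{obs:exp:pivotality} and the $l$-step bounds. You flag these as "sub-cases to be analysed, either $\omtwo$ or contributing," but you do not resolve them, and without resolving them there is no way to obtain $10\Omega^{-1}$ rather than, say, $6\Omega^{-1}$. In short: the strategy is the paper's, the two technical obstacles you identify are real, but the proposal stops exactly where the substantive work begins, so it does not establish the stated asymptotics.
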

\begin{proof}
For the proof, we recall that
	\eqq{ \widehat\Pi^{(2)} = p_c^2 \sum_{u,v,x\in\Zd} \p\Big( \{\orig \Longleftrightarrow u\}_0 \cap E'(u,v;\C_0)_1 \cap E'(v,x;\C_1)_2 \Big), \label{eq:exp:pitwo_def}}
where $\C_0= \widetilde{\C}^{u}(\orig;\omega_0)$ and $\C_1=\widetilde{\C}^v(u;\omega_1)$. We first show that when either $v\notin\thinn{\C_0}$ or $x\notin\thinn{\C_1}$, then the contribution to $\widehat{\Pi}^{(2)}$ is $\omtwo$. Indeed, by Lemma~\ref{lem:prelim:diagrammatic_bound_Pi2} and Proposition~\ref{thm:prelim:triangle_bounds},
	\begin{align}p_c^2 \sum_{u,v,x\in\Zd} \p & \Big( \{\orig \Longleftrightarrow u\}_0 \cap E'(u,v;\C_0)_1 \cap E'(v,x;\C_1)_2 \cap\big(\{v\notin\thinn{\C_0}\}_0 \cup \{x\notin\thinn{\C_1}\}_1 \big) \Big)  \notag\\
		& \leq p_c^2 \sum \Big( p_c^3 \mathrel{\raisebox{-0.25 cm}{\includegraphics{N2_exp_11.pdf}}} 
				+ p_c^2 \mathrel{\raisebox{-0.25 cm}{\includegraphics{N2_exp_12.pdf}}} \notag\\
		& \qquad + p_c^2 \mathrel{\raisebox{-0.25 cm}{\includegraphics{N2_exp_21.pdf}}}
				+ p_c \mathrel{\raisebox{-0.25 cm}{\includegraphics{N2_exp_31.pdf}}}
				+ p_c \mathrel{\raisebox{-0.25 cm}{\includegraphics{N2_exp_22.pdf}}} \Big) \label{eq:exp:pitwo_xv_bound_A} \\
		& \leq \sum \mathrel{\raisebox{-0.25 cm}{\includegraphics{N1_exp_A_bound1.pdf}}} 
					\Big(\triangle_{p_c}^{\bullet} (\triangle_{p_c}^{\bullet\circ})^2 \triangle_{p_c} + \triangle_{p_c}^{\bullet} \triangle_{p_c} \triangle_{p_c}^{\bullet \circ}
								 + \triangle_{p_c}^{\bullet} \triangle_{p_c}^{\bullet\circ} \triangle_{p_c}^{\bullet\bullet\circ} \triangle_{p_c}
								 + \triangle_{p_c}^{\bullet}(\triangle_{p_c}^{\bullet\circ})^2 \Big) \notag \\
		& \qquad + p_c^3 \sum \Big(p_c \mathrel{\raisebox{-0.25 cm}{\includegraphics{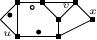}}}
				+ \mathrel{\raisebox{-0.25 cm}{\includegraphics{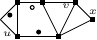}}} \Big) \label{eq:exp:pitwo_xv_bound_B} \\
		& \leq  \O(\Omega^{-3}) \sum \mathrel{\raisebox{-0.25 cm}{\includegraphics{N1_exp_A_bound1.pdf}}} \ = \omtwo.  \notag \end{align}
We expanded the third diagram in~\eqref{eq:exp:pitwo_xv_bound_A} to get the two diagrams of~\eqref{eq:exp:pitwo_xv_bound_B}. We next show that only $|u|=1$ gives a relevant contribution. Indeed,
	\al{p_c^2 \sum_{u,v,x\in\Zd: |u|\geq 2} \p & \Big( \{\orig \Longleftrightarrow u\}_0 \cap E'(u,v;\C_0)_1 \cap E'(v,x;\C_1)_2 \cap \{v\in\thinn{\C_0}\}_0 \cap \{x\in\thinn{\C_1}\}_1 \Big) \\
			& \leq p_c^2 \sum \mathds 1_{\{|u| \geq 2\}} \mathrel{\raisebox{-0.25 cm}{\includegraphics{N2_exp_32.pdf}}}
				\ \leq \triangle_{p_c}^{\bullet}\triangle_{p_c}^{\bullet\circ} \sum \mathds 1_{\{|u| \geq 2\}} \mathrel{\raisebox{-0.25 cm}{\includegraphics{N1_exp_A_bound1.pdf}}} \ = \omtwo.}
We can thus fix $u$ to be an arbitrary neighbor of the origin and need to investigate
	\eqq{ \Omega p_c^2 \sum_{v,x\in\Zd} \p \Big( E'(u,v;\C_0)_1 \cap E'(v,x;\C_1)_2 \cap \{v\in\thinn{\C_0}\}_0 \cap \{x\in\thinn{\C_1}\}_1 \Big). \label{eq:exp:pitwo_reduced_def}}
Before going into specific cases, we exclude some of them right away: When $|x| \vee |u-x| \geq 4$, then the contribution to~\eqref{eq:exp:pitwo_reduced_def} is
	\[ p_c^2 \sum \mathds 1_{\{|x| \vee |u-x| \geq 4\}} \mathrel{\raisebox{-0.25 cm}{\includegraphics{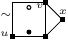}}} \ \leq \sum_{u,t,v,x} \dtrial^{(9)}(u,t,v,x) = \omtwo\]
by Lemma~\ref{lem:exp:l_step_connections}. In the above, a line decorated with a `$\sim$' symbol denotes a direct edge. Similarly, when $|v| \geq 3$ or $|x-v|\geq 3$, the contribution to~\eqref{eq:exp:pitwo_reduced_def} is at most
	\al{ p_c^2 \sum & \mathds 1_{\{|v| \vee |x-v| \geq 3\}} \mathrel{\raisebox{-0.25 cm}{\includegraphics{N2_exp_32_u1.pdf}}}
		 \ \leq  p_c \triangle_{p_c}^\bullet \big(\tau^{(3)}\ast\tau\ast\tau^{\bullet}\ast\jeq \big)(\orig) +
		 			  p_c^2 \sum \mathds 1_{\{|x-v| \geq 3\}} \mathrel{\raisebox{-0.25 cm}{\includegraphics{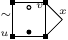}}} \\
		 & \leq p_c \triangle_{p_c}^\bullet \sum_{u,v} \triangle^{(6)}(\orig,u,v) + p_c^4 \big(\jeq^{\ast 3} \ast \tau^{\ast 3}\big)(\orig) 
		 				+ p_c \triangle_{p_c}^{\bullet\circ} \sum_{t,x} \triangle^{(6)}(\orig,t,x) = \omtwo. }
We now investigate~\eqref{eq:exp:pitwo_reduced_def} by splitting the double sum over $v$ and $x$. We organize this by considering the three main cases for $|v|\in\{0,1,2\}$. An overview of the contributions is given in the following table:
\begin{center}
\begin{tabular}{ c||c|c|c|c } 
 $\widehat{\Pi}^{(2)}$:  & $x=\orig$ & $|x|=1$ & $|x|=2$ & $|x|=3$ \\ 
 \hline \hline
 \rule{0pt}{3ex} $v=\orig$ &  & $2 \Omega^{-1}$ & $\Omega^{-1}$ & \\
 $|v|=1$ & $\Omega^{-1}$ &  & $2\Omega^{-1}$ & $\Omega^{-1}$ \\ 
 $|v|=2$ &  & $\Omega^{-1}$ & $\Omega^{-1}$ & $\Omega^{-1}$
\end{tabular}
\end{center}
\paragraph{Contributions of $v=\orig$.} The events $E'(u,v;\C_0)_1$ and $\{v\in \thinn{\C_0}\}$ hold.

\underline{The case of $|x|=1$ contributes $2\Omega^{-1} + \omtwo$:} First, consider the choice of $x=u$. It is easy to see that the event in~\eqref{eq:exp:pitwo_reduced_def} holds and the contribution is $\Omega p_c^2 = \Omega^{-1} + \omtwo$.

Consider $0\sim x\neq u$. As $v\sim x$, we have $E'(v,x;\C_1)_2 = \{x \in \thinn{\C_1}\}_1$. If $x=-u$, then $\{x \in \thinn{\C_1}\}_1 \subseteq \{u \lconn{4} x\}_1$ and we receive a contribution of order $\omtwo$. Consider now one of the $\Omega-2$ remaining choices for $x$ and set $z=u+x$. Then
	\[ \p(x \in \thinn{\C_1}) = \p(z \in\omega_1) + \p(z \notin\omega_1, x \in \thinn{\C_1}) = p_c + \O(\tau^{(4)}(x-u)) = p_c + \omtwo,\]
yielding a contribution to~\eqref{eq:exp:pitwo_reduced_def} of $\Omega(\Omega-2)p_c^3 + \omtwo = \Omega^{-1}+\omtwo$.

\underline{The case of $|x|=2$ contributes $\Omega^{-1} + \omtwo$:} 
If $|u-x|=3$, then the contribution to~\eqref{eq:exp:pitwo_reduced_def} is bounded by $\Omega^3 p_c^2 \tau^{(2)}(x-v) \tau^{(3)}(u-x) =\omtwo$. Similarly, if $x=2u$, we obtain a bound of $\Omega p_c^2 \tau^{(2)}(x-v) = \omtwo$. Let therefore $x$ be one of the $\Omega-2$ remaining neighbors of $u$ and note that $\{x\in\thinn{\C_1}\}$ holds.

We set $z=x-u$. If $z\notin\omega_2$, then $E'(v,x;\C_1)_2 \subseteq \{ v \lconn{4} x\}_2$ by Observation~\ref{obs:exp:pivotality}, and the contribution to~\eqref{eq:exp:pitwo_reduced_def} is at most $\Omega^2 p_c^2 \tau^{(4)}(x-v) = \omtwo$. If $z\in\omega_2$, then $E'(v,x;\C_1)_2 = \{z \notin\piv{v,x}\}_2 \cup \{z \notin\thinn{\C_1}  \}_1$. By a similar argument to the one below~\eqref{eq:exp:Pi1_u1_x1_gamma1_path}, the contribution to~\eqref{eq:exp:pitwo_reduced_def} becomes
	\[ \Omega(\Omega-2) p_c^2 \p \Big(\{z\in \omega_2 \} \cap \big(\{z \notin\piv{v,x}\}_2 \cup \{z \notin\thinn{\C_1}  \}_1\big)\Big) = \Omega^2 p_c^3 (1-\omone) = \Omega^{-1} + \omtwo. \]  

\underline{The case of $|x|=3$ contributes $\omtwo$:} Distinguishing between $|u-x|=4$ (at most $\Omega^3$ choices for $x$) and $|u-x|=2$ (at most $\Omega^2$ choices), the contribution to~\eqref{eq:exp:pitwo_reduced_def} is at most
	\[\Omega p_c^2 \tau^{(3)}(x-v) \big( \Omega^3 \tau^{(4)}(u-x) + \Omega^2 \tau^{(2)}(u-x) \big) = \omtwo.\]

\paragraph{Contributions of $|v|=1$.} Let us first consider $v=-u$ and show that this case contributes $\omtwo$. Indeed, $E'(u,v;\C_0)_1 \subseteq \{u \lconn{4} v\}_1$ by Observation~\ref{obs:exp:pivotality}. With the further inclusion $E'(v,x;\C_1)_2 \cap \{x \in\thinn{\C_1}\} \subseteq \{v \longleftrightarrow x\}_2$, we have that the contribution to~\eqref{eq:exp:pitwo_reduced_def} is at most
	\al{ \Omega p_c^2 \tau^{(4)} &(u-v) \Big( \sum_{|x|=1} \tau^{(2)}(x-v) + \sum_{x: v\sim x} 1 + \sum_{|x|=2,|x-v|=3} \tau^{(3)}(x-v) \\
			& \qquad + \sum_{|x|=3,|x-v|=2} \tau^{(2)}(x-v) +\sum_{|x|=3,|x-v|=4} \tau^{(4)}(x-v) \Big)  \\
			& \leq \O(\Omega^{-3}) \Big(\Omega \O(\Omega^{-1}) + \Omega + \Omega^2 \omtwo + \Omega^2 \O(\Omega^{-1}) + \Omega^3 \O(\Omega^{-3}) \Big) = \omtwo.}
We may therefore take $v\neq \pm u$ to be one of the $\Omega-2$ remaining neighbors of the origin. Set $t=v+u$. We first claim that $t \notin \omega_1$ results in an $\omtwo$ contribution. Note that, by Observation~\ref{obs:exp:pivotality}, $E'(u,v;\C_0)_1 \cap \{t \notin \omega_1\} \subseteq \{u \lconn{4} v\}_1$. As there is only one choice of $x$ such that $u \sim x \sim v$ and at most $\Omega$ choices such that $|x|=3$ and $x \sim v$, we can bound ~\eqref{eq:exp:pitwo_reduced_def} by
	\al{ \Omega^2 p_c^2 \sum_{x\in\Zd} & \Big( \tau^{(4)}(v-u) \big(\mathds 1_{\{x=\orig\}} + \mathds 1_{\{|x|=1\}} \tau^{(2)}(x-v) + \mathds 1_{\{|x|=2, u\sim x \sim v\}}  \\
			& \quad + \mathds 1_{\{|x|=3, |u-x|=2=|v-x|\}} \tau^{(2)}(x-v)  \big) + \mathds 1_{|x|=2, v \sim x} \p \big(E'(u,v;\C_0)_1 \cap \{ t \notin\omega_1 \}  \cap \{x \in \thinn{\C_1}\} \big) \Big) \\
			\leq & \omtwo \big( 2 + 2\Omega \tau^{(2)}(x-v)\big) + \mathcal O(1) \sum_{|x|=2, x \sim v} \p^{(2)} \big(E'(u,v;\C_0)_1 \cap \{ t \notin\omega_1 \} \cap \{x \in \thinn{\C_1}\} \big). }
It remains to bound the last probability. There are at most $\Omega$ choices for $x$. If $\{u \lconn{5} x\}$, then the contribution is $\omtwo$. Note that the $u$-$v$-path in $\omega_1$ cannot use and is independent of the status of $\orig$, as the origin may not be a pivotal point. Hence, if $\orig \in\omega_1$, the contribution is at most $\Omega p_c \tau^{(4)}(v-u) = \omtwo$. We therefore assume $\orig\notin\omega_1$ and aim to bound
	\eqq{ \Omega \p \big(\{u \lconn{4} v\}_1  \cap \{ \orig,t\notin \omega_1 \} \cap \{ u \lconn{\leq 3} x \}_1  \big)  \label{eq:exp:pitwo_v1_t_vacant}}
When avoiding $\orig$ and $t$, there are only two $u$-$x$-paths of length $3$, namely $\gamma_1=(u,y,z,x)$ and $\gamma_2=(u,y,y-u,x)$, where $y:=x+u-v$ and $z:=y+v$. See Figure~\ref{fig:exp:v1_a} for an illustration. But now,~\eqref{eq:exp:pitwo_v1_t_vacant} is bounded by
	\al{ \Omega \p \Big( & \{\orig,t\notin \omega_1\} \cap \bigcup_{i=1,2} \bigcup_{s \in \gamma_i \setminus\{x\} } \{\gamma_i \subseteq \omega_1\} \circ \{s \longleftrightarrow v\}_1 \Big) \\
		& \leq 2 \Omega p_c^2 \big( \tau^{(4)}(v-u) + \tau^{(3)}(y-v) + 2\tau^{(2)}(z-v) \big) = \omtwo. }
As a consequence, we can focus on $t\in\omega_1$, and~\eqref{eq:exp:pitwo_reduced_def} reduces to
	\[ \Omega(\Omega-2) p_c^2 \sum_{x\in\Zd} \p \Big( E'(u,v;\C_0)_1 \cap E'(v,x;\C_1)_2 \cap \{t\in\omega_1, x\in\thinn{\C_1}\}_1 \Big). \]
But under $t \in\omega_1$, we have $E'(u,v;\C_0)_1 = \{t \notin\piv{u,v}\}_1 \cup \{t \notin \thinn{\C_0}\}_0$. The latter event has probability $1-\omone$, and so we can can instead investigate
	\eqq{ \Omega^2 p_c^2 (1-\omone) \sum_{x\in\Zd} \p \Big( E'(v,x;\C_1)_2 \cap \{t\in\omega_1, x\in\thinn{\C_1}\}_1 \Big), \label{eq:exp:pitwo_reduced_v1}}
where $u$ and $v$ are two arbitrary (but fixed) neighbors of $\orig$ (satisfying ($u \neq \pm v$).

\underline{The contribution of $x=\orig$ is $\Omega^{-1} + \omtwo$:} Note that $x \in \thinn{\C_1}$ holds, and so does $E'(v,x;\C_1)_2$. Hence, the contribution to~\eqref{eq:exp:pitwo_reduced_v1} is $\Omega^{-1} + \omtwo$.

\begin{figure}
     \centering
     \begin{subfigure}[b]{0.3\textwidth}
         \centering
         \includegraphics[scale=1.5]{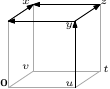}
	\caption{The case $|x|=2, t \notin \omega_1$.}
         \label{fig:exp:v1_a}
     \end{subfigure}
     \hfill
     \begin{subfigure}[b]{0.3\textwidth}
         \centering
         \includegraphics[scale=1.5]{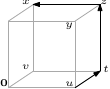}
	\caption{The case $|x|=2, u\nsim x$.}
         \label{fig:exp:v1_b}
	\end{subfigure}     
     \hfill
     \begin{subfigure}[b]{0.3\textwidth}
         \centering
         \includegraphics[scale=1.5]{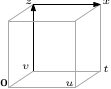}
	\caption{The case $|x|=3$.}
         \label{fig:exp:v1_c} 
     \end{subfigure}
        \caption{An illustration of several appearing cases for $|v|=1$. In (a), the two paths from $u$ to $x$ of length $3$ that avoid $\orig$ and $t$ are drawn. In (b), the path along $t,z$ which ensures $x \in \thinn{\C_1}$ for a contribution of $\Omega^{-1}$ is drawn. In (c), the scenario $|x-u|=2=|v-x|$ is shown, and the path along $z$ ensuring $\{v \longleftrightarrow x\}_2$ is drawn in black.}
        \label{fig:exp:v1}
\end{figure}

\underline{The contribution of $|x|=1$ is $\omtwo$:} If $x \in\{\pm u, -v\}$, we can bound the contribution to~\eqref{eq:exp:pitwo_reduced_v1} by $\Omega^2 p_c^2 \tau^{(2)}(u-v)\tau^{(2)}(x-v) = \omtwo$ (as both $\{v \longleftrightarrow x\}_2$ and $\{u\longleftrightarrow v\}_1$ need to hold). Consider thus one of the $\Omega-4$ choices for $x$ satisfying $\dim\gen{u,v,x}=3$. Conditional on $t \in \omega_1$, we have $\{x\in\thinn{\C_1}\}_1 \subseteq \{u \lconn{2} x\}_1 \cup \{t \lconn{3} x\}_1$, and so the contribution is at most
	\[\Omega^3 p_c^3 \taup^{(2)}(x-v) \big( \taup^{(2)}(x-u) + \taup^{(3)}(x-t) \big) = \omtwo.\]

\underline{The contribution of $|x|=2$ is $2\Omega^{-1}+\omtwo$:} We can restrict to the choices of $x$ where $v\sim x$ by the considerations made in the beginning of the proof.
\begin{compactitem}
\item Let $x \sim u$. There is only one choice for $x$ such that $|u-x|=|v-x|=1$, namely $x=t$. For this choice, $E'(v,x;\C_1)_2$ certainly holds, and also $x\in\thinn{\C_1}$. We get a contribution of $\Omega^{-1}+\omtwo$.
\item Let $x \not\sim u$. There are $\Omega-2$ choices for $x$. We first exclude $x=v-u$. As $\p(x \in \thinn{\C_1} \mid t\in\omega_1) \leq \tau^{(4)}(x-t) + \tau^{(3)}(x-u) = \omtwo$, the contribution in total is $\omtwo$.

Let now $x$ be one of the $\Omega-3$ remaining neighbors of $v$. As $v\sim x$, we have $E'(v,x;\C_1)_2 = \{x \in \thinn{\C_1}\}$. We set $z=x+u$ (see Figure~\ref{fig:exp:v1_b}) and assume first that $z \notin\omega_1$. Then
	\[ \{z\notin\omega_1 \ni t, x\in\thinn{\C_1}\} \subseteq \{z\notin\omega_1 \ni t\} \cap \big(\{u \lconn{3} x \text{ off } \{t\} \cup \{t \lconn{4} x \} \big) \]
and the contribution to~\eqref{eq:exp:pitwo_reduced_v1} is at most $\Omega^2 p_c^3 (1-\omone) (\tau^{(3)}(x-u) + \tau^{(4)}(x-t)) = \omtwo$. On the other hand, if $z\in\omega_1$, then $x \in \thinn{\C_1}$ holds and~\eqref{eq:exp:pitwo_reduced_v1} becomes
	\[ \Omega^2 p_c^2 (1-\omone) (\Omega-3) \p(t,z \in \omega_1) = \Omega^{-1} + \omtwo.\]
\end{compactitem}

\underline{The contribution of $|x|=3$ is $\Omega^{-1} + \omtwo$:} There are at most $\Omega^3$ choices for $x$ such that $|u-x|=|v-x|=4$ and there are at most $2\Omega^2$ choices where $|x-u|\neq |v-x|$. The contribution of those $x$ to~\eqref{eq:exp:pitwo_reduced_v1} is therefore bounded by
	\[\Omega^2 p_c^2  \Big(\sum_{|x|=3} \tau^{(4)}(x-v) \tau^{(4)}(u-x) + 2 \sum_{|x-v|=2\neq |u-x|} \tau^{(2)}(x-v) \tau^{(4)}(u-x) \Big) = \omtwo. \]
It remains to investigate those $x$ with $|u-x| = 2 = |v-x|$. This is only possible when $x\sim t$. Let first $x = 2u+v$. By Observation~\ref{obs:exp:pivotality}, $E'(v,x;\C_1)_2 \cap \{t\in\omega_1\} \subseteq \{v \lconn{4} x\}$, and~\eqref{eq:exp:pitwo_reduced_v1} is at most $\Omega^2 p_c^2 \tau^{(4)}(x-v) = \omtwo$.

Let now $x$ be one of the $\Omega-3$ remaining neighbors of $t$ (note that either $\|x\|_\infty=1$ or $x=2v+u$). We set $z:=x-u$ and point to Figure~\ref{fig:exp:v1_c} for an illustration. As $t$ is occupied in $\omega_1$, we have $x \in\thinn{\C_1}$. Assume now $z\notin\omega_2$. By Observation~\ref{obs:exp:pivotality}, $E'(v,x;\C_1)_2 \subseteq \{ v \lconn{4} x\}_2$ and the contribution to~\eqref{eq:exp:pitwo_reduced_v1} is at most $\Omega^2 p_c^3 \tau^{(4)}(x-v) = \omtwo$. On the other hand, if $z\in\omega_2$,~\eqref{eq:exp:pitwo_reduced_v1} becomes
	\[ (1+\omone) (\Omega-3) \p \big( \{t\in\omega_1, z \in\omega_2\} \cap \big(\{z \notin \thinn{\C_1}\}_1 \cup \{z \notin\piv{v,x}\}_2 \big) \big) = \Omega^{-1} + \omtwo. \]
Again, we have used that $\{z \notin \thinn{\C_1}\}_1$ has probability $1-\omone$ conditional on $t\in\omega_1$.

\paragraph{Contributions of $|v|=2$.} We first show that when $|u-v| =3$, no relevant contributions arise. Indeed, for those $v$,~\eqref{eq:exp:pitwo_def} is at most
	\al{p_c^2 \sum & \mathds 1_{\{|v|=2, |u-v|=3\}} \mathrel{\raisebox{-0.25 cm}{\includegraphics{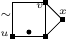}}} 
			\ \leq p_c \sum \Big( \mathds 1_{\{|v|=2, |u-v|=3\}} \mathrel{\raisebox{-0.25 cm}{\includegraphics{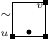}}}
				\Big( \sup_{\textcolor{darkorange}{\bullet}, \textcolor{blue}{\bullet}} 
					p_c \sum \mathrel{\raisebox{-0.25 cm}{\includegraphics{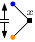}}} \Big)\Big) \\
		& \leq \triangle^\bullet_{p_c} \Big( \sum \mathds 1_{\{|v|=2, |u-v|=3\}} p_c \mathrel{\raisebox{-0.25 cm}{\includegraphics{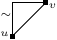}}}
				+ p_c^2 \sum \mathds 1_{\{|v|=2, |u-v|=3\}} \mathrel{\raisebox{-0.25 cm}{\includegraphics{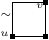}}} \Big) \\
		& \leq \triangle^\bullet_{p_c} \Big( p_c \sum_{u,v} \triangle^{(6)}(u,v,\orig) + 2 p_c^4 \big(\jeq^{\ast 3}\ast\tau^{\ast 3}\big)(\orig) \Big) = \omtwo.  }
Moreover, $v=2u$ implies $\{v\in\thinn{\C_0}\}_0 \subseteq \{\orig \lconn{4} v\}_0$. We can thus bound the contribution to~\eqref{eq:exp:pitwo_reduced_def} by $\Omega p_c \tripf \tau^{(4)}(v) = \omtwo$. Let $v$ be one of the $\Omega-2$ remaining neighbors of $u$, implying $E'(v,u;\C_0)=\{v\in\thinn{\C_0}\}_0$. Let $z=v-u$. Then for $v \in\thinn{\C_0}$ to hold, either $z \in\omega_0$ or there must be a path of length at least $4$. In the latter case, we can bound~\eqref{eq:exp:pitwo_reduced_def} by $p_c^2 \sum_{u,v,t,x} \dtrial^{(9)}(u,t,v,x) = \omtwo$. We can therefore restrict to investigating
	\eqq{ \Omega(\Omega-2) p_c^3 \sum_{x\in\Zd} \p \Big( E'(v,x;\C_1)_2 \cap \{x\in\thinn{\C_1}\}_1 \Big), \label{eq:exp:pitwo_reduced_v2}}
where $u$ is an arbitrary (but fixed) neighbor of $\orig$ and $v \notin\{\orig, 2u\}$ is some fixed neighbor of $u$.

\underline{The contribution of $x=\orig$ is $\omtwo$:} As $\{\orig \longleftrightarrow v\}_2$ needs to hold, we get a bound on~\eqref{eq:exp:pitwo_reduced_v2} by $\Omega^2 p_c^3 \tau^{(2)}(v) = \omtwo$.

\underline{The contribution of $|x|=1$ is $\Omega^{-1}+\omtwo$:} We only need to consider $|v-x|=1$, and there are two such choices for $x$. If $x=v-u$, then the contribution is bounded by $\Omega^2p_c^3 \tau^{(2)}(u-x) = \omtwo$.

On the other hand, if $x=u$, both $E'(v,x;\C_1)_2$ and $\{x \in\thinn{\C_1}\}_1$ hold and the contribution to~\eqref{eq:exp:pitwo_reduced_v2} is $\Omega^{-1} + \omtwo$.

\underline{The contribution of $|x|=2$ is $\Omega^{-1}+\omtwo$:} Note that only $|v-x|=2$ may produce relevant contributions. Writing $v=u+z$, we first consider $x=u-z$. Again, $E'(v,x;\C_1)_2 \subseteq \{v \lconn{4} x\}_2$ by Observation~\ref{obs:exp:pivotality}, and so the contribution to~\eqref{eq:exp:pitwo_reduced_v2} is at most $\Omega^2 p_c^2 \tau^{(4)}(v-x) = \omtwo$. Similarly, If $|u-x|=3$, the contribution is at most $\Omega^3 p_c^3 \tau^{(2)}(v-x) \tau^{(3)}(u-x) = \omtwo$.

Let now $y$ be one of the $\Omega-4$ unit vectors satisfying $\dim\gen{u,z,y}=3$. Write $x=u+y$ and set $t=x+z=v+y$. We claim that we only get a relevant contribution if $t\in\omega_2$: As $\{t\notin\omega_2\} \subseteq \{v \lconn{4} x\}_2$ by Observation~\ref{obs:exp:pivotality}, this gives a bound on the contribution to~\eqref{eq:exp:pitwo_reduced_v2} by $\Omega^3 p_c^3 \tau^{(4)}(x-v) = \omtwo$. Under $t\in\omega_2$,~\eqref{eq:exp:pitwo_reduced_v2} becomes
	\aln{ \Omega^3(1-\omone) p_c^3 &\p \Big(\{t\in\omega_2\} \cap \big(\{t \notin\piv{v,x}\}_2 \cup \{t \notin\thinn{\C_1}\}_1 \big) \Big) \\
		 & = \Omega^3(1-\omone) p_c^4 (1-\omone) = \Omega^{-1} + \omtwo. \label{eq:exp:pitwo_v2_x2}}

\underline{The contribution of $|x|=3$ is $\Omega^{-1}+\omtwo$:} We only need to consider terms where $|v-x|=1$. Let $x=v+y$, where $|y|=1$. If $y=z$, then $\{x\in\thinn{\C_1}\} \subseteq \{v \lconn{4} x\}_1$ and the contribution to~\eqref{eq:exp:pitwo_reduced_v2} is $\O(\Omega^{-3})$. For the other $\Omega-2$ choices for $x$, we set $t=u+y$. When $t\notin\omega_2$, we require $\{x\in\thinn{\C_1}\} \subseteq \{v \lconn{4} x\}_1$ and the contribution is $\omtwo$. When $t\in\omega_2$, the contribution is identical to~\eqref{eq:exp:pitwo_v2_x2} and hence $\Omega^{-1} + \omtwo$.
\end{proof}


\providecommand{\bysame}{\leavevmode\hbox to3em{\hrulefill}\thinspace}
\providecommand{\MR}{\relax\ifhmode\unskip\space\fi MR }
\providecommand{\MRhref}[2]{%
  \href{http://www.ams.org/mathscinet-getitem?mr=#1}{#2}
}
\providecommand{\href}[2]{#2}
\begin{thebibliography}{10}

\bibitem{AizenNewma84}
Michal Aizenman and Charles~M. Newman, \emph{Tree graph inequalities and
  critical behavior in percolation models}, J. Statist. Phys. \textbf{36}
  (1984), no.~1-2, 107--143. \MR{MR762034 (86h:82045)}

\bibitem{BolRio06}
B\'ela {Bollob\'as} and Oliver {Riordan}, \emph{{Percolation}}, Cambridge:
  Cambridge University Press, 2006.

\bibitem{FedHofHolHul20}
Lorenzo Federico, Remco Van Der~Hofstad, Frank Den~Hollander, and Tim Hulshof,
  \emph{Expansion of percolation critical points for {H}amming graphs}, Combin.
  Probab. Comput. \textbf{29} (2020), no.~1, 68--100. \MR{4052928}

\bibitem{GauRus78}
D.~S. Gaunt and H.~J. Ruskin, \emph{Bond percolation processes in d
  dimensions}, J. Phys. A \textbf{11} (1978), no.~7, 1369--1380.

\bibitem{GauRusSyk76}
D.~S. Gaunt, M.~F. Sykes, and H.~J. Ruskin, \emph{Percolation processes in
  d-dimensions}, J. Phys. A \textbf{9} (1976), no.~11, 1899--1911.

\bibitem{GerFis74}
Paul~R. Gerber and Michael~E. Fisher, \emph{Critical temperatures of classical
  $n$-vector models on hypercubic lattices}, Phys. Rev. B \textbf{10} (1974),
  4697--4703.

\bibitem{Gra10}
B.~T. {Graham}, \emph{{Borel-type bounds for the self-avoiding walk connective
  constant}}, {J. Phys. A, Math. Theor.} \textbf{43} (2010), no.~23, 13, Id/No
  235001.

\bibitem{Gra03}
Peter Grassberger, \emph{Critical percolation in high dimensions}, Phys. Rev. E
  (3) \textbf{67} (2003), no.~3, 036101, 4. \MR{1976824}

\bibitem{Gri99}
Geoffrey Grimmett, \emph{Percolation}, 2nd ed., Grundlehren der Mathematischen
  Wissenschaften [Fundamental Principles of Mathematical Sciences], vol. 321,
  Springer-Verlag, Berlin, 1999.

\bibitem{GriSta98}
Geoffrey~R. {Grimmett} and Alan~M. {Stacey}, \emph{{Critical probabilities for
  site and bond percolation models}}, {Ann. Probab.} \textbf{26} (1998), no.~4,
  1788--1812.

\bibitem{HarSla90}
Takashi {Hara} and Gordon {Slade}, \emph{{Mean-field critical behaviour for
  percolation in high dimensions}}, {Commun. Math. Phys.} \textbf{128} (1990),
  no.~2, 333--391.

\bibitem{HarSla95}
\bysame, \emph{{The self-avoiding-walk and percolation critical points in high
  dimensions}}, {Comb. Probab. Comput.} \textbf{4} (1995), no.~3, 197--215.

\bibitem{HeyMat20}
Markus Heydenreich and Kilian Matzke, \emph{Critical site percolation in high
  dimension}, J. Stat. Phys. \textbf{181} (2020), no.~3, 816--853. \MR{4160912}

\bibitem{HofSla05}
Remco van~der Hofstad and Gordon Slade, \emph{{Asymptotic expansions in
  \(n^{-1}\) for percolation critical values on the \(n\)-cube and \(\mathbb
  Z^n\)}}, {Random Struct. Algorithms} \textbf{27} (2005), no.~3, 331--357.

\bibitem{HofSla06}
\bysame, \emph{{Expansion in \(n^{-1}\) for percolation critical values on the
  \(n\)-cube and \(\mathbb Z^n\): the first three terms}}, {Comb. Probab.
  Comput.} \textbf{15} (2006), no.~5, 695--713.

\bibitem{MerMoo18}
Stephan {Mertens} and Cristopher {Moore}, \emph{{Series expansion of the
  percolation threshold on hypercubic lattices}}, {J. Phys. A, Math. Theor.}
  \textbf{51} (2018), no.~47, 38, Id/No 475001.

\bibitem{Sok80}
Alan~D. {Sokal}, \emph{{An improvement of Watson's theorem on Borel
  summability}}, {J. Math. Phys.} \textbf{21} (1980), 261--263.

\end{thebibliography}

\providecommand{\bysame}{\leavevmode\hbox to3em{\hrulefill}\thinspace}
\providecommand{\MR}{\relax\ifhmode\unskip\space\fi MR }
\providecommand{\MRhref}[2]{%
  \href{http://www.ams.org/mathscinet-getitem?mr=#1}{#2}
}
\providecommand{\href}[2]{#2}

\end{document}